\newtheorem{ass}{Assumption}
\newtheorem{rmk}{Remark}
\newtheorem*{prb}{Problem}
\newtheorem{dfn}{Definition}
\newtheorem{lmm}{Lemma}
\newtheorem{prp}{Proposition}
\newtheorem{cor}{Corollary}
\newtheorem{thm}{Theorem}
\newcommand{\ints}{\mathbb{Z}}
\newcommand{\R}{\mathbb{R}}
\newcommand{\reals}{\mathbb{R}}
\newcommand{\mc}[1]{\mathcal{#1}}
\newcommand{\bb}[1]{\mathbb{#1}}
\newcommand{\dom}[0]{\mathrm{Dom}~}
\newcommand{\esssup}[0]{\overline{\sup}}
\def\blue{\textcolor{black}}
\newcommand\veps[0]{\varepsilon}
\newcommand\eps[0]{\epsilon}
\newcommand\Kinf[0]{\mathcal{K}_\infty}
\newcommand\KL[0]{\mathcal{KL}}
\newcommand\K[0]{\mathcal{K}}
\renewcommand\L[0]{\mathcal{L}}
\newcommand\limsupk[0]{\underset{k\to\infty}{\overline{\lim}}}
\newcommand\sig[0]{\mathbb{L}}
\newcommand{\mto}[0]{\rightrightarrows}
\newcommand{\nc}{\mathcal{N}}
\newcommand{\proj}{\mathrm{proj}}
\newcommand\id[0]{\mathrm{id}}
\newcommand{\fix}{\mathrm{fix}}
\newcommand{\zer}{\mathrm{zer}}
\begin{document}
\makeatletter
\newcommand{\printfnsymbol}[1]{%
  \textsuperscript{\@fnsymbol{#1}}%
}
\makeatother
\title{Online Feedback Equilibrium Seeking}

\author{Giuseppe Belgioioso\printfnsymbol{1}, Dominic Liao-McPherson\printfnsymbol{1}, Mathias Hudoba de Badyn, \\ Saverio Bolognani, Roy S.~Smith, John Lygeros, and Florian D\"{o}rfler
\thanks{\printfnsymbol{1}These authors contributed equally to this work.
G. Belgioioso, S. Bolognani, R. Smith, J. Lygeros, and F. D\"{o}rfler are with the ETH Z\"{u}rich Automatic Control Laboratory, \texttt{\{gbelgioioso, bsaverio, rsmith, jlygeros, dorfler\}@ethz.ch}. D. Liao-McPherson is with the University of British Columbia, \texttt{dliaomcp@mech.ubc.ca}. 
M.~Hudoba de Badyn is with the University of Oslo, \texttt{mathias.hudoba@its.uio.no}.
This work is supported by the SNSF via NCCR Automation (Grant 180545). 
}
}
\pagestyle{empty}
\maketitle
\thispagestyle{empty}
\begin{abstract}
This paper proposes a unifying design framework for dynamic feedback controllers that track solution trajectories of time-varying generalized equations, such as local minimizers of nonlinear programs or competitive equilibria (e.g., Nash) of non-cooperative games. Inspired by the feedback optimization paradigm, the core idea of the proposed approach is to re-purpose classic iterative algorithms for solving generalized equations (e.g., Josephy--Newton, forward-backward splitting) as dynamic feedback controllers by integrating online measurements of the continuous-time nonlinear plant. Sufficient conditions for closed-loop stability and robustness of the algorithm-plant cyber-physical interconnection are derived in a sampled-data setting by combining and tailoring results from (monotone) operator, fixed-point, and nonlinear systems theory. Numerical simulations on smart building automation and competitive supply-chain management are presented to support the theoretical findings.
\end{abstract}

\section{Introduction}
Online feedback optimization (FO) \cite{hauswirth2021optimization} is an emerging control paradigm for optimal steady-state operation of complex systems based on their direct closed-loop interconnection with optimization algorithms. FO controllers can handle control objectives beyond set-point regulation, typically tracking (a-priori unknown) solution trajectories of time-varying constrained optimization problems. In recent years, FO controllers have been proposed for a wide variety of problem settings
%
\cite{
hauswirth2021optimization, tang2017real, dall2016optimal,
simpson2020stability, chen2020distributed,
wang2011control,
bianchin2021time
}. These can be categorized by the type of control objective (e.g., convex or nonconvex) and constraints (e.g., hard or soft), the dynamics of the plant (e.g., nonlinear, linear, or algebraic), the type of algorithm (discrete or continuous-time), and the stability analysis (e.g., continuous-time, discrete-time, or hybrid), see \cite{hauswirth2021optimization} for a comprehensive list. FO has found widespread application in various domains, including power systems (e.g., for optimal power reserve dispatch \cite{tang2017real, dall2016optimal}, or frequency regulation in AC grids\cite{simpson2020stability, chen2020distributed}), communication networks (e.g., for network congestion control \cite{wang2011control}), and transportation systems (e.g., for ramp metering control \cite{bianchin2021time}).

These large-scale engineering infrastructures comprise multiple subsystems with local decision authority and preferences, commonly known as agents. These agents are typically self-interested, hence, a more general notion of ``efficiency'' is needed to model desirable (i.e., safe, locally optimal, and strategically stable) operating points for such systems, motivating the use of game-theoretic equilibria (e.g., Nash, Wardrop) \cite{belgioioso2022distributed}. A timely example are modern supply-chain systems, whereas suppliers, manufacturers, and retailers compete over the available resources (e.g., raw materials, market demand) to maximize their local profits \cite{spiegler2012control, anderson2010price}.

There are several classes of control approaches for driving non-cooperative multi-agent systems to steady-state configurations given by game-theoretic equilibria. One class uses first-order algorithms and builds on operator-theoretic and passivity arguments \cite{gadjov2018passivity,romano2019dynamic, bianchi2021continuous}. Passivity is leveraged in \cite{gadjov2018passivity} to design a distributed feedback control law to drive agents with single-integrator dynamics to a steady-state operating point given by a Nash equilibrium (NE). This approach is extended to multi-integrator agents affected by partially-known disturbances in \cite{romano2019dynamic}. The case of games with coupling constraints and agents with mixed-order integrator dynamics is considered for the first time in \cite{bianchin2021time}. All the these works consider agents coupled via their objective functions (and constraints) but with decoupled dynamics and use continuous-time flows. 

A second class of methods are sampled-data approaches based on the model-free \textit{extremum seeking} (ES) framework, for finding optima \cite{hazeleger2022sampled}, game-theoretic equilibria \cite{stankovic2011distributed, krilavsevic2021learning}, and solutions to dynamic inclusions (which subsume the two previous cases) \cite{poveda2017framework, poveda2017robust}. In \cite{stankovic2011distributed}, an ES controller is designed for NE seeking in games in which the agents have nonlinear decoupled dynamics. The extension to games with coupling constraints is presented in \cite{krilavsevic2021learning}. In both cases, practical stability is proven in the disturbance-free case. In \cite{poveda2017framework}, a general framework is developed for the design and analysis of a class ES controllers applicable to optimization as well as non-cooperative games. Practical stability is established by relying on the mathematical framework of hybrid dynamic inclusions. To accelerate convergence, \cite{poveda2017robust} extends the previous framework from periodic to event-triggered sampled-data control. These methods require nominal stability of the plant and, typically, time-scale separation assumptions. 

A third class of methods are extensions of economic model predictive control 
to non-cooperative systems, including multi-objective MPC \cite{zavala2012stability} and various flavours of game-theoretic MPC \cite{hall2022receding,van2002moving}. Unlike the first-order or ES methods, these consider transient operation and can handle unstable systems. In exchange, they require dynamic models of the plant and are typically computationally heavier due to the need to find accurate equilibria of trajectory games at each sampling time.

In this paper, we propose feedback equilibrium seeking (FES), an extension of FO that seeks to drive \blue{pre-stabilized} dynamical systems to ``efficient" operating points encoded by time-varying generalized equations (GEs). GEs contain constrained optimization as a special case and can model a broad range of equilibrium problems (e.g., Nash, Wardrop). FES controllers are most similar to the first class of control approaches discussed above \cite{gadjov2018passivity,romano2019dynamic, bianchi2021continuous}. 
They are typically based on first-order methods and require an (approximate) steady-state input-output sensitivity of the plant, \blue{which is assumed to stable}. As a result, they are faster than model-free ES methods; on the other hand, they are slower but computationally lighter than MPC-based methods, and require only static models.
\blue{Therefore, FES controllers occupy a unique middle ground between ES and MPC. Their greatest advantage emerges in situations where steady-state input-output sensitivity information is available, but detailed dynamic models are lacking. Such conditions are commonly found in sectors like power systems and process optimization \cite{ tang2017real, dall2016optimal, simpson2020stability, chen2020distributed}}.
Unlike \cite{gadjov2018passivity,romano2019dynamic, bianchi2021continuous}, our framework encompasses a broad class of algorithms and systems beyond continuous-time flows and multi-integrators. Moreover, we use discrete-time algorithms which is especially relevant for embedded distributed implementations, as communication in multi-agent \blue{engineering} networks is a discrete process. 

Some other recent works \cite{lu2020online, benenati2022optimal} study the problem of tracking time-varying generalized NEs (GNEs) with discrete-time algorithms.
A distributed algorithm for tracking the solution trajectory of an exogenously varying strongly monotone game is developed in \cite{lu2020online} and extended to monotone games in \cite{benenati2022optimal}. In \cite{agarwal2022game}, a GNE-seeking algorithm is adapted for online operation by integrating measurements from a physical system, however, the plant is treated as an algebraic map. All the above works neglect dynamic interactions between the plant and the algorithm (either the variation is exogenous or the plant is algebraic) while we consider closed-loop stability in the sampled-data case. This is significant as digital control of continuous-time plants is currently the dominant paradigm.

Our contribution to this area of research is threefold:
\begin{enumerate}[(i)]
\item We propose a general framework for designing FES controllers for continuous-time \blue{pre-stabilized} nonlinear systems by tapping into a broad class of first- and second-order discrete-time algorithms for generalized equations. This class is broad, and include many optimization and game-theoretic algorithms (e.g., sequential convex programming (SCP), proximal-gradient) as special cases.

\item \blue{We derive sufficient conditions for stability and robustness of the sampled-data algorithm-plant interconnection. Specifically, we prove practical input-to-state stability (Def.~\ref{def:LISS}) of the closed loop with respect to exogenous disturbances, under three fundamental conditions: 1. robust stability of the plant, 2. strong regularity of the control objective, and 3. robust convergence of the iterative algorithm. Moreover, we prove a sharper notion of input-to-state stability if an additional small-gain condition holds.}
%
As by-product of our analysis, we also demonstrate that some popular classes of algorithms are not suitable for constructing sampled-data FES (or FO) controllers.

\item We showcase the utility of our framework by designing two novel controllers based on a SCP algorithm for nonconvex nonsmooth optimization, and a forward-backward splitting controller for distributed Nash equilibrium seeking in dynamically-coupled games. Further, we illustrate their effectiveness through numerical simulations on smart building and supply chain examples.
\end{enumerate}

We build on our preliminary work \cite{9683614} which addresses the special case of strongly monotone GEs, globally linearly convergent algorithms, and exponentially stable plants. Our results here provide a significant extension that allows non-monotone GEs, locally-stable nonlinear plants and non-linearly locally convergent algorithms. This algorithmic extension is particularly important since it captures many practically relevant cases, such as algorithms for nonconvex optimization, which are not globally convergent, and primal-dual and distributed algorithms which typically exhibit sub-linear convergence only.



\begin{figure}[t]
    \centering
    \includegraphics[width=.9\columnwidth]{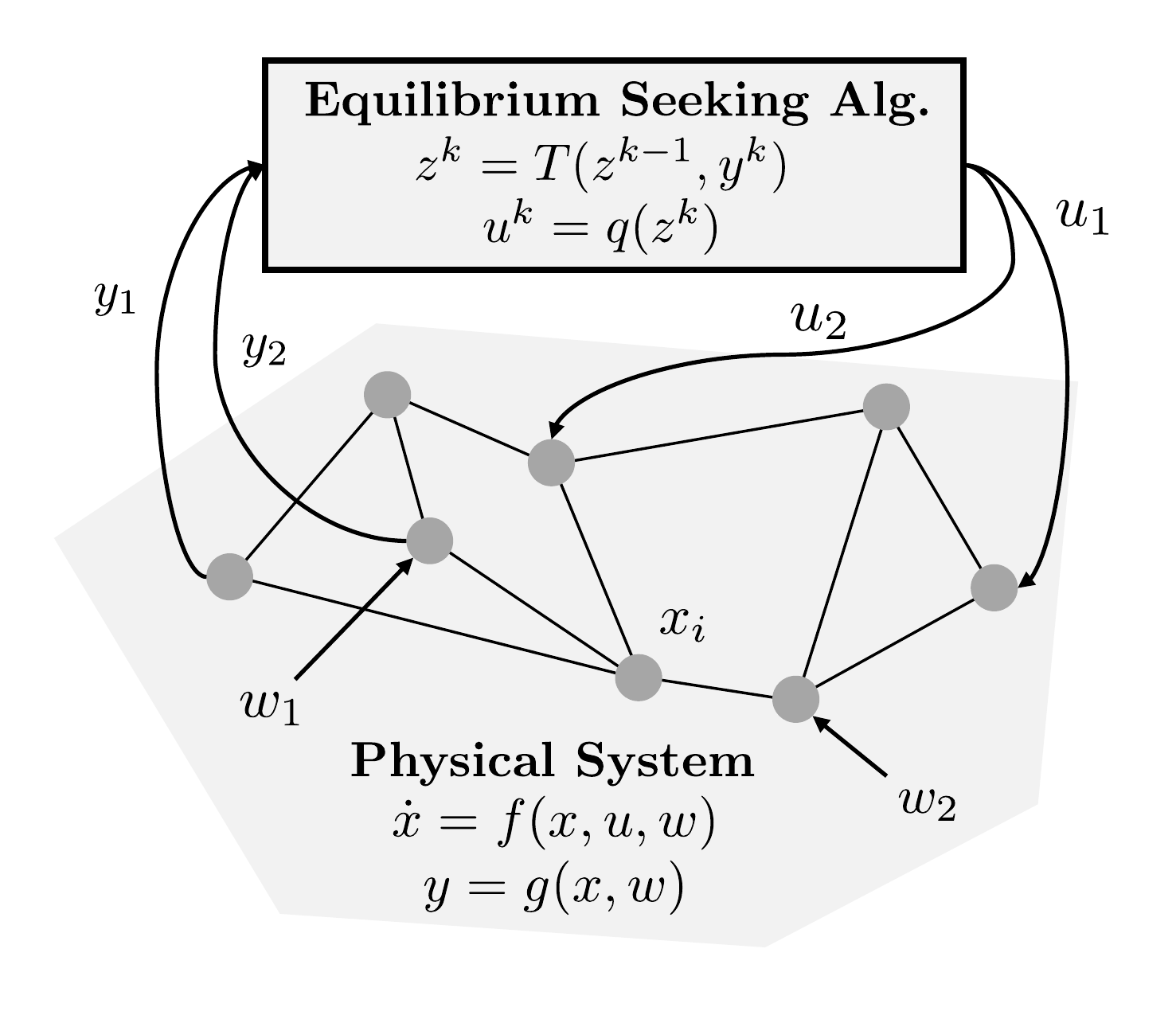}
    \caption{In feedback equilibrium seeking, measurements from a countinuous-time dynamical system are incorporated into a discrete-time equilibrium seeking algorithm resulting in a coupled sampled-data cyber-physical system.}
    \label{fig:block-diagram}
\end{figure}

\section{Preliminaries}
\label{sec:prel}
\textit{Basic notation:} Denote by $I$ and $\id	$ the identity matrix and operator, respectively; by $\overline{\lim}$ and $\esssup$ the limit and essential suprema, respectively. Given $P\!=\!P^\top \succ 0$ and $x\in \reals^n$, $|x|_P := \sqrt{x^\top P x}$ with the convention $|x| \!= \!|x|_I$. 

\textit{Systems Theory:} Continuous-time signals are denoted by $\mathbb{L}^n = \{f: \reals_{\geq0} \to \reals^n\}$ and sequences by $\ell^n = \{f: \ints_{\geq 0} \to \reals^n\}$. Given $x\in \sig^n$ and $y\in \ell^n$, we consider the signal/sequence norms $\|x\| = \esssup_{t\geq 0} |x(t)|$ and $\|y\| = \sup_{k\geq 0} |y_k|$. A function $\gamma:\reals_{\geq 0} \rightarrow \reals_{\geq 0} $ is of class $\mc K$ if it is continuous, strictly increasing, and satisfies $\gamma(0)=0$. If it is also unbounded, then $\gamma \in \Kinf$. Similarly, a function $\sigma: \reals_{\geq0} \to \reals_{\geq0}$ is said to be of class $\L$ if it is continuous, strictly decreasing, and satisfies $\sigma(s) \to 0$ as $s\to \infty$. A function $\beta:\reals_{\geq 0} \times \reals_{\geq 0} \to \reals_{\geq 0}$ is of class $\KL$ if $\beta(\cdot,s) \in \K$ for each fixed $s\geq 0$ and $\beta(r,\cdot)\in \L$ for fixed $r$. Class $\mathcal K$ functions obey a weak triangle inequality\cite[Lemma 10]{kellett2014compendium}:
\begin{equation} \label{eq:weak_triangle_ineq}
	\forall a,b\geq 0 \text{ and }\alpha \in \K,~~\alpha(a+b) \leq \alpha(2a) + \alpha(2b).
\end{equation}
For $\gamma_1,\gamma_2 : \reals \to \reals$, $\gamma_1 \circ\gamma_2$ denotes the composition. Given $\xi\in \reals^n$, $u\in \sig^m$ and $f:\reals^n \times \reals^m \to \reals^n$, $x\in \sig^n$ is a solution of the initial value problem
\begin{equation} \label{eq:ctime_intro}
	\dot x(t) = f(x(t),u(t)), \quad x(0) = \xi.
\end{equation}
If it is uniformly continuous, obeys the initial condition and satisfies the differential equation almost everywhere.

\begin{dfn}[LISpS \cite{sontag1996new}] \label{def:LISS}
Consider a system
\begin{equation} \label{eq:iss_sys}
	x'(t) = f(x(t),u(t)),~~ x(0) = \xi
\end{equation}
with solution $x(t,\xi,u)$ (where either $x'(t) = \dot x(t)$ and $t\in \reals_{\geq0}$ or $x'(t) = x(t+1)$ and $t\in \ints_{\geq0}$) and let $\bar x$ be a reference signal. The system \eqref{eq:iss_sys} is said to be Locally Input-to-State Practically Stable (LISpS) about $\bar x$ with respect to $u$ if there exists $\eps> 0$, $\beta \in \KL$, $\gamma \in \K$, and $b> 0$ such that,
\begin{equation*}
	\forall t \geq 0, \quad |x(t,\xi,u) - \bar x(t)| \leq \beta(|\xi - \bar x(0)|,t) + \gamma(\|u\|) + b,
\end{equation*}
provided $|\xi| \leq \eps$ and $\|u\| \leq \eps$. If $b = 0$ then \eqref{eq:iss_sys} is Locally Input-to-State Stable (LISS) about $\bar x$ with respect to $u$.
\end{dfn}

\smallskip
\textit{Operator Theory \cite{bauschke2011convex}:} Given a closed convex set $\Omega \subseteq \reals^n$, $\iota_{\Omega}:\reals^n \to \{0,\infty\}$ denotes its indicator function, $\nc_{ \Omega}:\Omega \mto \reals^n$ is its normal cone operator, and $\proj_{\Omega}:\reals^n \to \Omega$ is the Euclidean projection onto $\Omega$. A set-valued mapping $\mc F:\reals^n \mto \reals^n$ is $\mu$-strongly monotone, if $(u-v)^\top (x-y) \geq \mu \left\| x-y \right\|^2$ for all $x\! \neq \!y \in \reals^n$, $u \in \mc F (x)$, $v \in \mc F (y)$, and monotone if $\mu\!=\! 0$; $\fix(\mc F)=\{x \in \mathbb{R}^n \,|\, x \in \mc F(x)\}$ and $\zer(\mc F) = \{x \in \mathbb{R}^n \,|\, 0 \in \mc F(x)\}$ denote the set of fixed points and of zeros of $\mc F$. For a convex function $f:\reals^n \to \reals$, $\partial f:\reals^n \mto \reals^n$ denotes the subdifferential mapping in the sense of convex analysis.

\begin{dfn}[Strong Regularity\cite{robinson1980strongly}]
\label{def:SR}
A set-valued mapping $\Psi:\reals^n \rightrightarrows \reals^n$ is said to be strongly regular at $(x,y)$ if and only if $y \in \Psi(x)$ and there exist neighborhoods $U$ of $x$ and $V$ of $y$ such that the truncated inverse mapping $\tilde{\Psi}^{-1}: V\mapsto \Psi^{-1}(V)\cap U$ is a Lipschitz continuous function on $V$.
\end{dfn}


\section{Problem Setting}
\label{sec:ProbSett}
\blue{We consider the problem of efficiently operating a physical plant described by the following nonlinear state-space system}%
\begin{subequations} \label{eq:ctime-dynamics}
\begin{align}
\dot{x}(t) &= f(x(t),u(t),w(t)),\\
y(t) &= g(x(t),w(t))
\end{align}
\end{subequations}
where $x\in \sig^{n_x}$ is the state, $y\in \sig^{n_y}$ is the output, $u\in \sig^{n_u} $ is the control input, with $u(t) \in \mc U$ for all $t \in \mathbb{R}_{\geq 0}$, and $w\in \sig^{n_w}$ is a disturbance satisfying $w(t) \in \mc{W}$ for all $t \in \mathbb{R}_{\geq 0}$.

We adopt the ``stabilize-then-optimize'' paradigm, and assume that \eqref{eq:ctime-dynamics} is stable\footnote{\blue{If the physical plant \eqref{eq:ctime-dynamics} is not stable, it can be pre-stabilized by replacing $f(x,u,w)$ with $f(x,k(x,v),w)$, where $v$ is the new input (for example, a reference command) and $k$ is a stabilizing controller.}} and has a steady-state map $p: \mc{U}\times\mc{W}\to \reals^{n_x}$ satisfying $f(p(u,w),u,w) = 0$ for all $u \in \mc{U}, w\in \mc{W}$ and a steady-state input-output map
\begin{equation} \label{eq:IOSS}
	h(u,w) = g(p(u,w),w).
\end{equation}

Formally, we assume the system \eqref{eq:ctime-dynamics} satisfies the following properties that ensure its solution trajectories and steady-state mappings are well defined.
\begin{ass}[Robust Stability of the Plant \eqref{eq:ctime-dynamics} \blue{with respect to parameter variations}] \label{ass:system_properties}
(i) $f$ is locally Lipschitz; (ii) $g$ is globally $L_g$-Lipschitz; (iii) $w$ is continuously differentiable, and $\dot{w}\in \sig^{n_w}$ satisfies $\|\dot{w}\| < \infty$; (iv) $\mc{W}$ and $\mc{U}$ are compact and convex; (v) \eqref{eq:ctime-dynamics} is LISS \cite{sontag1996new}, namely, there exists $\eps_x,\eps_w, \alpha_3,\alpha_4,\alpha_5 > 0$, a continuously differentiable function $V$, and $\sigma_1 \in \K$ such that for any constant $u \in \mc{U}$
\begin{gather*}
	\alpha_3 |x - p(u,w)|^2 \leq V(x,u,w) \leq \alpha_4 |x - p(u,w)|^2,\\
	\dot{V}(x(t),u,w(t)) \leq -\alpha_5 V(x(t),u,w(t)) + \sigma_1(|\dot{w}(t)|),
\end{gather*}
if $V(x(t),u,w(t)) \leq \epsilon_x$ and $|\dot{w}(t)| \leq \epsilon_w$. {\hfill $\square$}
\end{ass}

Our control objective is to design an output feedback controller that drives \eqref{eq:ctime-dynamics} and maintain it near efficient operating conditions. We encode ``efficiency'' using the following structured generalized equation (GE), parameterized by $w \in \mc W$:%
\begin{subequations} \label{eq:GE}
\begin{align}
\label{eq:GE_1}
& \quad 0 \in  G(z,s) + \mc{A}(z) & \text{(efficiency objective)} \\
\label{eq:GE_2}
& \quad s = h(u,w), &\text{(steady-state map)} 
\\
\label{eq:GE_3}
&\quad u = q(z),  &(\text{ctrl state to input map}) 
\end{align}
\end{subequations}
where $z\in \reals^{n_z}$ is an auxiliary variable, $s$ is the steady-state output of \eqref{eq:ctime-dynamics}, $G:\reals^{n_z} \times \reals^{n_y} \rightarrow \reals^{n_z}$ is a single-valued mapping\footnote{For notational simplicity $G$ does not depend on $w$, this is without loss of generality since $w$ can be included in $s$.}, $\mc{A}:\reals^{n_z} \mto \reals^{n_z}$ is a set-valued mapping, and $q:\reals^{n_z} \to \mc{U}$ is the output mapping.
\blue{The notion of efficiency encoded in the GE \eqref{eq:GE} is flexible and encompasses a wide variety of useful objectives, notably constrained optimization (e.g., minimizing energy consumption as described in \ref{sec:SM}) and generalized Nash equilibria.} The auxiliary variable $z$ is the internal state of the controller, and it also allows the modelling of optimality and equilibrium conditions that involve dual variables, such as KKT critical points. Two concrete examples are provided at the end of this section.

\smallskip
Our control objective is to maintain the system \eqref{eq:ctime-dynamics} near solutions of \eqref{eq:GE}; we must impose some regularity conditions to ensure that this is a well-defined problem. Substituting \eqref{eq:GE_2} and \eqref{eq:GE_3} into \eqref{eq:GE_1} yields the following compact GE:
\begin{equation} \label{eq:GE_compact}
	 0 \in\mathbb{G}(z,w) + \mc{A}(z), 
\end{equation}
where $\mathbb{G}(z,w) = {G}(z,h(q(z),w))$.
The parameter-to-solution mapping $S: \mc W \rightarrow \bb R^{n_z}$ is defined as
\begin{equation}
\label{eq:solMapping}
	S(w) = \{z \in \mathbb R^{n_z}~|~  0 \in\mathbb{G}(z,w) + \mc{A}(z)\}.
\end{equation}
For a given $w \in \mathbb L^{n_w}$, the set of solution trajectories is
\begin{equation}
	\mc S(w) = \{z \in \sig^{n_z}~|~\forall t \geq 0,~~z(t) \in S(w(t))\}.
\end{equation}
The following assumption ensures that tracking solution trajectories in $\mc S$ is a well-posed problem.
\begin{ass}[Strong Regularity of the GE] \label{ass:strong_reg}
(i) $q$ is globally $L_q$-Lipschitz continuous, (ii) $G$ is continuously differentiable, and (iii) the mapping $\mathbb{G}(\cdot,w) + \mc{A}(\cdot)$ in \eqref{eq:GE_compact} is strongly regular at all points satisfying $z \in S(w)$, for all $w\in \mc{W}$.
{\hfill $\square$}
\end{ass}
\blue{Conditions for strong regularity are context-dependent, as detailed later in this section with two examples. In optimization contexts, it reduces to having a strong local minimizer and unique dual variables, which follows by second-order sufficient conditions and suitable constraint qualifications.
}

In general, the set $\mc S(w)$ can be complex and multi-valued, thankfully Assumption~\ref{ass:strong_reg} imposes some structure on $\mc S(w)$.

\begin{thm} \label{thm:sol_traj}\cite[Theorem 3.2]{dontchev2013euler} Under Assumption~ \ref{ass:strong_reg}, for any $w \in \mathbb{L}^{n_w}$, there exist $m$ Lipschitz continuous mappings $\bar z_i \in \sig^{n_z}$, $i \in \{1,\ldots,m\}$, such that $\mc S(w) = \{\bar z_1,\ldots,\bar z_m\}$.
{\hfill $\square$}
\end{thm}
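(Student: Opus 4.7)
The statement is essentially a parametric implicit function theorem for generalized equations, so the natural strategy is to invoke Robinson's classical result and then globalize in time. The plan is to proceed in three steps: (i) obtain local single-valued Lipschitz branches for the solution map $S(\cdot)$ via strong regularity, (ii) use compactness of $\mc{W}$ to show that the total number of such branches over $\mc{W}$ is finite, and (iii) compose these branches with the time signal $w(t)$ to produce the claimed global Lipschitz trajectories.

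For step (i), fix any $w_0\in\mc{W}$ and any $z_0\in S(w_0)$. Assumption~\ref{ass:strong_reg} guarantees that $\mathbb{G}(\cdot,w_0) + \mc{A}(\cdot)$ is strongly regular at $z_0$, and the continuous differentiability of $G$ together with the global Lipschitz continuity of $q$ (and of $h$, which inherits Lipschitz regularity from $g$ and standard properties of the steady-state map in Assumption~\ref{ass:system_properties}) makes $\mathbb{G}$ jointly Lipschitz in $(z,w)$. Robinson's implicit function theorem for generalized equations (e.g., \cite[Thm.~3.2]{dontchev2013euler} or the classical Robinson 1980 statement corresponding to Definition~\ref{def:SR}) then yields a neighborhood $W_{z_0}$ of $w_0$ and a Lipschitz function $\phi_{z_0}:W_{z_0}\to\reals^{n_z}$ with $\phi_{z_0}(w_0)=z_0$, $\phi_{z_0}(w)\in S(w)$ for every $w\in W_{z_0}$, and $\phi_{z_0}(w)$ uniquely characterizing the branch of $S(\cdot)$ through $z_0$ in a neighborhood of $z_0$.

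For step (ii), strong regularity implies that the solutions in $S(w_0)$ are locally isolated, so for each $w_0$ the set $S(w_0)$ is discrete. Combining this with the compactness of $\mc W$ in Assumption~\ref{ass:system_properties}(iv) and a standard covering argument (if $S(w)$ were infinite on a sequence $w_n\to w^*$ in $\mc W$, accumulation of isolated solutions would contradict local uniqueness from strong regularity at $w^*$), one concludes that the total number $m$ of branches of $S(\cdot)$ over $\mc W$ is finite. Relabel these branches as $\phi_1,\dots,\phi_m$, each defined and Lipschitz on an open subset of $\mc W$. A standard monodromy/continuation argument, using uniqueness of the local branch through any point, ensures that along the continuous trajectory $w(t)$ the branches may be consistently extended for all $t\geq 0$.

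For step (iii), define $\bar z_i(t) := \phi_i(w(t))$ for $i\in\{1,\dots,m\}$. Since $w$ is continuously differentiable with $\|\dot w\|<\infty$ (Assumption~\ref{ass:system_properties}(iii)), $w$ is globally Lipschitz in $t$; as composition of Lipschitz mappings, each $\bar z_i$ is Lipschitz continuous, $\bar z_i\in\sig^{n_z}$, and $\mc S(w)=\{\bar z_1,\dots,\bar z_m\}$ by construction. The main obstacle in the argument is not the local parameterization (which is a direct application of Robinson's theorem) but rather the global-in-time extension and the finiteness of $m$: these require carefully combining the strong regularity hypothesis with compactness of $\mc W$ to rule out branch collisions, branch accumulation, and bifurcations that could invalidate a single-valued continuation along $w(t)$.
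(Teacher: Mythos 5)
First, a point of reference: the paper does not prove this statement at all --- Theorem~\ref{thm:sol_traj} is imported verbatim from \cite[Theorem 3.2]{dontchev2013euler}, so there is no internal proof to compare against. Your three-step plan (Robinson's strong-regularity implicit function theorem for local single-valued Lipschitz branches, a compactness argument for finiteness, and composition with $w(t)$ to obtain the trajectories) is the standard argument underlying that reference, and steps (i) and (iii) are essentially sound, modulo the regularity of $h$ in $w$, which the paper glosses over in exactly the same way.

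The genuine gap is in step (ii). Strong regularity makes every point of $S(w)$ isolated, so $S(w)$ is discrete; but discreteness together with compactness of $\mc W$ does not yield finiteness of the number of branches. An unbounded discrete set can be infinite, and your accumulation argument (``accumulation of isolated solutions would contradict local uniqueness'') has nothing to accumulate unless the solutions are confined to a bounded set. What is actually needed --- and what appears as an explicit standing hypothesis in \cite[Theorem 3.2]{dontchev2013euler} --- is boundedness (hence, granting closedness of $\gph \mc A$, compactness) of the solution graph $\{(w,z)\,:\, w\in \mc W,\ z\in S(w)\}$. This is not a consequence of Assumptions~\ref{ass:system_properties}--\ref{ass:strong_reg}: in the KKT instantiations \eqref{eq:NLP_KKT} and \eqref{eq:GEvGNE} the variable $z$ contains dual multipliers that are not a priori confined to a compact set. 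Once compactness of the graph is granted, your covering argument does work, and it additionally supplies the uniform lower bound on the radii of the Robinson neighborhoods that your ``monodromy/continuation'' step silently relies on to extend each branch along all of $w([0,\infty))$ without the local parameterizations degenerating. So the skeleton is right, but both the finiteness of $m$ and the global-in-time continuation hinge on a compactness hypothesis on the solution set that your argument (and, to be fair, the paper's own statement of the theorem) leaves implicit.
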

\begin{rmk}\label{rmk:branches}
\blue{Theorem~\ref{thm:sol_traj} demonstrates that the solution map consists of $m$ isolated w-parameterized trajectories called ``branches''. These branches generalize the notion of a isolated local solution of a GE (in the context of optimization, a strict local minimizer) to a parameterized setting. Our analysis will eventually establish tracking error bounds and the existence of a region of attraction around each branch.}
\end{rmk}

\noindent We can now formally state our control problem.
\begin{prb} \label{prb:control_problem}
Design an output feedback controller so that the system \eqref{eq:ctime-dynamics} tracks the input and output trajectories\footnote{With an abuse of notation, we extend the mappings $h$ and $q$ to take signals as arguments, in the sense of $u^*(t)=q(z^*(t))$ for all $t \in \mathbb{R}_{\geq 0}$.} $u^* = q(z^*)$ and $y^*=h(u^*,w)$, for some $z^* \in \mc S(w)$ and $w \in \mathbb{L}^{n_w}$.
\end{prb}

The GE \eqref{eq:GE} allows us to naturally express complex control objectives, e.g., constraints can be readily encoded into $\mc{A}$ using normal cone operators (see Example 1.A). Similarly to FO \cite{hauswirth2021optimization}, we assume that the dynamic model of the plant \eqref{eq:ctime-dynamics} is unknown and that the exogenous disturbance $w$ is not measurable. However, the output $y$ can be measured and the input-output steady-state sensitivity $\nabla_u h(u,w)$, or an approximation of it, is available. Robustness of FO controllers to static input-output modelling error is analytically studied in \cite{colombino2019towards} and experimentally in \cite{ortmann2020experimental}.
The inability to measure $w$ is common in practical applications. For example, in power systems, $w$ represents variable micro-generation and uncontrollable loads caused by consumers drawing power from the grid.


\subsection*{\textbf{Example 1.A.} Nonlinear Programming (NLP)} 
%
Consider the $w$-parameterized nonlinear program (NLP)
\begin{subequations} \label{eq:NLP}
\begin{align} 
    \min_{\xi,u}   &\quad  \phi(\xi,u) + \varphi(\xi) \\
    \text{s.t.} & \quad  \xi  =  h(u,w),~~ u \in \mc{U}
\end{align}
\end{subequations}
where \textcolor{black}{$\phi:\reals^{n_y} \times \reals^{n_u}\to\reals$} and $h$ are twice continuously differentiable, and $\varphi:\reals^{n_y} \to \reals\cup \{\infty\}$ is a proper lower semicontinuous convex function \cite[Def.~9.12]{bauschke2011convex}, e.g., an $\ell_1$-norm. We assume that \eqref{eq:NLP} is always feasible, namely,
\begin{equation*}
	\{(\xi,u)~|~\xi\in \dom \varphi, u \in \mc{U}, \xi = h(u,w)\} \neq \varnothing, \quad
	\forall w\in \mc{W}.
\end{equation*}
The associated partial Lagrangian function is
\begin{equation} \label{eq:NLP-lagrangian}
L(\xi,u,\lambda,w) = \phi(\xi,u) +\lambda^{\top}(h(u,w) - \xi),
\end{equation}
where $\lambda\in \reals_{\geq 0}^{n_y}$ is a dual variable, and the KKTs for \eqref{eq:NLP} are
\begin{equation} \label{eq:NLP_KKT0}
\begin{cases}
0 \in	\nabla_\xi L(\xi,u,\lambda,w) +\partial \varphi(\xi)\\
0 \in \nabla_u L(\xi,u,\lambda,w) + \mc{N}_\mc{U}(u)\\
0 =	\nabla_\lambda L(\xi,u,\lambda,w) = h(u,w) - \xi
\end{cases}
\end{equation}
This is a special case of the GE in \eqref{eq:GE}, with $z=(\xi,u,\lambda)$,
\begin{equation} \label{eq:NLP_KKT}
0 \in	\underbrace{
\begin{bmatrix}
	\nabla_\xi L(\xi,u,\lambda,w)\\
	\nabla_u L(\xi,u,\lambda,w)\\
	s - \xi
	\end{bmatrix}
	}_{G(z,s)}
+
\underbrace{
\begin{bmatrix}
\partial \varphi(\xi)\\
\mc{N}_\mc{U}(u)\\
0
\end{bmatrix}
}_{\mc A(z)},
\end{equation}
$s=h(u,w)$ and $q(z) = [0 ~~ I ~~ 0] z$.
The following statement provides sufficient conditions for strong regularity of \eqref{eq:NLP_KKT}.
\begin{lmm} \label{lmm:NLP_strong_reg}
Consider any $w\in \mc{W}$, a KKT point $\bar z = (\bar \xi, \bar u,\bar \lambda) \in S(w)$, and let $\zeta = (\xi,u)$. Then, the condensed GE \eqref{eq:GE_compact} associated with \eqref{eq:NLP_KKT} is strongly regular at $\bar z$ if $\zeta^\top \nabla_\zeta^2 L(\bar z,w) \zeta > 0$ for all $\zeta=(\xi,u)$ satisfying $\xi = \nabla_u h(\bar u, w) u$.
\end{lmm}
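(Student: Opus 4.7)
The plan is to invoke Robinson's strong-regularity linearization principle \cite{robinson1980strongly}: since $\mathbb{G}(\cdot,w)$ is $C^1$, strong regularity of $\mathbb{G}(\cdot,w) + \mc{A}(\cdot)$ at $\bar z$ is equivalent to strong regularity at $\bar z$ of the partial linearization
\begin{equation*}
    L_{\bar z}(z) := \mathbb{G}(\bar z,w) + \nabla_z \mathbb{G}(\bar z,w)(z - \bar z) + \mc{A}(z).
\end{equation*}
Since $\bar z \in S(w)$, it suffices to show that, for all sufficiently small right-hand-side perturbations $\delta$, the linear GE $\delta \in L_{\bar z}(z)$ admits a unique, Lipschitz-continuous localization of solutions around $\bar z$.

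Writing $\Delta z = z - \bar z = (\Delta\xi, \Delta u, \Delta\lambda)$ and exploiting the block structure of \eqref{eq:NLP_KKT}, the $(\xi,u)$-Hessian block of $\nabla_z \mathbb{G}(\bar z,w)$ coincides with $\nabla_\zeta^2 L(\bar z,w)$, while $\Delta\lambda$ enters affinely through the linearized equality constraint $\Delta\xi = \nabla_u h(\bar u,w)\Delta u$. Its Jacobian $[-I \ \ \nabla_u h(\bar u,w)]$ has full row rank, so LICQ holds automatically for this equality-only setting. One may therefore eliminate $\Delta\xi$ via the third block and recover $\Delta\lambda$ from the first block (modulo $\partial\varphi$), reducing the problem to a GE in $\Delta u$ alone whose single-valued part is the Schur complement
\begin{equation*}
    \begin{bmatrix} \nabla_u h(\bar u, w)^\top & I \end{bmatrix} \nabla_\zeta^2 L(\bar z, w) \begin{bmatrix} \nabla_u h(\bar u, w) \\ I \end{bmatrix},
\end{equation*}
i.e., $\nabla_\zeta^2 L(\bar z,w)$ restricted to the tangent subspace $\{(\xi,u) : \xi = \nabla_u h(\bar u, w) u\}$ appearing in the hypothesis.

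The lemma's assumption asserts that this Schur complement is positive definite, so the reduced linear GE --- augmented by the maximal monotone mapping $\mc{N}_{\mc{U}}$ and the composed monotone term arising from $\partial\varphi$ --- is strongly monotone. Standard results on strongly monotone variational inequalities then yield existence, uniqueness, and Lipschitz dependence of $\Delta u$ on the perturbation; unwinding the elimination gives the corresponding regularity of $\Delta\xi$ and $\Delta\lambda$. Robinson's theorem lifts this conclusion to strong regularity of $\mathbb{G}(\cdot,w) + \mc{A}(\cdot)$ at $\bar z$.

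The main obstacle is that the Schur-type elimination is not pure linear algebra: it must be carried out through the graphs of the set-valued monotone operators $\partial\varphi$ and $\mc{N}_{\mc{U}}$, and the Lipschitz continuity of the dual-recovery step must be verified in this nonsmooth setting. The cleanest rigorous route is to bypass an explicit elimination and invoke established characterizations of strong regularity for smooth-plus-convex composite NLPs (e.g., via the implicit-function framework of Dontchev--Rockafellar), which identify precisely the LICQ-plus-SSOSC combination supplied by the hypothesis as sufficient.
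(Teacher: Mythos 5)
Your overall strategy --- Robinson's linearization principle combined with the observation that the hypothesis is exactly a reduced-Hessian (SSOSC-type) condition on the tangent space of the linearized equality $\xi = \nabla_u h(\bar u,w)\,u$, with LICQ automatic because the Jacobian $[\,I \ \ -\nabla_u h(\bar u,w)\,]$ of that equality always has full row rank --- is the right one and is, in spirit, the same route the paper takes. The paper itself gives no argument at all: its entire proof is the sentence ``identical to that of \cite[Theorem 7]{liao2020time}'', so your sketch actually supplies more structural content than the printed proof; in particular your quadratic form $\bigl[\nabla_u h(\bar u,w)^\top \ \ I\bigr]\nabla_\zeta^2 L(\bar z,w)\bigl[\begin{smallmatrix}\nabla_u h(\bar u,w)\\ I\end{smallmatrix}\bigr]$ matches the paper's own footnote characterization $Z^\top\nabla_\zeta^2 L(\bar z,w)Z\succ 0$.

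There is, however, a concrete hole in the portion of the argument you do attempt, and you only half-acknowledge it. The primal side is fine: positive definiteness of the reduced Hessian together with the maximal monotone terms gives a strongly monotone GE in $\Delta u$, hence a single-valued Lipschitz localization for $\Delta u$ and then $\Delta\xi$. But ``unwinding the elimination gives the corresponding regularity of $\Delta\lambda$'' is not justified. From the first block of \eqref{eq:NLP_KKT}, $\lambda$ must satisfy $\lambda - \nabla_\xi\phi(\xi,u)\in\partial\varphi(\xi)$, which for multivalued $\partial\varphi$ (e.g.\ $\varphi=\|\cdot\|_1$ at a kink of $\bar\xi$) confines $\lambda$ to a set rather than determining it; the second block constrains $\lambda$ only through $\nabla_u h(\bar u,w)^\top\lambda$, and $\nabla_u h^\top$ need not be injective. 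Single-valuedness and Lipschitz continuity of the multiplier component of the localization --- which strong regularity of the full system in $z=(\xi,u,\lambda)$ requires --- therefore does not follow from strong monotonicity of the primal reduction alone. That missing step is precisely the content of the result the paper cites (and of the Dontchev--Rockafellar implicit-function machinery you gesture at); since your proof defers to ``established characterizations'' without identifying a statement that actually covers the composite term $\partial\varphi$ and delivers the dual Lipschitz estimate, the argument as written is not closed.
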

\begin{proof}
The proof is identical to that of \cite[Theorem 7]{liao2020time}.
\end{proof}

The condition in Lemma~\ref{lmm:NLP_strong_reg} is known as a second order sufficient condition and can be checked numerically\footnote{\blue{Specifically, if $Z$ is a basis for the nullspace of $[I~~-\nabla_u h(\bar u,w)]$ then the regularity condition is equivalent to $Z^T \nabla_\zeta^2 L(\bar z,w) Z \succ 0$ which can be checked using e.g., a Cholesky factorization.}} if $\bar z$ is known. Conditions of this type are standard in the analysis of Newton's method and sequential quadratic programming, see e.g., \cite[Theorem 1.13, Theorem 4.14]{izmailov2014newton}.
{\hfill $\blacktriangle$}

\subsection*{\textbf{Example 2.A.} Generalized Nash Equilibrium Problems}
Consider a set of $N$ agents labelled by $i \in \mc I \!:=\! \{1,\ldots,N \}$. Each agent $i \in \mc I$ chooses a control action $u_i$ from a polyhedral set $\mc U_i := \{\xi \in \reals^{n_i}~|~B_i \xi \leq b_i\}$, and has a cost function $J_i(u_i,s)$ that depends on its own action, $u_i$, and the actions of other agents, $u_{-i}=(u_1,\ldots,u_{i-1},u_{i+1},\ldots,u_N)$, through the steady state of the physical system $s=h(u,w)$, where $u=(u_1,\ldots,u_N)$. Moreover, the actions of all agents are linked via affine coupling constraints of the form $Au \leq b$, modelling limited availability of shared resources \cite{belgioioso2022distributed}. 
The agents are self-interested and want to optimize their own local cost. The resulting collection of the $N$ parameterized inter-dependent optimization problems constitutes a parametrized game:
\begin{align} \label{eq:Game}
\forall i \in \mc I: \ \left\{
\begin{array}{r l} \displaystyle
  \min_{u_i }   & J_i(u_i,h(u,w))=: \mathbb{J}_i(u_i,u_{-i},w)  \\
  \text{s.t.}              & A u \leq b , \ u_i \in \, \mc U_i 
\end{array}
\right.
\end{align} 
From a game-theoretic perspective, a relevant solution concept for \eqref{eq:Game} is the generalized Nash equilibrium, where no agent can unilaterally reduce its cost \cite{facchinei2010generalized}.

\begin{dfn}\label{def:GNE}
A feasible action profile $\bar u = (\bar u_1, \ldots, \bar u_N)$ is a generalized Nash equilibrium (GNE) of the game \eqref{eq:Game} if
\begin{align*}
\mathbb  J_i(\bar u_i,\bar u_{-i},w) \leq  
\min_{u_i \in \mc{U}_i}\left\{
\mathbb J_i(u_i,\bar u_{-i},w) ~|~A(u_i,\bar u_{-i}) \leq b 
\right\},
\end{align*}
holds for all agents $i \in \mc I$.
{\hfill $\square$}
\end{dfn}

If each $\mathbb J_i$ is continuously differentiable and convex with respect to $u_i$, a GNE can be found by solving a system of coupled KKT conditions of the problems \eqref{eq:Game} \cite[Th.~4.8]{facchinei2010generalized}:
\begin{align} 
\label{eq:GNEge}
\begin{cases}
0 \in \nabla_{u_i} \mathbb J_i( u_i, u_{-i},w)   + A_i^\top \lambda + \mathcal{N}_{\mathcal{U}_i}(u_i), \; \forall i \in \mathcal{I}\\
0 \in  b - Au + \mathcal{N}_{\mathbb{R}^m_{\geq 0}}(\lambda),
\end{cases}
\end{align}
where $\lambda \in \mathbb R^m$ is a dual variable associated with the coupling constraints $Au -b \leq 0$. The set of solutions to \eqref{eq:GNEge} is a special subclass of GNEs, known as \textit{variational} GNEs (v-GNEs) \cite{facchinei2010generalized}.

To write \eqref{eq:GNEge} in a compact form, we define the pseudo-gradient $ 
\mathbb F(u,w) = F(u,h(u,w)) = [\nabla_{u_i} J_i(u_i,h(u,w))]_{i \in \mathcal I}$ obtained by stacking up the partial gradients of the local cost functions $J_i$, i.e., $\nabla_{u_i} J_i(u_i,h(u,w)) =: F_i(u,h(u,w))$. The KKT system \eqref{eq:GNEge} can then be cast as a special case of the original GE \eqref{eq:GE} with $z = (u, \lambda)$, $q(z)= [I~~0] z = u$, and
\begin{equation} \label{eq:GEvGNE}
0 \in \underbrace{
\begin{bmatrix}
F(u,s)\\b \end{bmatrix}}_{ G(z,s)}  + 
\underbrace{\begin{bmatrix}
0 & A^\top \\ -A & 0
\end{bmatrix}\begin{bmatrix}
u\\
\lambda
\end{bmatrix} + 
\begin{bmatrix}
\mathcal{N}_{\mc U}(u)
\\
\mc N_{\R_{\geq 0}^m}(\lambda)
\end{bmatrix}
}_{\mc A(z)},
\end{equation}
To ensure the problem is well posed, we assume that the game primitives satisfy the following technical conditions:
\begin{enumerate}[(C1)]
\item  $\mathbb F$ is continuously differentiable and, $\forall w \in \mc W$, $\mathbb  F(\,\cdot\, ,w)$ is $\tilde \mu$-strongly monotone and $\tilde \ell$-Lipschitz continuous;
\item For all $ u \in \mc U$, $F(u, \,\cdot\,)$ is $\ell$-Lipschitz continuous;
\item For all $(\bar u,w)$ and $w\in \mc{W}$ satisfying \eqref{eq:GNEge}, it holds that $[\tilde A_i]_{i\in \mc{E}(\bar u)}$ has full row rank, where $\tilde A = \begin{bmatrix}
		A^{\top} & B_1^{\top}&\ldots & B_N^{\top}
	\end{bmatrix}^{\top}$, $\mc{E}(\bar u) = \{i\in \bb{N}~|~ \tilde A_i \bar u = \tilde b\}$ is the set of active constraints at $\bar u$, $\tilde b = (b,b_1,\ldots,b_N)$, \blue{and $\tilde {A}_i$ is the $i$th row of $\tilde A$.}
\end{enumerate}
\begin{lmm}
\label{lem:gameReg}
Under (C1)--(C3), the condensed GE \eqref{eq:GE_compact} associated with \eqref{eq:GEvGNE} is strongly regular at $(\bar z,w)$, with $\bar z \in S(w)$.
\end{lmm}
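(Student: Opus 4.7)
The plan is to view the condensed GE associated with \eqref{eq:GEvGNE} as the KKT mapping of a parametric variational inequality (VI) over the polyhedral set $\mc{C} = \{u \in \mc{U} \mid Au \leq b\}$ with single-valued operator $\mathbb{F}(\cdot,w)$, and then to invoke Robinson's strong regularity theorem for such KKT systems \cite{robinson1980strongly}. First, I would substitute $q(z) = u$ and $s = h(u,w)$ into \eqref{eq:GEvGNE} to rewrite \eqref{eq:GE_compact} as the KKT system of finding $u \in \mc{C}$ such that $\langle \mathbb{F}(u,w), v - u\rangle \geq 0$ for all $v \in \mc{C}$, with $\lambda$ playing the role of the multiplier for the coupling constraint $Au \leq b$ and the normal cones $\mc{N}_{\mc{U}_i}$ implicitly carrying the multipliers for the local constraints $B_i u_i \leq b_i$.

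Next, by Robinson's theorem, and its refinements for VI-KKT systems (see, e.g., \cite[Thm.~1.13, Thm.~4.14]{izmailov2014newton}), I would reduce strong regularity at $\bar z = (\bar u, \bar \lambda) \in S(w)$ to verifying two sufficient conditions: (a) a strong second-order sufficient condition (SSOSC), namely $d^\top \nabla_u \mathbb{F}(\bar u,w) d > 0$ for all nonzero $d$ in the critical cone at $\bar u$; and (b) the linear independence constraint qualification (LICQ) at $\bar u$. Condition (a) follows immediately from (C1): the $\tilde\mu$-strong monotonicity and differentiability of $\mathbb{F}(\cdot,w)$ yield $\langle \nabla_u \mathbb{F}(\bar u,w) d, d\rangle \geq \tilde\mu |d|^2$ for \emph{every} $d$, in particular over the critical cone, so the SSOSC is automatic. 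Condition (b) is precisely the content of (C3): the rows of $\tilde A$ that are active at $\bar u$ are linearly independent, which is LICQ for the polyhedron $\{u : Au \leq b,\; B_i u_i \leq b_i\}$. Smooth $w$-dependence of $\mathbb{F}$, required so that Robinson's linearization is well-posed, is furnished by (C1)--(C2) together with the smoothness of $h$ inherited from Assumption~\ref{ass:system_properties}.

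The main obstacle I anticipate is cleanly matching the hypotheses of an off-the-shelf strong regularity theorem to the mixed-constraint structure of \eqref{eq:GEvGNE}, where the local constraints $B_i u_i \leq b_i$ are encoded via $\mc{N}_{\mc{U}_i}$ while the coupling constraints $Au \leq b$ are handled with an explicit dual variable $\lambda$. I would resolve this in one of two equivalent ways: either (i) absorb the local constraints into the VI feasible set $\mc{C}$ so that the remaining KKT system has the standard single-block form covered by the theorem, or (ii) lift all constraints to explicit multipliers (introducing local duals $\mu_i$ alongside $\lambda$) and apply the classical KKT-strong-regularity equivalence to the resulting block system, then project out $\mu_i$. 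Either route produces the desired conclusion, provided (C1)--(C3) are used only to supply the SSOSC and LICQ as described above.
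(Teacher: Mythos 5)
Your proposal is correct and follows essentially the same route as the paper: reduce strong regularity to LICQ plus a strong second-order condition via the standard KKT strong-regularity results (the paper cites \cite[Prop.~1.27, 1.28]{izmailov2014newton}), obtain the SSOSC for free from the $\tilde\mu$-strong monotonicity in (C1) together with linearity of the constraints, and read off LICQ from (C3). Your resolution (ii) of the mixed-constraint structure --- lifting all constraints (local and coupling) to explicit multipliers, applying the theorem to the fully dualized auxiliary GE, and then recovering the original GE by selecting the subset of dual variables corresponding to the coupling constraints --- is precisely the argument the paper gives.
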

\begin{proof}
See Appendix~\ref{ap:Lemma_gameReg}.
\end{proof}
Condition (C3) is standard in optimization and is known as the Linear Independence Constraint Qualification (LICQ). {\hfill $\blacktriangle$}

\section{Control Strategy}
\label{sec:control-strat}
Our objective is to maintain the system \eqref{eq:ctime-dynamics} near efficient operating points, namely, the solution trajectories $s^*=h(u^*,w)$ of the GE in \eqref{eq:GE}. Since \eqref{eq:ctime-dynamics} is pre-stabilized, selecting $u(t) = u^*(t)$ for all $t\geq 0$ would cause \eqref{eq:ctime-dynamics} to approximately track the desired steady-state $s^*(t)$. However, computing $u^*(t)$ requires full knowledge of $w(t)$ and evaluating the solution mapping $S(w(t))$ which may be impossible and/or impractical. Instead, we approach the problem by modifying an iterative algorithm for solving the GE \eqref{eq:GE} with the following form
\begin{subequations} \label{eq:algo}
\begin{align}
	s^k &= h(q(z^k),w), \label{eq:algo1} \\
	z^{k+1} &= T(z^k,s^k),
	 \label{eq:algo2}
\end{align}
\end{subequations}
where $T:\reals^{n_z}\times \reals^{n_y}\to \reals^{n_z}$ is the \textit{algorithm}\footnote{For notational simplicity $T$ does not depend on $w$ directly, this is without loss of generality since $w$ can be included in $s$.}, i.e., a rule for generating the next iterate. This class of algorithms is abstract and broad, including e.g. projected-gradient, SCP, and \blue{best-response dynamics in strictly convex games}. Concrete examples are given are the end of this section and in \cite{9683614}.

By substituting \eqref{eq:algo1} in \eqref{eq:algo2}, we can compactly cast the algorithmic update rule \eqref{eq:algo} via the condensed parameterized mapping $\mathbb{T}(\cdot,w):\mathbb{R}^{n_z}\rightarrow \mathbb{R}^{n_z}$, defined as
\begin{equation} \label{eq:T_condensed}
	\mathbb{T}(z,w) := T(z ,h(q(z),w)).
\end{equation}
The following ensures that the nominal iteration \eqref{eq:algo} is locally convergent and well-behaved in a parameterized setting. 
\begin{ass}[Robust Convergence of the Algorithm]
\hspace*{1em}
 \label{ass:algo}
\begin{enumerate}[(i)]
\item For all $ w\in \mc{W}$, $z =\mathbb{T}(z,w)$ if and only if $z \in S(w)$;
\item There exist a continuous function $W:\reals^{n_z}\times \mc W \to\reals$, a constant $\eps > 0$, $\alpha_1,\alpha_2 \in \Kinf$, and $\alpha \in \K$ such that for all $w \in \mc W$, $\bar z \in S(w)$, and $z \in \{\xi:\, |\xi-\bar z | \leq \epsilon \}$
\begin{subequations}
\begin{gather}
	\alpha_1(|z- \bar z|)\leq W(z,w) \leq \alpha_2(|z-\bar z|) \label{eq:Wbounds}\\
	W(\mathbb{T}(z,w),w) \leq W(z,w) - \alpha(|z - \bar z|); \label{eq:Wdot}
\end{gather}
\end{subequations}
\item For all fixed $z$, there exists $L_T > 0$ such that 
\begin{equation*}
	|T(z,s) - T(z,s')| \leq L_T |y-y'|, \quad \forall 
	s,s'\in \reals^{n_y}.~\square
\end{equation*}
\end{enumerate}
\end{ass}
The function $W$ in Assumption~\ref{ass:algo} is commonly known as \textit{``merit function"}, and serves as a Lyapunov function for the algorithm dynamics. Typical choices include objective functions and weighted distances to the solution set $S(w)$.

If $w(t)$ were fully measurable and the steady-state input-output mapping $h$ in \eqref{eq:IOSS} perfectly known, then $u^*(t)$ could be computed using \eqref{eq:algo}. Instead, we construct an output feedback controller by replacing the steady-state input-output model $s^k$ in \eqref{eq:algo1} with online measurements $y^k$ obtained from the physical system \eqref{eq:ctime-dynamics}. This creates an \textit{``online" feedback equilibrium seeking} process, where the system is directly integrated into the algorithm, as illustrated in Figure~\ref{fig:block-diagram}. The incorporation of feedback into the algorithm provides a degree of robustness. \blue{In this paper, we formally establish robustness solely against the unmeasured disturbances $w$. Evidence of robustness against other factors, including modelling errors, has been shown in \cite{colombino2019towards} analytically, and in \cite{ortmann2020experimental} empirically.}

Since $\eqref{eq:algo}$ is discrete, we adopt a sampled-data strategy with a zero-order hold. Let $\tau > 0$ denote the sampling period and let $t^k = k\tau$ be the sampling instants. Interconnecting \eqref{eq:ctime-dynamics} with \eqref{eq:algo} yields the following sampled-data closed-loop system:
\begin{subequations} \label{eq:sampled-data-system}
\begin{align} 
\Sigma_1^s:&\begin{cases} \label{eq:hybrid1}
    ~\dot x(t) = f(x(t),u(t),w(t)),\\
    ~y(t) = g(x(t),w(t)).
\end{cases}\\
\Sigma_2^s: &\begin{cases} \label{eq:hybrid2}
~z^{k} = T(z^{k-1},y(t^{k})),\\
    ~u(t) = q(z^k),~~\forall t\in [t^k,t^{k+1}).
\end{cases}
\end{align}
\end{subequations}

Before proceeding to a closed-loop stability analysis, we provide some concrete examples of admissible algorithms.

\subsection*{\textbf{Example 1.B.} The Josephy-Newton Method}
To solve the GE in \eqref{eq:GE_compact}, the Josephy-Newton (JN) method \cite{josephy1979newton} relies on the implicitly defined iterations
\begin{equation} \label{eq:JN_method}
	H(z) (z^+ - z) +\mathbb{G} (z,w) + \mc{A}(z^+) \ni 0,
\end{equation}
where $H:\reals^{n_z} \to \reals^{n_z \times n_z}$ is an invertible approximation of the Jacobian $\nabla_z\mathbb{G}$. The corresponding update rule is
\begin{equation} \label{eq:JN_FES_operator}
	T(z,s) = (H(z) + \mc A)^{-1}(H(z) z - G(z,s)).
\end{equation}

If the mapping $H$ is chosen judiciously, the nominal JN method  satisfies Assumption~\ref{ass:algo}, as formalized next.

\begin{lmm} \label{lmm:JN_method}
Given Assumption~\ref{ass:strong_reg}, let $\bar z\in S$ and suppose $\exists \bar \epsilon > 0$ such that for all $w\in \mc{W}$ and $z$ such that $|z-\bar z(w)| \leq \bar \epsilon$
\begin{enumerate}[(i)]
	\item $\exists \tilde \delta > 0$ such that $|H(z) - \nabla_z \mathbb{G}(\bar z(w),w)| \leq \tilde \delta$;
	\item The map $(H(z) (\cdot) + \mc{A}(\cdot))^{-1}$ is $M$-Lipschitz continuous.
\end{enumerate}
Then, if $M\tilde \delta < 1$ the JN operator satisfies Assumption~\ref{ass:algo} with $W(z,w) = |z - \bar z(w)|$, $\alpha_1 = \alpha_2 = \id$, and $\alpha = (1-\tilde \eta) \id$.
\end{lmm}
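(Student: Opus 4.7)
My plan is to verify each of the three conditions of Assumption~\ref{ass:algo} for the condensed JN iteration $\mathbb{T}(z,w) = (H(z)+\mc{A})^{-1}(H(z)z - \mathbb{G}(z,w))$ obtained by substituting \eqref{eq:JN_FES_operator} into \eqref{eq:T_condensed}. Condition (i) is immediate: setting $z^+ = z$ in the implicit update \eqref{eq:JN_method} collapses it to $0 \in \mathbb{G}(z,w)+\mc{A}(z)$, which is exactly the defining inclusion of $S(w)$ in \eqref{eq:solMapping}. Condition (iii) follows from the $M$-Lipschitz continuity of the inverse in hypothesis (ii): for fixed $z$, $T(z,s) - T(z,s')$ is the difference of the same inverse evaluated at arguments that differ only by $G(z,s) - G(z,s')$, and continuous differentiability of $G$ (Assumption~\ref{ass:strong_reg}) bounds the latter by $L_G(z)\, |s-s'|$, yielding $L_T = M L_G(z)$.

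The core of the proof is establishing condition (ii) with the merit function $W(z,w) = |z-\bar z(w)|$, which trivially satisfies \eqref{eq:Wbounds} with $\alpha_1=\alpha_2=\id$; all that remains is a contraction estimate $|\mathbb{T}(z,w) - \bar z(w)| \leq \tilde \eta\, |z - \bar z(w)|$ with $\tilde \eta<1$, which delivers \eqref{eq:Wdot} with $\alpha = (1-\tilde \eta)\id$. The key trick is to recognise $\bar z = \bar z(w)$ as a fixed point of the \emph{same} resolvent used to generate $z^+$: since $\bar z \in S(w)$ means $-\mathbb{G}(\bar z, w) \in \mc{A}(\bar z)$, adding $H(z)\bar z$ to both sides gives $H(z)\bar z - \mathbb{G}(\bar z, w) \in (H(z) + \mc{A})(\bar z)$, and single-valuedness of the inverse (implied by its Lipschitz continuity) forces
\[
\bar z = (H(z) + \mc{A})^{-1}\bigl(H(z)\bar z - \mathbb{G}(\bar z, w)\bigr).
\]
Subtracting this identity from $z^+ = \mathbb{T}(z,w)$ and applying hypothesis (ii) yields
\[
|z^+ - \bar z| \leq M\, \bigl|H(z)(z-\bar z) - \bigl(\mathbb{G}(z,w) - \mathbb{G}(\bar z, w)\bigr)\bigr|.
\]

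I then decompose the residual by inserting $\pm \nabla_z \mathbb{G}(\bar z, w)(z - \bar z)$: the first piece $[H(z) - \nabla_z \mathbb{G}(\bar z, w)](z - \bar z)$ is bounded by $\tilde \delta\, |z - \bar z|$ via hypothesis (i), while the second piece is the first-order Taylor remainder of $\mathbb{G}(\cdot,w)$ at $\bar z$, which is $o(|z-\bar z|)$ because $\mathbb{G}$ is continuously differentiable in $z$ (as the composition of the $C^1$ map $G$ with the smooth steady-state mapping whose sensitivity $\nabla_u h$ is assumed available). Hence $|z^+ - \bar z| \leq (M\tilde \delta + o(1))\,|z - \bar z|$, and the strict bound $M\tilde \delta < 1$ lets us shrink $\bar \epsilon$ until the combined rate stays below some $\tilde \eta \in (M\tilde \delta,\, 1)$, completing the contraction estimate.

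The delicate point I expect to be the main obstacle is making the Taylor remainder bound uniform in $w \in \mc{W}$: the $o(|z-\bar z(w)|)$ term must go to zero at a rate that does not depend on which branch or which parameter we track, since a single neighborhood radius $\bar \epsilon$ and rate $\tilde \eta$ must serve for all $w$. Compactness of $\mc{W}$, continuity of $\nabla_z \mathbb{G}$, and the Lipschitz continuity of each branch $\bar z(\cdot)$ guaranteed by Theorem~\ref{thm:sol_traj} should together supply a uniform modulus of continuity for $\nabla_z \mathbb{G}$ along the branch and thus the common $\bar \epsilon$ and $\tilde \eta$ needed for Assumption~\ref{ass:algo} to hold as stated.
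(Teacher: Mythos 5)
Your proof is correct and follows essentially the same route as the paper: the paper simply outsources the key local contraction estimate $|\mathbb{T}(z,w)-\bar z(w)|\leq \tilde\eta |z-\bar z(w)|$ to \cite[Theorem 10]{liao2020time}, and the argument you write out --- the resolvent fixed-point identity for $\bar z$, the $M$-Lipschitz bound on the inverse, and the split of the residual into the Jacobian-approximation error (bounded by $\tilde\delta|z-\bar z|$) plus the Taylor remainder ($o(|z-\bar z|)$, made uniform over $\mc W$ by compactness) --- is precisely the standard derivation that the cited theorem encapsulates. Your treatment of conditions (i) and (iii) likewise matches the paper's one-line justifications, with your version of (iii) being slightly more careful in tracking the Lipschitz constant of $G$ in $s$ rather than asserting it equals $1$.
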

\begin{proof}
See Appendix~\ref{app:JN_method}.
\end{proof}



When the GE represents the KKT conditions of an NLP, the JN method is equivalent to sequential quadratic programming \cite[\S~4.2]{izmailov2014newton}, \cite[\S~6C]{dontchev2009implicit}. Specialized to the KKT conditions \eqref{eq:NLP_KKT} of the NLP \eqref{eq:NLP}, evaluating $T$ is equivalent to letting
\begin{equation}
	T(z,s) = (\xi + d_\xi^*, u + d_u^*, \lambda^*),\
\end{equation}
where $(d_\xi^*, d_u^*,\lambda^*)$ is a primal-dual stationary point of the following optimization problem
{ \small
\begin{subequations} \label{eq:JN_CP}
\begin{align}
\min_{d_\xi,d_u}&~\frac12\left[\begin{smallmatrix} 
d_\xi \\ d_u
\end{smallmatrix}\right]^\top \left(\left[\begin{smallmatrix}
	Q & S^\top \\
	S & R
\end{smallmatrix}\right] \left[ \begin{smallmatrix} 
d_\xi \\ d_u
\end{smallmatrix}\right] + \left[\begin{smallmatrix}
	\nabla_\xi\phi(z)\\ 
	\nabla_u \phi(z)
\end{smallmatrix}\right] \right) + \varphi(\xi\!+\!d_\xi)  \\
\mathrm{s.t.}&~ d_\xi = \nabla_u h(u,w) d_u + s - \xi \label{eq:duals_JN_qp}\\
&~ u + d_u \in \mc{U},
\end{align}
\end{subequations}
}%
The matrices $Q$, $S$, and $R$ are positive semidefinte approximations of $\nabla_\xi^2 L(z,w), \nabla_{u\xi}^2 L(z,w),$ and $\nabla_{u}^2 L(z,w)$, respectively, where $L$ is the Lagrangian function in \eqref{eq:NLP-lagrangian}. {\hfill $\blacktriangle$}
\subsection*{\textbf{Example 2.B.} Semi-decentralized GNE seeking}
Consider the non-cooperative game example in Section~\ref{sec:ProbSett}. A GNE can be computed by solving the GE in \eqref{eq:GEvGNE} using the forward-backward operator splitting (FBS) algorithm
\begin{align} \label{eq:pFB}
T(z,s) = {(\id+\Phi^{-1} \mc A)}^{-1} (z - \Phi^{-1}  G(z,s)),
\end{align}
where $\Phi $ is a preconditioning matrix opportunely designed to distribute the computation among the agents \cite{belgioioso2018projected}, i.e.,
\begin{align}
\label{eq:Phi}
\Phi :=
\begin{bmatrix}
\textrm{diag}(\gamma_1^{-1},\ldots,\gamma_N^{-1}) & -A^\top\\
-A & \gamma_c^{-1} I
\end{bmatrix},
\end{align}
whose diagonal entries are the step sizes of the algorithm.

With this choice, the update \eqref{eq:pFB} can be performed in a semi-decentralized way as 
$T(z,s) = (u^+,\lambda^+)$
with
\begin{align*}
u_i^+ &= \proj_{\mc U_i}\left(
u_i - \gamma_i( F_i(u_i,s) + A_i^\top \lambda)  \right), \quad \forall i \in \mc I\\
\lambda^+ &= \proj_{\mathbb R^m_{\geq 0}}\left(  \lambda + \gamma_\text{c} (A(2u^+-u) -b) 
\right).
\end{align*}

Typically, the local action updates $u_i^+$ are performed in parallel by the agents, whereas the dual update $\lambda^+$, associated with the coupling constraints\footnote{
Typically, the matrix $A \in \mathbb{R}^{m \times n}$ is either sparse or ``fat", i.e., $m \ll n$.
} $Au-b \leq 0$, is managed by a central coordinator able to gather the local strategies $u_i^+$'s and broadcast incentive signals (e.g., prices, dual variables) to all the agents\footnote{Note that fully-distributed GNE seeking algorithms are also available that rely only on local communications among neighbouring agents \cite{bianchi2022fast}.} \cite{belgioioso2021semi}, as illustrated in Figure~\ref{fig:FBScontr}.

\smallskip
In the remainder of this example, we derive conditions under which the update rule in \eqref{eq:pFB} satisfies Assumption~\ref{ass:algo}, and thus it is a suitable controller choice for the FES scheme \eqref{eq:sampled-data-system}.
In the offline static case, namely, when $s=h(z,w)$, the convergence analysis of \eqref{eq:pFB} is based on the theory of \textit{averaged} operators \cite[\S~4.5]{bauschke2011convex}, which are a special subclass of \textit{strongly nonexpansive operators} \cite[Prop.~4.35]{bauschke2011convex}.

\begin{dfn}
\label{def:SQNE}
Let $\mathcal D \subseteq \mathbb R^n$ and $ R: \mathcal D \rightarrow \mathbb R^n$. Then, $R$ is \textit{strongly quasi-nonexpansive} (SQNE) if there exist a norm $|\cdot|_P$ and a constant $\rho >0$ such that
\begin{equation} \label{eq:sQNE}
	|R(z) - \bar z|_P^2 \leq |z - \bar z|_P^2 - \rho |R(z) - z|_P^2.
\end{equation}
for all $z \in \mathcal{D}$, for all $\bar z \in \mathrm{fix}\,R$.
{\hfill $\square$}
\end{dfn}

The key result is presented in the next statement, where we show that any parameterized continuous SQNE operator with a unique fixed point admits a merit function as in Assumption~\ref{ass:algo}. 
\begin{prp}
\label{lmm:SQNE}
Let $\mathbb{T}(\cdot,w)$ be SQNE and continuous, and $\mathrm{fix}~\mathbb{T}(\cdot,w)=\{\bar z(w)\}$, for all $ w \in \mc W$. Then, there exist $P\succ 0$, $\alpha \in \mathcal K$, and $\epsilon>0$ such that for all $ z \in \{\xi:\, |\xi-\bar z | \leq \epsilon \}$
\begin{align}
\label{eq:sQNE-merit-2}
{|\mathbb{T}(z,w) - \bar z(w)|}_P^2 &\leq {|z-\bar z(w)|}_P^2 - \alpha(|z-\bar z(w)|).
\end{align}
\end{prp}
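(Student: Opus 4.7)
The plan is to leverage the SQNE inequality of Definition~\ref{def:SQNE} and reduce the claim to a residual lower bound. Applying Definition~\ref{def:SQNE} with $R=\mathbb{T}(\cdot,w)$ and $\bar z=\bar z(w)$ gives
\begin{equation*}
|\mathbb{T}(z,w)-\bar z(w)|_P^2 \leq |z-\bar z(w)|_P^2 - \rho\,|\mathbb{T}(z,w)-z|_P^2,
\end{equation*}
so it suffices to find $\epsilon>0$ and $\alpha\in\K$ such that $\rho\,|\mathbb{T}(z,w)-z|_P^2 \geq \alpha(|z-\bar z(w)|)$ on the ball $|z-\bar z(w)|\leq\epsilon$; the matrix $P$ is inherited directly from the SQNE hypothesis.

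To construct such a lower bound, I would fix $\epsilon>0$ and introduce the residual function
\begin{equation*}
m(r,w) := \inf\bigl\{|\mathbb{T}(z,w)-z|_P^2 \,:\, r\leq|z-\bar z(w)|\leq\epsilon\bigr\}.
\end{equation*}
Because $\bar z(w)$ is the \emph{unique} fixed point and $z\mapsto|\mathbb{T}(z,w)-z|_P^2$ is continuous and strictly positive off $\bar z(w)$, compactness of the annulus yields $m(r,w)>0$ whenever $r>0$. Furthermore $m(\cdot,w)$ is non-decreasing (by set-inclusion of the annuli) and continuous (by Hausdorff-continuity of the annuli in $r$ together with continuity of the integrand). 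Setting $m(r):=\inf_{w\in\mc W}m(r,w)$ produces a candidate uniform bound, and an honest class-$\K$ function is then obtained by a standard regularisation, e.g.\ $\alpha(r) := \rho\,(r/\epsilon)\,m(r/2)$, which is continuous, strictly increasing on $[0,\epsilon]$, vanishes at $0$, and satisfies $\alpha(r)\leq\rho\,m(r)$. Substituting the pointwise bound $|\mathbb{T}(z,w)-z|_P^2\geq m(|z-\bar z(w)|)$ back into the SQNE inequality closes the proof.

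The main obstacle is the uniformity across $w\in\mc W$: positivity of $m(r)$ requires that $w\mapsto\bar z(w)$ vary continuously and that $(z,w)\mapsto|\mathbb{T}(z,w)-z|_P^2$ be jointly continuous, neither of which is explicit in the hypothesis of the proposition. In the paper's setting both follow from Assumption~\ref{ass:strong_reg} (which supplies local Lipschitz continuity of the solution branches) together with Lipschitzness of $T$ in $s$ from Assumption~\ref{ass:algo}(iii), applied via the chain $\mathbb{T}(z,w)=T(z,h(q(z),w))$. If such regularity were unavailable, the conclusion would have to be read pointwise in $w$, with possibly $w$-dependent $\alpha$ and $\epsilon$, but the residual-bound strategy itself would be unchanged.
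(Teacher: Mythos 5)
Your proposal is correct and follows essentially the same route as the paper: both reduce the claim via the SQNE inequality to finding a class-$\K$ lower bound on the fixed-point residual $|\mathbb{T}(z,w)-z|$ in terms of $|z-\bar z(w)|$, using positive definiteness (from uniqueness of the fixed point), continuity, and compactness. The only difference is that the paper outsources your explicit annulus-infimum construction to \cite[Lemma 4.3]{khalil2002nonlinear} and handles the uniformity over the compact set $\mc W$ (the obstacle you correctly flag) by citing \cite[Lemma~1]{picallo2021cross}.
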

\begin{proof}
See Appendix~\ref{ap:SQNE}.
\end{proof}

\begin{figure}[t]
    \centering
    \includegraphics[width=\columnwidth]{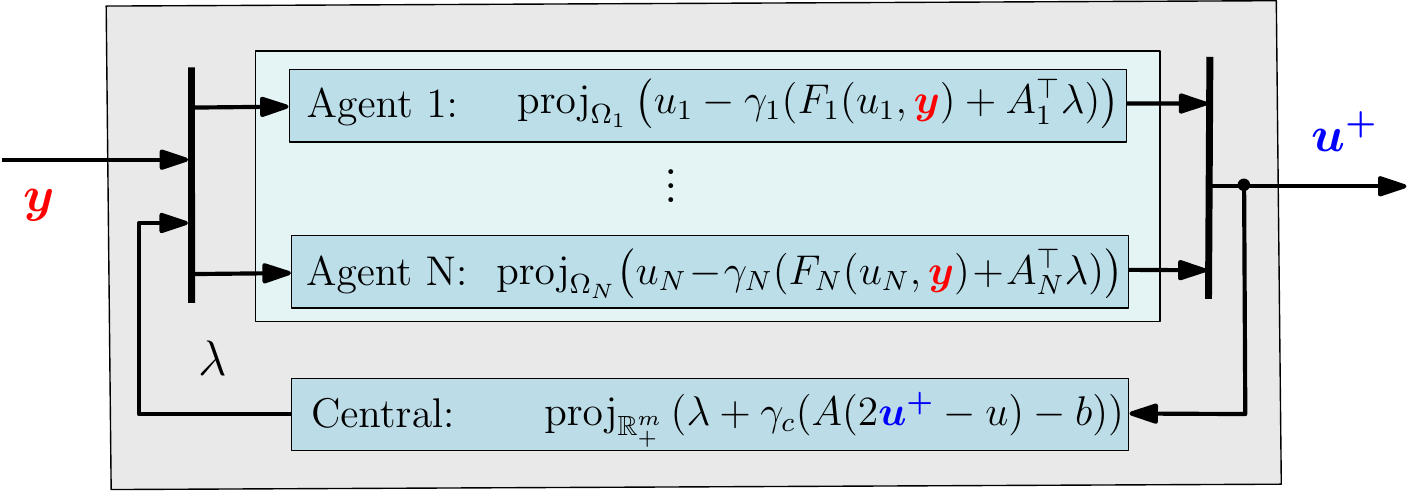}
    \caption{FBS controller. The GNE seeking algorithm \eqref{eq:pFB} as a dynamic (semi) decentralized feedback controller with tunable gains $\gamma_i$'s and state $z = (u,\lambda)$.}
    \label{fig:FBScontr}
\end{figure}

Using Proposition~\ref{lmm:SQNE} we can show that algorithm \eqref{eq:pFB} satisfies Assumption~\ref{ass:algo} and is thus a viable FES controller.
\begin{lmm}
\label{lem:FBS} 
Consider the generalized game in Section~\ref{sec:ProbSett} and the GNE seeking algorithm \eqref{eq:pFB}.
Assume that (C1)--(C3) hold and set the step sizes/gains in \eqref{eq:Phi} such that
$\gamma_i \leq  (\|A_i\|+\delta)^{-1}$, for all $i \in \mc I$, and
$\gamma_c \leq (\sum_{i \in \mc I} \|A_i\|+ \delta)^{-1}$, with $\delta  >  \tilde{\ell}^2/(2 \tilde \mu)$.
Then, the algorithm \eqref{eq:pFB}  satisfies Assumption \ref{ass:algo}.
\end{lmm}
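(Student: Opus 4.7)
The plan is to invoke Proposition~\ref{lmm:SQNE} on the condensed iteration $\mathbb{T}(\cdot,w) = T(\cdot, h(q(\cdot),w))$ defined in \eqref{eq:T_condensed}. This requires verifying four properties: (a) $\Phi \succ 0$ so that the $\Phi$-norm is well-defined; (b) $\mathbb{T}(\cdot,w)$ is SQNE in that norm; (c) $\mathrm{fix}~\mathbb{T}(\cdot,w) = S(w)$ is a singleton; and (d) $\mathbb{T}(\cdot,w)$ is continuous. Separately, I would verify part~(iii) of Assumption~\ref{ass:algo}, since Proposition~\ref{lmm:SQNE} does not address it.

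For (a), a block-Gershgorin argument on the structure in \eqref{eq:Phi}, applied together with the step-size bounds $\gamma_i \leq (\|A_i\|+\delta)^{-1}$ and $\gamma_c \leq (\sum_{i\in\mc I}\|A_i\|+\delta)^{-1}$, yields $\Phi \succeq \delta I$. For (b), I would identify the forward/backward splitting used by \eqref{eq:pFB}: the forward map $\Phi^{-1}G(\cdot, h(q(\cdot),w))$ carries the pseudo-gradient plus the constant term $b$, while the backward map $\Phi^{-1}\mc A$ is the sum of the skew-symmetric linear operator and the normal cones of $\mc U$ and $\R^m_{\geq 0}$. Because $\mc A$ is maximally monotone, the resolvent $(\id + \Phi^{-1}\mc A)^{-1}$ is firmly $\Phi$-nonexpansive. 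The forward step $\id - \Phi^{-1}G(\cdot,h(q(\cdot),w))$ is $\Phi$-averaged provided the composition of $F$ with the closed-loop map $h(q(\cdot),w)$ inherits strong monotonicity and Lipschitzness from (C1); applying Young's inequality to the cross terms produced by the preconditioning shows that the choice $\delta > \tilde\ell^2/(2\tilde\mu)$ makes the relevant quadratic form positive. The composition of a $\Phi$-averaged map and a firmly $\Phi$-nonexpansive map is $\Phi$-averaged, hence SQNE in the sense of Definition~\ref{def:SQNE}.

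For (c), strong monotonicity under (C1) guarantees a unique v-GNE, while the LICQ condition (C3) ensures uniqueness of the associated dual variable $\lambda$ (cf.\ Lemma~\ref{lem:gameReg}); thus $\mathrm{fix}~\mathbb{T}(\cdot,w) = \{\bar z(w)\}$. Property (d) is immediate, since $\mathbb{T}(\cdot,w)$ is the composition of the continuous maps $F$, $h$, $q$ with the (Lipschitz) resolvent. Applying Proposition~\ref{lmm:SQNE} then produces a merit function $W(z,w) = |z-\bar z(w)|_P^2$ satisfying \eqref{eq:Wbounds}--\eqref{eq:Wdot}, which establishes parts~(i) and~(ii) of Assumption~\ref{ass:algo}. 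For part~(iii), note that $s \mapsto T(z,s)$ is the composition of the $\Phi$-nonexpansive resolvent with the affine map $s \mapsto z - \Phi^{-1}[F(u,s)^\top,\, b^\top]^\top$; by (C2), $F(u,\cdot)$ is $\ell$-Lipschitz, so $T(z,\cdot)$ is globally Lipschitz with a constant $L_T$ depending on $\ell$ and the spectral data of $\Phi$.

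The main obstacle I anticipate is step (b): establishing $\Phi$-averagedness of the forward step, as opposed to averagedness in the standard Euclidean metric. The step-size bounds in the statement are precisely tuned so that the Young-inequality bookkeeping of the constants $\tilde\mu$, $\tilde\ell$, $\|A_i\|$, and $\delta$ produces a strictly positive quadratic form; making this estimate tight, while also accounting for the closed-loop dependence $s = h(q(z),w)$ that enters the forward step, is where the technical work concentrates.
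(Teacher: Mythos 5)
Your proposal is correct and follows essentially the same route as the paper: establish that $\Phi\succ 0$ under the stated step-size bounds, show the forward--backward operator $\mathbb{T}(\cdot,w)$ is averaged (equivalently, use cocoercivity of $\Phi^{-1}\mathbb{G}(\cdot,w)$ with constant tied to $\tilde\mu,\tilde\ell,\delta$ and maximal monotonicity of $\Phi^{-1}\mc A$ in the $\Phi$-norm), deduce SQNE and invoke Proposition~\ref{lmm:SQNE} for parts (i)--(ii), and obtain part (iii) from nonexpansiveness of the resolvent together with (C2), giving $L_T$ in terms of $\ell$ and $\lambda_{\min}(\Phi)$. The only simplification available to you is that (C1) already assumes strong monotonicity and Lipschitzness of the \emph{composed} pseudo-gradient $\mathbb F(\cdot,w)=F(\cdot,h(\cdot,w))$, so the ``inheritance'' step you flag as a concern is given by hypothesis rather than something to be derived.
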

\begin{proof} 
See Appendix~\ref{ap:FBS}.
\end{proof}
%

Algorithms based on strongly quasi-nonexpansive operators \cite[\S~4]{bauschke2011convex} include operator splitting methods (such as forward-backward \cite[\S~26.5]{bauschke2011convex}, Douglas--Rachford \cite[\S~26.3]{bauschke2011convex}
) and other averaged operator-based methods (such as proximal-point algorithm\cite[Th.~23.41]{bauschke2011convex}, alternating projection \cite[Ex.~28.11]{bauschke2011convex}). By Proposition~\ref{lmm:SQNE}, all these algorithms are valid controllers to form the closed-loop interconnection \eqref{eq:sampled-data-system}. 
{\hfill $\blacktriangle$}

\section{Closed-Loop Stability Analysis}
\label{sec:Ana}
We analyze the closed-loop system \eqref{eq:sampled-data-system} by sampling it and forming a discrete-time system, demonstrating LISS of the discretized system, then concluding LISS of \eqref{eq:sampled-data-system} by invoking \cite[Theorem 5]{nevsic1999formulas}. Periodically sampling \eqref{eq:sampled-data-system} yields
\begin{subequations}  \label{eq:discrete-time-system}
\begin{align}
    \Sigma_1^d:&\begin{cases}   \label{eq:sys2d}
    ~ x^{k+1} = \psi^k(x^k,u^{k},\tilde w^k),\\
    ~ ~~~y^k = g(x^k,w^k),
    \end{cases}\\
\Sigma_2^d: &\begin{cases} \label{eq:sys1d}
    ~ z^{k+1} = T(z^k,y^{k+1}),\\
    ~ ~~~u^k = q(z^k)
    \end{cases}
\end{align}
\end{subequations}
where $\psi^k(\xi^k,v,\tilde w^k)$ is the solution of the initial value problem
\begin{equation}
	\dot{\xi}(t) = f(\xi(t),v,w(t)),~~\xi(t^k) = \xi^k,~~t\in [t^k~t^{k+1}]
\end{equation}
at time $t = t^{k+1}$ and $\tilde w^k$ is the restriction of $w$ to $[t^k,t^{k+1})$.

\blue{As noted in Remark~\ref{rmk:branches}, the solution of the GE \eqref{eq:GE_compact} consists of finitely many isolated branches. Here, we establish LISpS for each branch, thereby defining their respective regions of attraction. The closed-loop system \eqref{eq:sampled-data-system} will then track a particular branch, determined implicitly by the initial condition $z(0)$, akin to how an initial condition in gradient-based non-convex optimization implicitly selects a specific local minimizer.}

\blue{For the sake of analysis, consider an arbitrary branch $\bar z \in S$ with corresponding solution trajectory $z^* = \bar z(w) \in \mc{S}$.} We introduce the error signals
\begin{gather*}
	 \delta x(t) = x(t) - p(u(t),w(t)) \text{ and } e(t) = z(t) - z^{*}(t)
\end{gather*}
and shift \eqref{eq:discrete-time-system} to the origin to obtain the error dynamics
\begin{subequations}  \label{eq:discrete-time-error-system}
\begin{align}
    \Sigma_1^e:&\begin{cases}   \label{eq:sys1e}
    ~ \delta x^{k+1} = \mc{G}_1(x^k,u^{k},\Delta u^k,\tilde w^k)
    \end{cases}\\
\Sigma_2^e: &\begin{cases} \label{eq:sys2e}
    ~ e^{k+1} = \mc{G}_2(e^k,\delta x^{k},\tilde w^k),\\
    ~ \Delta u^k = \mc{H}(e^k,\delta x^k,\tilde w^k),
    \end{cases}
\end{align}
\end{subequations}
where $\Delta u^k = u^{k+1} - u^k$, $p$ is the steady-state mapping of the discretized and continuous-time systems, and the expressions for $\mc{G}_1,\mc{G}_2$ and $\mc{H}$ are given in Appendix~\ref{app:expressions}. The resulting feedback interconnection is illustrated in Figure~\ref{fig:iss_diagram}.
\begin{figure}[htbp]
	\centering	\includegraphics[width=0.9\columnwidth]{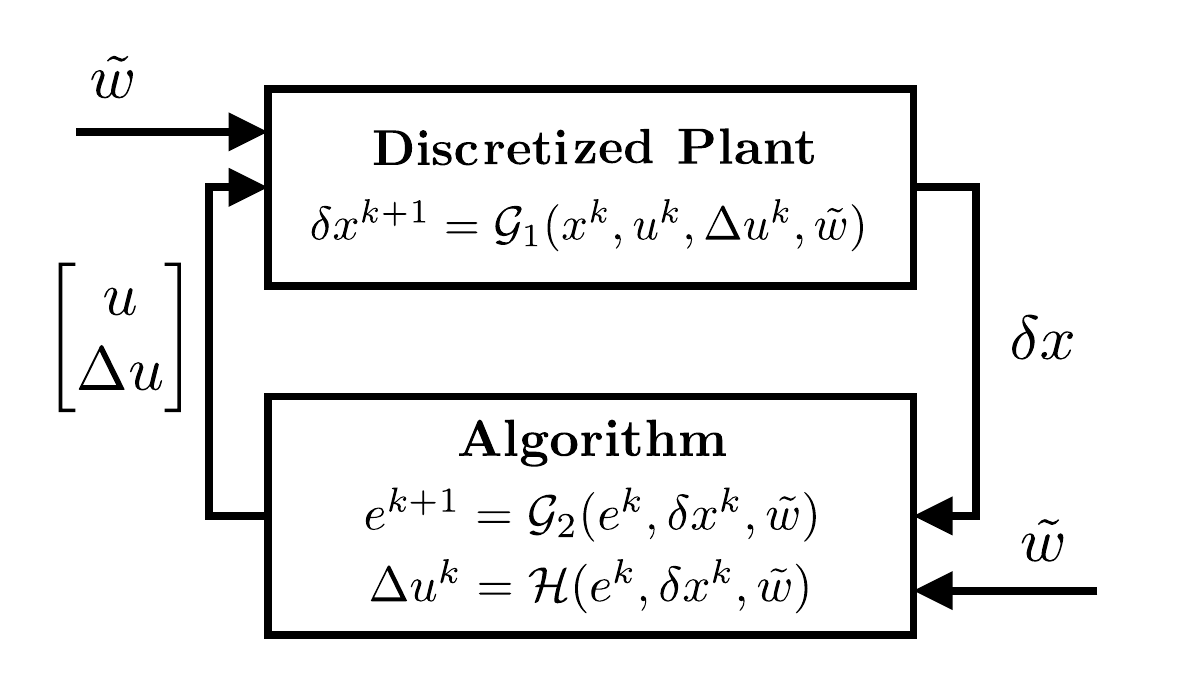}
	\caption{The discrete-time interconnection \eqref{eq:discrete-time-system} can be reconfigured into error coordinates leading to a feedback interconnection of two systems represented by $\mc G_1$, $\mc G_2$, and $\mc{H}$. Expressions for $\mc{G}_1,\mc{G}_2$ and $\mc{H}$ are given in Appendix~\ref{app:expressions}.}
	\label{fig:iss_diagram}
\end{figure}

The main step in the analysis is to show that \eqref{eq:discrete-time-error-system} is LISpS with respect to the disturbance sequence
\begin{equation} \label{eq:d_def}
	d^k = \underset{t\in [t^k~t^{k+1}]}{\esssup} |\dot{w}(t)|.
\end{equation}
We first show that \eqref{eq:sys1e} is LISS with respect to $\Delta u$ and $d$ then show that \eqref{eq:sys2e} is LISS and locally input-to-output stable with respect to $\delta x$ and $d$. Then we employ a small-gain theorem to derive conditions for LISpS of the feedback interconnection.

The first step is to show LISS of the plant.
\begin{thm}[LISS of the plant] \label{thm:plant_LISS}
Let Assumptions~\ref{ass:system_properties}--\ref{ass:algo} hold. Then \eqref{eq:sys1e} is LISS with respect to $\Delta u$ and $d$, i.e., there exists $\beta_x\in \KL$, $\gamma_x^u\in \L$, $\gamma_x^d\in \KL$, $\pi_1> 0$, and $\pi_2\in \K$ such that
\begin{equation}
	V^k\leq \beta_x(V^0,k\tau) + \gamma_x^u(\tau)\|\Delta u\| + \gamma_x^d(\|d\|,\tau)
\end{equation}
where $V^k = V(x^k,u^k,w^k)$, provided that $V^0 \leq 0.25 \eps_x$, $\|\Delta u\| \leq \pi_1 \eps_x$, $\pi_2(\|d\|) \leq \eps_x$ and $\|d\|\leq \eps_w$.
\end{thm}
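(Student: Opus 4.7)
The plan is to derive a one-step discrete-time recursion $V^{k+1} \leq \lambda(\tau) V^k + (\text{gains on }|\Delta u^k|, d^k)$ from the continuous-time Lyapunov inequality in Assumption~\ref{ass:system_properties}(v), iterate it to obtain a LISS bound, and verify that the local region where the inequality applies is preserved. Each sampling interval is split into two effects: continuous evolution under a constant input, and the input jump at the sampling instant. On $[t^k,t^{k+1}]$ the input satisfies $u(t)\equiv u^k$, so Assumption~\ref{ass:system_properties}(v) gives $\dot V(x(t),u^k,w(t)) \leq -\alpha_5 V(x(t),u^k,w(t)) + \sigma_1(|\dot w(t)|)$; the comparison lemma then yields
\begin{equation*}
V(x^{k+1},u^k,w^{k+1}) \leq e^{-\alpha_5\tau}V^k + \frac{1-e^{-\alpha_5\tau}}{\alpha_5}\sigma_1(d^k).
\end{equation*}

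At $t^{k+1}$ the input jumps to $u^{k+1}$, shifting only the reference point of $V$. Using the quadratic sandwich bounds, the triangle inequality, Young's inequality $(a+b)^2\le(1+\eta)a^2+(1+\eta^{-1})b^2$, and local Lipschitz continuity of the steady-state map $p(\cdot,w)$---which follows from the implicit function theorem applied to $f(p,u,w)=0$, since LISS forces $\nabla_x f$ to be nonsingular at the steady state---one obtains, for any $\eta>0$,
\begin{equation*}
V^{k+1}\leq(1+\eta)\frac{\alpha_4}{\alpha_3}V(x^{k+1},u^k,w^{k+1})+(1+\eta^{-1})\alpha_4 L_p^2|\Delta u^k|^2.
\end{equation*}
Composing the two inequalities produces the recursion $V^{k+1} \leq \lambda(\eta,\tau)V^k + C_1(\eta,\tau)\sigma_1(\|d\|) + C_2(\eta)\|\Delta u\|^2$ with contraction factor $\lambda(\eta,\tau) = (1+\eta)(\alpha_4/\alpha_3)e^{-\alpha_5\tau}$. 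Choosing $\eta$ small and $\tau$ above the explicit threshold $\alpha_5^{-1}\ln((1+\eta)\alpha_4/\alpha_3)$ gives $\lambda<1$, after which iterating and summing the resulting geometric series yields $V^k \leq \lambda^k V^0 + (1-\lambda)^{-1}[C_1\sigma_1(\|d\|)+C_2\|\Delta u\|^2]$, from which the class functions $\beta_x,\gamma_x^u,\gamma_x^d$ are read off directly.

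The last task is to establish local invariance: Assumption~\ref{ass:system_properties}(v) is only valid while $V(x(t),u^k,w(t))\leq\eps_x$ and $|\dot w|\leq\eps_w$, so one must show that admissible initial data keep the continuous-time trajectory inside this region. The constants $\pi_1>0$ and $\pi_2\in\mc K$ are chosen precisely so that, with $V^0\leq\eps_x/4$, $\|\Delta u\|\leq\pi_1\eps_x$, $\pi_2(\|d\|)\leq\eps_x$, and $\|d\|\leq\eps_w$, the right-hand side of the recursion together with the intra-interval exponential envelope stay below $\eps_x$; the full bound then follows by induction on $k$. The main obstacle is exactly this bookkeeping: every input jump inflates $V$ by the factor $(1+\eta)\alpha_4/\alpha_3>1$ before the exponential decay takes over, so a uniform envelope must be propagated that accommodates arbitrary admissible sequences of switches and never escapes the LISS region.
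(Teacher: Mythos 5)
Your overall architecture (comparison lemma over each hold interval, a correction at the sampling instant for the input jump, geometric summation, then an induction to keep the trajectory in the region where Assumption~\ref{ass:system_properties}(v) applies) matches the paper's. The difference is in how the input jump is absorbed, and this is where your argument falls short of the stated theorem. By passing through the sandwich bounds $\alpha_3|x-p|^2 \le V \le \alpha_4|x-p|^2$ and Young's inequality, you inflate $V$ by the multiplicative factor $(1+\eta)\alpha_4/\alpha_3 > 1$ at every sampling instant, so your contraction factor is $\lambda=(1+\eta)(\alpha_4/\alpha_3)e^{-\alpha_5\tau}$ and you must impose $\tau > \alpha_5^{-1}\ln\bigl((1+\eta)\alpha_4/\alpha_3\bigr)$. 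Theorem~\ref{thm:plant_LISS} claims LISS for \emph{every} $\tau>0$, with $\gamma_x^u\in\L$ defined on all of $\reals_{>0}$; no lower bound on $\tau$ enters until the small-gain condition of Theorem~\ref{thm:dt_coupled_iss} and Theorem~\ref{thm:main_theorem}. The paper avoids the inflation entirely by treating the jump as an \emph{additive} perturbation of the Lyapunov function itself: $|V(x^k,u^{k+1},w^k)-V^k|\le L_V|\Delta u^k|$, where $L_V$ is the Lipschitz constant of $V$ in $u$ (available since $V$ is continuously differentiable and the relevant sets are compact). This keeps the contraction factor at $e^{-\alpha_5\tau}<1$ for all $\tau>0$ and directly produces the claimed gain $\gamma_x^u(\tau)=\tfrac{L_V e^{-\alpha_5\tau}}{1-e^{-\alpha_5\tau}}$, whose specific linear-in-$\|\Delta u\|$, class-$\L$-in-$\tau$ form is what \eqref{eq:small-gain} and Corollary~\ref{corr:simplified-gain} later rely on. (Your quadratic term $C_2\|\Delta u\|^2$ can be linearized on the restricted domain $\|\Delta u\|\le\pi_1\eps_x$, so that part is cosmetic; the $\tau$-threshold is not.)

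A second, more technical gap: the Lipschitz constant $L_p$ of $p(\cdot,w)$ that you invoke is not available under Assumption~\ref{ass:system_properties}. There $f$ is only locally Lipschitz, not $C^1$, so the implicit function theorem does not apply to $f(p,u,w)=0$; and LISS does not by itself force nonsingularity of $\nabla_x f$ at the equilibrium even when the Jacobian exists. From the assumptions one can only extract H\"older-$1/2$ continuity of $p$ in $u$ (via $\alpha_3|p(u,w)-p(u',w)|^2 \le V(p(u,w),u',w) \le L_V|u-u'|$), which would further degrade your jump estimate. Replacing the $p$-shift argument with the paper's direct Lipschitz bound on $V$ in $u$ repairs both issues at once.
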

\begin{proof}
See Appendix~\ref{ap:plant_LISS}.
\end{proof}
This result is a straightforward consequence of the local input-to-state stability of the original continuous-time system (Assumption~\ref{ass:system_properties}) and states that the discrete-time system \eqref{eq:sys1d} will converge to a steady-state for any given constant input with the changes in $w$ and $u$ acting as disturbances. The impact of input variations on convergence to the steady-state manifold, quantified by $\gamma_x^u\in \L$, decreases as the sampling period $\tau$ increases,  as control input update rate decreases. 

Next, we show that the equilibrium-seeking algorithm \eqref{eq:algo} when viewed as a dynamical system \eqref{eq:sys2d} is also LISS.


\begin{thm}[LIOS of the algorithm] \label{thm:algo_LIOS}
Let Assumptions~\ref{ass:system_properties}-- \ref{ass:algo} hold. Then the system \eqref{eq:sys2e} is locally input-output stable (LIOS), i.e., there exist $\bar \veps_1, \bar \veps_2,\bar\veps_3 > 0$, $\beta_z,\beta_u \in \KL$, $\gamma_z^x, \gamma_u^x \in \KL$, and $\gamma_z^d,\gamma_u^d\in \K$ such that for all $W^0 \leq \bar \veps_1$, $\|V\| \leq \bar \veps_2$, $\|d\| \leq \bar \veps_3$
\begin{subequations}
\begin{gather}
	W^k\leq \beta_z(W^0,k) + \gamma_z^x(\|V\|,\tau) + \gamma_z^d(\|d\|), \label{eq:algo-lios1}\\
	\|\Delta u^k\| \leq \beta_u(W^0,k) + \gamma_u^x(\|V\|,\tau) + \gamma_u^d(\|d\|), \label{eq:algo-lios2}
\end{gather}
\end{subequations}
where $\|V\| = \sup_{k\geq0} V(x^k,u^k,w^k)$ and $W^k = W(z^k,w^k)$.
\end{thm}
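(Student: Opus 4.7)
The plan is to derive a one-step perturbed descent inequality for the merit function $W^k := W(z^k,w^k)$, invoke a standard discrete-time ISS Lyapunov comparison argument to obtain \eqref{eq:algo-lios1}, and finally derive \eqref{eq:algo-lios2} from \eqref{eq:algo-lios1} using the Lipschitz continuity of $q$ and of the branch $\bar z(\cdot)$.

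First I would compare the actual iterate $z^{k+1}=T(z^k,y^{k+1})$ with its \emph{nominal} counterpart $\tilde z^{k+1}:=\mathbb{T}(z^k,w^k)=T(z^k,h(q(z^k),w^k))$. By Assumption~\ref{ass:algo}(iii), $|z^{k+1}-\tilde z^{k+1}|\leq L_T\,|y^{k+1}-h(q(z^k),w^k)|$. Splitting the bracket via the triangle inequality yields a ``measurement error'' piece $|g(x^{k+1},w^{k+1})-g(p(u^k,w^{k+1}),w^{k+1})|\leq L_g|\delta x^{k+1}|\leq L_g\sqrt{V^{k+1}/\alpha_3}$ (using Assumption~\ref{ass:system_properties}(ii),(v)), and a ``parameter-variation'' piece $|h(u^k,w^{k+1})-h(u^k,w^k)|$, bounded by a class-$\K$ function of $|w^{k+1}-w^k|\leq \tau\|d\|$ via local Lipschitzness of $h=g\circ p$ (which follows from Assumption~\ref{ass:system_properties} and the implicit function theorem applied to $f(p(u,w),u,w)=0$). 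Hence $|z^{k+1}-\tilde z^{k+1}|\leq \kappa_1(\|V\|)+\kappa_2(\tau\|d\|)$ for some $\kappa_1,\kappa_2\in\K$.

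I would then combine this with Assumption~\ref{ass:algo}(ii) and Theorem~\ref{thm:sol_traj} (which gives local Lipschitzness of $\bar z(\cdot)$, so $|\bar z(w^{k+1})-\bar z(w^k)|\leq L_{\bar z}\tau\|d\|$) by chaining through the sandwich \eqref{eq:Wbounds} and the weak triangle inequality \eqref{eq:weak_triangle_ineq}: $W^{k+1}\leq \alpha_2\bigl(|\tilde z^{k+1}-\bar z(w^k)|+|z^{k+1}-\tilde z^{k+1}|+|\bar z(w^{k+1})-\bar z(w^k)|\bigr)$, with the nominal distance controlled by $|\tilde z^{k+1}-\bar z(w^k)|\leq \alpha_1^{-1}\bigl(W^k-\alpha(\alpha_2^{-1}(W^k))\bigr)$ (which uses $\alpha_1\in\Kinf$ for invertibility). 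This produces a perturbed discrete ISS-Lyapunov inequality $W^{k+1}\leq \tilde\rho(W^k)+\chi_1(\|V\|)+\chi_2(\tau\|d\|)$ with $\tilde\rho\in\K$ satisfying $\tilde\rho(s)<s$ on a neighbourhood of the origin and $\chi_1,\chi_2\in\K$. Standard discrete-time ISS Lyapunov arguments (see, e.g., \cite{sontag1996new}) then deliver \eqref{eq:algo-lios1}; the radii $\bar\veps_1,\bar\veps_2,\bar\veps_3$ are chosen small enough to keep $|e^k|$ inside the local radius $\veps$ of Assumption~\ref{ass:algo}(ii) for all $k$.

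Finally, \eqref{eq:algo-lios2} follows from $\Delta u^k=q(z^{k+1})-q(z^k)$: by Assumption~\ref{ass:strong_reg}(i), $|\Delta u^k|\leq L_q|z^{k+1}-z^k|\leq L_q\bigl(|e^{k+1}|+|e^k|+|\bar z(w^{k+1})-\bar z(w^k)|\bigr)$; using $|e^k|\leq \alpha_1^{-1}(W^k)$ and $|\bar z(w^{k+1})-\bar z(w^k)|\leq L_{\bar z}\tau\|d\|$, substituting \eqref{eq:algo-lios1}, and absorbing constants into the $\KL$ and $\K$ gains gives \eqref{eq:algo-lios2}. The main obstacle will be the careful bookkeeping that verifies $\tilde\rho(s)<s$ after the weak-triangle splitting (nontrivial when $\alpha_1,\alpha_2$ are not linear), together with the check that the various local neighbourhoods --- those in Assumption~\ref{ass:algo}(ii), Assumption~\ref{ass:system_properties}(v), and the Lipschitz regions of $g,h,p$ --- intersect in a common forward-invariant set containing the trajectory. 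The $\tau$-dependence of $\gamma_z^x$ and $\gamma_u^x$ should emerge from the sampling structure, since a larger $\tau$ gives $|\delta x^{k+1}|$ more time to decay between samples and thereby weakens the measurement-perturbation's impact on the algorithm.
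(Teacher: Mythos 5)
Your overall skeleton matches the paper's: compare the actual update $T(z^k,y^{k+1})$ with the nominal one $\mathbb{T}(z^k,\cdot)$, bound the discrepancy by $L_T L_g$ times the distance to the steady-state manifold plus a term in $\tau\|d\|$ (this is exactly the paper's Lemma~\ref{lmm:T_Ttilde_bound}), feed the result into a perturbed discrete ISS argument, and then obtain \eqref{eq:algo-lios2} from Lipschitz continuity of $q$ and $\bar z$. However, there is a genuine gap in the step that produces the perturbed decrease inequality for $W$. You propose to pass from $W^{k+1}$ to distances via $\alpha_2$, apply the nominal contraction to get $|\tilde z^{k+1}-\bar z(w^k)|\leq \alpha_1^{-1}\bigl(W^k-\alpha(\alpha_2^{-1}(W^k))\bigr)$, and return via the weak triangle inequality, yielding $\tilde\rho(s)=\alpha_2\bigl(2\,\alpha_1^{-1}(s-\alpha(\alpha_2^{-1}(s)))\bigr)$. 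The property $\tilde\rho(s)<s$ near the origin, which you flag as ``bookkeeping,'' in fact fails in general: the round trip $\alpha_2\circ 2\,\alpha_1^{-1}$ can strictly expand. For instance, with $\alpha_1(r)=r^2$, $\alpha_2(r)=r$ (valid locally) and any small $\alpha\in\K$, one gets $\tilde\rho(s)\approx 2\sqrt{s}\gg s$ for small $s$, so the decrease is destroyed even though Assumption~\ref{ass:algo} holds. Since Assumption~\ref{ass:algo} allows genuinely nonlinear, non-comparable $\alpha_1,\alpha_2$, this route cannot be completed without extra hypotheses.

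The paper avoids this by never leaving $W$-space for the nominal decrease: it writes $W^{k+1}$ as $W(\mathbb{T}(z^k,w^{k+1}),w^{k+1})$ plus a remainder, applies \eqref{eq:Wdot} directly to get $W(\mathbb{T}(z^k,w^{k+1}),w^{k+1})\leq W(z^k,w^{k+1})-\alpha(|z^k-\bar z^{k+1}|)$, and then controls both $|W(z^k,w^{k+1})-W^k|$ and $|W(z^{k+1},w^{k+1})-W(\mathbb{T}(z^k,w^{k+1}),w^{k+1})|$ additively through a modulus of continuity $\omega_1\in\K$ of $W$ (uniform continuity on a compact set via Heine--Cantor). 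The weak triangle inequality is applied only inside the argument of $\alpha$, where it merely weakens the decrease to $-\tilde\alpha(|e^k|)$ plus a $d$-dependent offset; this yields the dissipation inequality $W^{k+1}-W^k\leq-\tilde\alpha(|e^k|)+\omega_2(V^k,\tau)+\omega_4(d^k)$, to which standard discrete-time LISS results apply. A secondary, more minor issue: your ``parameter-variation piece'' relies on Lipschitz continuity of $p$ (hence $h$) in $w$, which is not assumed and does not follow from Assumption~\ref{ass:system_properties} via the implicit function theorem without a nondegeneracy condition on $f$; the paper sidesteps this by comparing against $\mathbb{T}(z^k,w^{k+1})$ (the nominal map at the \emph{new} parameter) so that the only error is the distance of $x^{k+1}$ to $p(u^k,w^{k+1})$, controlled by Lemma~\ref{lmm:V_decrease}. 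Your derivation of \eqref{eq:algo-lios2} from \eqref{eq:algo-lios1} is essentially sound and close to the paper's Lemma~\ref{lmm:Delta_u_bound}.
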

\begin{proof}
See Appendix~\ref{ap:algo_LIOS}.
\end{proof}

In other words, the algorithm \eqref{eq:algo} is not only convergent whenever the plant is at steady-state, as required by Assumption~\ref{ass:algo}, but also robust with respect to transient deviation from this steady-state as measured by the ISS Lyapunov function $V$ defined in Theorem~\ref{thm:plant_LISS}. 

Having established LISS of the sub-systems, we derive conditions under which the interconnection is LISS. 
\begin{thm} \label{thm:dt_coupled_iss}
Let Assumptions~\ref{ass:system_properties}--\ref{ass:algo} hold. Then, the discrete-time closed-loop system \eqref{eq:discrete-time-error-system} is LISS if the small gain condition
\begin{equation} \label{eq:small-gain}
	\gamma_x^u(\tau) \gamma_u^x(s,\tau) < s~~\forall s\in[0,\tilde \eps].
\end{equation}
is satisfied where $\tilde\eps = \min\{0.25 \eps_x, \bar \veps_2\}$, and $\gamma_x^u\in \mc{L}$, $\gamma_u^x\in \KL$, $\eps_x$, and $\bar \veps_2$ are defined in Theorems~\ref{thm:plant_LISS} and \ref{thm:algo_LIOS}.
\end{thm}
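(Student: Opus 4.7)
The plan is to view \eqref{eq:discrete-time-error-system} as a feedback interconnection between the plant $\Sigma_1^e$ (shown LISS from $\Delta u$ to $V$ with gain $\gamma_x^u(\tau)$ in Theorem~\ref{thm:plant_LISS}) and the algorithm $\Sigma_2^e$ (shown LIOS from $V$ to $\Delta u$ with gain $\gamma_u^x(\cdot,\tau)$ in Theorem~\ref{thm:algo_LIOS}), and then to apply a discrete-time nonlinear small-gain theorem in the spirit of Jiang--Wang. The composed loop gain is $\gamma_x^u(\tau)\,\gamma_u^x(\cdot,\tau)$, and condition \eqref{eq:small-gain} is precisely the requirement that this composition be a strict contraction on the admissible region $[0,\tilde\eps]$.

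Concretely, I would first pass the sub-estimates of Theorems~\ref{thm:plant_LISS} and \ref{thm:algo_LIOS} to truncated suprema $\|V\|_{[0,k]}$ and $\|\Delta u\|_{[0,k]}$ (this causal version is standard and avoids algebraic cheating with non-causal sup norms), yielding
\[
\|V\|_{[0,k]} \leq \bar\beta_x(V^0) + \gamma_x^u(\tau)\,\|\Delta u\|_{[0,k]} + \gamma_x^d(\|d\|,\tau),
\]
\[
\|\Delta u\|_{[0,k]} \leq \bar\beta_u(W^0) + \gamma_u^x(\|V\|_{[0,k]},\tau) + \gamma_u^d(\|d\|).
\]
Substituting the second into the first and rearranging gives an implicit inequality $\|V\|_{[0,k]} - \gamma_x^u(\tau)\gamma_u^x(\|V\|_{[0,k]},\tau) \leq R(V^0,W^0,\|d\|)$, where $R$ aggregates the class-$\mc K$ contributions via the weak triangle inequality \eqref{eq:weak_triangle_ineq}. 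Under \eqref{eq:small-gain}, the map $s\mapsto s-\gamma_x^u(\tau)\gamma_u^x(s,\tau)$ is positive and strictly increasing on $[0,\tilde\eps]$, hence invertible there, and one solves for $\|V\|_{[0,k]}$ explicitly as a sum of class-$\mc K$ functions in $V^0$, $W^0$, and $\|d\|$. Substituting this bound back into the LIOS estimate of Theorem~\ref{thm:algo_LIOS} produces an analogous estimate for $W^k$. Combining the asymptotic ultimate bounds with the transient $\KL$ decay inherited from $\beta_x$ and $\beta_z$ yields a joint LISS bound on $(\delta x^k, e^k)$ with input $d$, as required by Definition~\ref{def:LISS}. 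Finally, \cite[Theorem 5]{nevsic1999formulas} lifts this discrete-time LISS property to LISS of the continuous-time sampled-data interconnection \eqref{eq:sampled-data-system}.

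The main technical obstacle is bookkeeping the local region of validity. Each sub-estimate of Theorems~\ref{thm:plant_LISS} and \ref{thm:algo_LIOS} holds only when $V^0$, $W^0$, and $\|d\|$ are small, and the intermediate quantity $\|V\|_{[0,k]}$ must be guaranteed to remain inside $[0,\tilde\eps]$ throughout the inductive closure of the small-gain loop, so that \eqref{eq:small-gain} is usable at every step. A secondary subtlety is the explicit inversion of $s - \gamma_x^u(\tau)\gamma_u^x(s,\tau)$: one must argue that the inverse is of class $\mc K$ and splits additively into the three input channels, which is where \eqref{eq:weak_triangle_ineq} and the monotonicity of the gains are pivotal. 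These are exactly the delicate points handled in discrete small-gain theorems of the Jiang--Teel--Praly type, and the present theorem reduces to a careful invocation of that machinery with the explicit gains produced in the two preceding theorems.
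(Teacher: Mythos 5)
Your overall strategy matches the paper's: the same decomposition into the plant subsystem (LISS from $\Delta u$ to $V$ with gain $\gamma_x^u(\tau)$, Theorem~\ref{thm:plant_LISS}) and the algorithm subsystem (LIOS from $V$ to $\Delta u$ with gain $\gamma_u^x(\cdot,\tau)$, Theorem~\ref{thm:algo_LIOS}), closed by a nonlinear small-gain argument with loop gain $\gamma_x^u(\tau)\gamma_u^x(\cdot,\tau)$. Where you differ is in the machinery: the paper passes both estimates to their asymptotic (limsup) form via \cite[Lemma 3.8]{jiang2001input}, chains the two limsup gain inequalities, and then simply cites an off-the-shelf small-gain theorem \cite[Theorem 1]{jiang2004nonlinear}, \cite{karafyllis2007small} to conclude LISS under \eqref{eq:small-gain}; you instead re-derive the small-gain closure by hand with truncated suprema and an explicit inversion of $s\mapsto s-\gamma_x^u(\tau)\gamma_u^x(s,\tau)$. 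Your route is more self-contained and makes the region-of-validity bookkeeping explicit (which the paper handles only by the phrase ``sufficiently small''), at the cost of redoing standard work; the paper's route is shorter but leans on the cited theorems to absorb exactly the delicate points you identify.

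Two minor issues with your write-up. First, the claim that $s-\gamma_x^u(\tau)\gamma_u^x(s,\tau)$ is \emph{strictly increasing} on $[0,\tilde\eps]$ does not follow from \eqref{eq:small-gain}, which only guarantees positivity for $s>0$; the difference of a class-$\K$ function and the identity need not be monotone. The standard repair is to replace the inverse by the nondecreasing majorant $\chi(c)=\sup\{s\in[0,\tilde\eps]\,:\,s-\gamma_x^u(\tau)\gamma_u^x(s,\tau)\leq c\}$, which satisfies $\chi(0)=0$ and $\chi(c)\to 0$ as $c\to 0$ by compactness, and can then be upper-bounded by a class-$\K$ function; this is exactly what the cited small-gain theorems do internally. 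Second, the final lifting to the continuous-time sampled-data system via \cite[Theorem 5]{nevsic1999formulas} is not part of this theorem: Theorem~\ref{thm:dt_coupled_iss} asserts LISS only of the discrete-time error system \eqref{eq:discrete-time-error-system}, and the sampled-data lifting is deferred to Theorem~\ref{thm:main_theorem}. Neither point is fatal, but the first should be fixed before the argument can be called complete.
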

\begin{proof}
See Appendix~\ref{ap:dt_coupled_iss}.
\end{proof}
Theorem~\ref{thm:dt_coupled_iss} is a typical small-gain condition, since \eqref{eq:sys1e} and \eqref{eq:sys2e} are both LISS their interconnection is LISS if the interconnection gains are small enough. Both gains $\gamma_x^u$ and $\gamma_u^x$ tend to decrease, i.e., the stability margins of the interconnection grow, as the sampling period $\tau$ increases.

Due to broad range of system and algorithm combinations allowed under Assumptions~\ref{ass:system_properties}--\ref{ass:algo}, the small-gain condition can be difficult to verify in practice. Theorem~\ref{thm:dt_coupled_iss} can be sharpened if the algorithm is locally linearly convergent.

\begin{cor} \label{corr:simplified-gain}
Let Assumption~\ref{ass:system_properties}--\ref{ass:algo} hold. Additionally, assume that the algorithm \eqref{eq:algo} is locally q-linearly convergent with convergence rate $\eta \in (0,1)$, i.e., Assumption \ref{ass:algo} holds with $W(z,w) = |z-\bar z(w)|_P$, $\alpha(s) = (1-\eta) s$, $\alpha_1(s) = \lambda_{\min}(P) s$, and $\alpha_2(s) = \lambda_{\max}(P) s$. Then $\gamma_u^x(s) = c_1 e^{-\alpha_5\tau} s $, $\gamma_x^u(\tau) = \frac{L_V e^{-\alpha_5 \tau}}{1- e^{-\alpha_5 \tau}}$
and \eqref{eq:small-gain} reduces to the condition
\begin{equation} \label{eq:small-gain-linear}
	\frac{c_1 L_V e^{-2\alpha_5 \tau}}{1- e^{-\alpha_5 \tau}} < 1
\end{equation}
where $c_1 = L_q L_T L_g \sqrt{\frac{\alpha_4}{\alpha_3}} \left(1+ \frac{L_z \lambda_{\max}(P)}{(1-\eta) \sqrt{\lambda_{\min}(P)}}\right)$ and $L_V,L_z,L_T$ and $L_g$ are Lipschitz constants for $V,\bar z, T$, and $g$ with respect to $u, w, y,$ and all arguments, respectively.
{\hfill $\square$}
\end{cor}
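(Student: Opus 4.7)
The plan is to substitute the explicit form of the merit function implied by linear convergence into the bounds produced by the proofs of Theorems~\ref{thm:plant_LISS} and~\ref{thm:algo_LIOS}, extracting closed-form expressions for the interconnection gains $\gamma_x^u$ and $\gamma_u^x$, and then plug these into the small-gain condition \eqref{eq:small-gain} and simplify. Because linear convergence will make $\gamma_u^x$ linear in its first argument, $s$ factors cleanly out of \eqref{eq:small-gain}, producing the scalar inequality \eqref{eq:small-gain-linear} in $\tau$ alone.

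Computing $\gamma_x^u$ amounts to revisiting the argument of Theorem~\ref{thm:plant_LISS}. Applying the comparison lemma to $\dot V \leq -\alpha_5 V + \sigma_1(|\dot w|)$ (Assumption~\ref{ass:system_properties}(v)) over $[t^k,t^{k+1})$ at the constant input $u^k$ yields $V(x^{k+1},u^k,w^{k+1}) \leq e^{-\alpha_5\tau}V^k + \int_{t^k}^{t^{k+1}} e^{-\alpha_5(t^{k+1}-s)}\sigma_1(|\dot w(s)|)\,ds$. The input jump at $t^{k+1}$ is absorbed through the Lipschitz constant $L_V$ of $V$ in $u$, giving a one-step recursion of the form $V^{k+1} \leq e^{-\alpha_5\tau}\bigl(V^k + L_V|\Delta u^k|\bigr) + (\text{disturbance term})$. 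Iterating and summing the resulting geometric series in $e^{-\alpha_5\tau}$ produces the coefficient $\gamma_x^u(\tau) = L_V e^{-\alpha_5\tau}/(1-e^{-\alpha_5\tau})$ multiplying $\|\Delta u\|$.

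For $\gamma_u^x$, the linear convergence hypothesis specializes Assumption~\ref{ass:algo}(ii) to the contraction $|\mathbb{T}(z,w)-\bar z(w)|_P \le \eta\,|z-\bar z(w)|_P$ on a neighborhood of the branch. I would bound $|\Delta u^k| \leq L_q|z^{k+1}-z^k|$ and control $z^{k+1}-z^k$ by telescoping through the moving target $\bar z(w^{k+1})$: using the contraction together with the Lipschitz constant $L_T$ of $T$ in $s$, the incremental step is driven by the measurement mismatch $|y^{k+1}-h(u^k,w^{k+1})|$, which by $L_g$-Lipschitzness of $g$ is bounded by $L_g|x^{k+1}-p(u^k,w^{k+1})|$, and in turn by $\sqrt{\alpha_4/\alpha_3}$ times the intersample-decayed plant error through the bounds $\alpha_3|x-p|^2\le V\le \alpha_4|x-p|^2$ and the $e^{-\alpha_5\tau}$ contraction of $V$ at constant $u$. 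Telescoping to handle the $w$-induced drift of the branch introduces the Lipschitz constant $L_z$ of $\bar z$, the contraction-summation factor $(1-\eta)^{-1}$, and the norm-equivalence factors $\lambda_{\max}(P)$ and $\sqrt{\lambda_{\min}(P)}$ when converting between $|\cdot|_P$ and $|\cdot|$. Collecting these yields exactly $c_1 = L_q L_T L_g \sqrt{\alpha_4/\alpha_3}\bigl(1 + L_z\lambda_{\max}(P)/[(1-\eta)\sqrt{\lambda_{\min}(P)}]\bigr)$ and the prefactor $e^{-\alpha_5\tau}$, giving $\gamma_u^x(s,\tau) = c_1 e^{-\alpha_5\tau}\,s$.

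Substituting into \eqref{eq:small-gain} gives $\tfrac{L_V e^{-\alpha_5\tau}}{1-e^{-\alpha_5\tau}}\,c_1 e^{-\alpha_5\tau} s < s$ for all admissible $s\in(0,\tilde\eps]$; dividing through by $s$ yields \eqref{eq:small-gain-linear}. The main obstacle I anticipate is the careful bookkeeping of constants in the derivation of $c_1$, namely ensuring that the $P$-weighted contraction, the two-sided bounds on $V$, and the branch-sensitivity estimate compose to exactly the stated combination of $\lambda_{\max}(P)$, $\sqrt{\lambda_{\min}(P)}$, $(1-\eta)^{-1}$, and $L_z$, and that the quadratic relationship between $V$ and $|x-p|$ is correctly handled on the operating neighborhood so that the resulting $\gamma_u^x$ is genuinely linear in $s$ rather than merely of class $\K$.
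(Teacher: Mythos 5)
The paper itself states Corollary~\ref{corr:simplified-gain} without a proof; it is meant to follow by specializing the explicit gain expressions that appear inside the proofs of Theorems~\ref{thm:plant_LISS} and~\ref{thm:algo_LIOS}, which is exactly what you do. Your derivation of $\gamma_x^u(\tau)=L_V e^{-\alpha_5\tau}/(1-e^{-\alpha_5\tau})$ reproduces the expression in \eqref{eq:pf7} verbatim (comparison lemma at constant input, absorb the input jump via $L_V$, sum the geometric series), and your decomposition of $\gamma_u^x$ into a direct feedthrough term ($\sigma_4=L_q\sigma_2$, which supplies the leading $L_qL_TL_g\sqrt{\alpha_4/\alpha_3}$ and the ``$1$'' inside the parenthesis of $c_1$) plus an indirect term through the merit-function recursion ($\sigma_5\circ 2\gamma_z^x$, which supplies the $(1-\eta)^{-1}$, the norm-equivalence factors $\lambda_{\max}(P)/\sqrt{\lambda_{\min}(P)}$, and $L_z$) matches the composition $\gamma_u^x=\sigma_4+\sigma_5\circ 2\gamma_z^x$ established at the end of Appendix~\ref{ap:algo_LIOS}. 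The final substitution and cancellation of $s$ is correct.

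The one genuine gap is the point you flag but do not resolve: in Theorems~\ref{thm:plant_LISS} and~\ref{thm:algo_LIOS} the interconnection signal is $V\sim|\delta x|^2$, and the conversion $|x^+-p|\le\sqrt{V^+/\alpha_3}$ used in Lemma~\ref{lmm:T_Ttilde_bound} makes $\gamma_u^x$ behave like $\sqrt{s}$ in $V$-units, not like $c_1e^{-\alpha_5\tau}s$ — and a square-root gain is precisely the pathology that forces the practical offset in Theorem~\ref{thm:main_theorem}. The stated linear gain, with its factor $\sqrt{\alpha_4/\alpha_3}$, only emerges if you close the loop in $|\delta x|$ (equivalently $\sqrt{V}$): you must restate the plant estimate as a recursion for $\sqrt{V}$, using $\sqrt{e^{-\alpha_5\tau}V+\sigma_1(d)}\le e^{-\alpha_5\tau/2}\sqrt{V}+\sqrt{\sigma_1(d)}$, and correspondingly reinterpret $L_V$ and the decay rate, so that both gains are expressed in the same (length, not squared-length) units before composing them in \eqref{eq:small-gain}. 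Without fixing this choice of coordinates explicitly, the product $\gamma_x^u(\tau)\gamma_u^x(s,\tau)$ is not proportional to $s$ near the origin and the clean division by $s$ in your last step is not justified. Make that coordinate change explicit (or equivalently treat $\alpha_3,\alpha_4$ as linear rather than quadratic sandwich bounds throughout) and the argument closes.
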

Note that 
the small-gain condition \eqref{eq:small-gain-linear} can be always satisfied for large enough $\tau$. Continuous and discrete-time analogs of Corollary~\ref{corr:simplified-gain} can be found in \cite{hauswirth2020timescale} and \cite{simpson2021low}, respectively. 

In the general case, the gain $\gamma_u^x$ is nonlinear and one cannot guarantee LISS for sufficiently large finite $\tau$. We can, however, show LISpS with a decaying perturbation term. 

\begin{thm} \label{thm:main_theorem}
Let Assumptions~\ref{ass:system_properties}--\ref{ass:algo} hold. \blue{Then for each branch $\bar z \in S$} there exists $\bar \tau \in(0,\infty)$  such that for $\tau > \bar \tau$ the sampled-data closed-loop system \eqref{eq:sampled-data-system} is locally input-to-state practically stable (LISpS), i.e., there exist $\beta\in \KL$, $\gamma\in \K$, $\nu_1,\nu_2,\nu_3 > 0$, and $b:\reals_{>0}\to \reals_{\geq 0}$ satisfying $\lim_{\tau\to\infty} b(\tau) = 0$ such that, for all $t\geq 0$
\begin{equation}
	\left|\begin{bmatrix}
		\delta x(t) \\ e(t)
	\end{bmatrix} \right| \leq \beta\left( \left|\begin{bmatrix}
		\delta x(0) \\ e(0)
	\end{bmatrix} \right|,t\right) + \gamma(\|\dot{w}\|) + b(\tau),
\end{equation}
provided that $|\delta x(0)| \leq \nu_1$, $|e(0)|\leq \nu_2$, and $\|\dot{w}\|\leq \nu_3$.
\end{thm}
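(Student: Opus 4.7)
The plan is to establish LISpS on the sampled discrete-time error system \eqref{eq:discrete-time-error-system} and then lift the bound to the continuous-time closed loop \eqref{eq:sampled-data-system} via \cite[Theorem 5]{nevsic1999formulas}, following the roadmap announced at the start of Section~\ref{sec:Ana}. A branch $\bar z \in S$ is fixed throughout, so that Theorems~\ref{thm:plant_LISS} and \ref{thm:algo_LIOS} both yield bounds around the common solution trajectory $z^{*} = \bar z(w)$.

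For the discrete-time analysis I would view \eqref{eq:discrete-time-error-system} as a feedback interconnection of two LISS subsystems (Figure~\ref{fig:iss_diagram}). Theorem~\ref{thm:plant_LISS} bounds $V^k$ by $\beta_x(V^0,k\tau) + \gamma_x^u(\tau)\|\Delta u\| + \gamma_x^d(\|d\|,\tau)$ with $\gamma_x^u \in \mc L$, so the plant's input gain vanishes as $\tau \to \infty$. Theorem~\ref{thm:algo_LIOS} supplies matching bounds for $W^k$ and $\|\Delta u^k\|$ in terms of $\|V\|$ and $\|d\|$. Substituting the algorithm bound into the plant bound yields an implicit inequality of the form $\|V\| \leq \tilde\beta(V^0,W^0) + \gamma_x^u(\tau)\,\gamma_u^x(\|V\|,\tau) + \tilde\gamma(\|d\|,\tau)$, which is precisely the setting covered by Theorem~\ref{thm:dt_coupled_iss}.

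The main obstacle, and the reason only \emph{practical} stability is obtained in the general case, is that the small-gain inequality \eqref{eq:small-gain} can fail near the origin, e.g., when $\gamma_u^x(s,\tau)/s$ is unbounded as $s\to 0^+$. The key idea is to trade the missing small-gain near zero against a residual that shrinks with $\tau$. Pick $\bar\tau$ large enough that $\gamma_x^u(\tau)\gamma_u^x(s,\tau) < s$ holds on an interval $[b(\tau),\tilde\eps]$, where the threshold $b(\tau)$ is defined as the largest $s$ at which this small-gain inequality is tight. Because $\gamma_x^u(\tau)\to 0$ as $\tau\to\infty$, such a crossing point exists for all $\tau$ sufficiently large and satisfies $b(\tau)\to 0$. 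A two-regime argument then follows: on $\|V\|\geq b(\tau)$ the small-gain reasoning of Theorem~\ref{thm:dt_coupled_iss} delivers a genuine LISS estimate, while on the complementary regime $b(\tau)$ is directly absorbed as an offset. Separating the initial-condition, disturbance and offset contributions through the weak triangle inequality \eqref{eq:weak_triangle_ineq} and routine $\mc{KL}$/$\mc K$ manipulations produces the discrete-time LISpS bound on $(\delta x^k,e^k)$.

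The last step is to lift this bound to continuous time. Between sampling instants $u$ is held constant while $w$ varies at rate $\|\dot w\|$, so inter-sample excursions of $\delta x$ are controlled by the continuous-time LISS property Assumption~\ref{ass:system_properties}(v), and those of $z^{*}(t)=\bar z(w(t))$ by the Lipschitz property of the branch furnished by Theorem~\ref{thm:sol_traj}. Together with the elementary bound $d^k \leq \|\dot w\|$, applying \cite[Theorem 5]{nevsic1999formulas} stitches the inter-sample bounds to the sampled-time estimate, absorbs the additional inter-sample terms into $\beta$ and $\gamma$, and delivers the stated inequality with $b(\tau)\to 0$ as $\tau\to\infty$.
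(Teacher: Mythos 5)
Your proposal is correct in its overall architecture and matches the paper's: fix a branch, treat the sampled error dynamics \eqref{eq:discrete-time-error-system} as the feedback interconnection of the two LISS subsystems from Theorems~\ref{thm:plant_LISS} and \ref{thm:algo_LIOS}, close the loop with a small-gain argument, and lift to continuous time via \cite[Theorem 5]{nevsic1999formulas} using Assumption~\ref{ass:system_properties}(v) for inter-sample boundedness. Where you genuinely diverge is in how the failure of \eqref{eq:small-gain} near the origin is converted into the offset $b(\tau)$. You propose a two-regime/threshold argument: define $b(\tau)$ as the supremum of the set where $\gamma_x^u(\tau)\gamma_u^x(s,\tau)\ge s$, run the small-gain estimate on $[b(\tau),\tilde\eps]$, and absorb the residual interval as the practical offset. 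The paper instead proves a preparatory lemma (Lemma~\ref{cor:affine_lios}) that majorizes the nonlinear gain by an affine function, $\gamma_u^x(s,\tau)\le \kappa(\tau)s + a(\tau)$ with $\kappa(\tau)=\gamma_u^x(\bar\veps_2,\tau)/\bar\veps_2\in\L$ and $a(\tau)\to 0$, so the closed loop satisfies a \emph{linear} small-gain condition $\gamma_x^u(\tau)\kappa(\tau)<1$ and the residual $a(\tau)$ enters as an explicit virtual disturbance input; LISpS then follows from the ordinary LISS small-gain theorem applied to the augmented input $(d,a(\tau))$. The paper's route buys a clean reduction to the standard cited small-gain theorem and an explicit formula for $b=\hat\gamma\circ a$; your route is closer to the intuition in the paper's own footnote, but to make it rigorous you would need an IS\emph{p}S-type small-gain theorem (small gain holding only outside a shrinking neighbourhood of the origin) together with a margin condition $\gamma_x^u\circ\gamma_u^x(s)\le s-\rho(s)$ on $[b(\tau),\tilde\eps]$ — the bare strict inequality in a $\limsup$ computation is not enough to solve the implicit bound for $\limsup V^k$ — and a short compactness argument that $b(\tau)\to 0$. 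These are fixable details rather than errors, so your proof is a viable alternative, just technically heavier at the one step the paper engineered its lemma to avoid.
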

\begin{proof}
See Appendix~\ref{ap:main_theorem}.
\end{proof}
Theorem~\ref{thm:main_theorem} is the main result of the paper and states that, if the sampling period $\tau$ is long enough, then the interconnection will become locally practically\footnote{The offset term $b(\tau)$ arises because it is not always possible to upper bound $\gamma_u^x \in \KL$ near the origin as required by small-gain theorems. E.g., $\eta(s,\tau) = e^{-\tau} \sqrt{s}$, which does not satisfy the small-gain condition $\eta(s,\tau) < s$ in an neighbourhood of the origin no matter how large $\tau$ is. However, the interval where it does not hold can be made arbitrarily small, i.e., $\eta(s,\tau) < s$ for all $s\in [\eps(\tau),\infty)$ where $\eps(\tau)\to0$ as $\tau\to\infty$. The term $b(\tau)$ is a consequence of this interval and also becomes arbitrarily small. Further, $b$ goes exactly to zero in many cases, e.g, in Corollary~\ref{corr:simplified-gain} or if $\gamma_u^x$ is convex.} stable\footnote{\blue{The constants $\bar \tau, \nu_1,\nu_2,\nu_3$ and functions $b,\gamma,\beta$ differ for each branch.}}. This makes intuitive sense, since the algorithm is guaranteed to converge if the system remains at steady-state (Assumption~\ref{ass:algo}) which longer sampling periods give time to reach (Assumption~\ref{ass:system_properties}). Input constraints can be satisfied pointwise-in-time via projection operations while all other (e.g., output) constraints are guaranteed to be satisfied asymptotically with bounded violations during transients. 

\begin{rmk}
Theorem~\ref{thm:main_theorem} extends our previous result \cite{9683614} to allow local stability/convergence and is the first stability result for sampled-data FO/FES that admits non-linearly convergent algorithms. The latter is especially significant as constrained or distributed  problems are often solved using primal-dual algorithms which are typically sub-linearly convergent.
\end{rmk}



\section{Illustrative Examples}
In this section, we demonstrate that some of the algorithmic preconditions (Assumption \ref{ass:algo}) in Theorem~\ref{thm:main_theorem} are sharp, in the sense that if they are not satisfied, in general, one cannot expect the algorithm-plant interconnection \eqref{eq:sampled-data-system} to be robust to unmeasured disturbances or even stable. The code for the examples in this section is available in~\cite{gitlab_repo}.

Consider a single-input single-output dynamic plant governed by the second-order differential equation
\begin{equation}
\label{eq:DI}
\ddot{\xi} + 0.5 \dot{\xi} +\xi = u + w
\end{equation}
where $y=\xi$ and $w$ is a disturbance term. The plant is an LTI system of the form $\dot{x} = Ax + B(u+w)$ with $x = (\xi,\dot \xi)$ and is asymptotically stable and satisfies Assumption~\ref{ass:system_properties} with steady-state mapping $h(u,w)= u\!+\!w$, Lyapunov function $V(x,u,w) = 1/2|x- (u+w,0)|_P^2$, where $P$ satisfies $AP + PA^\top + I = 0$, and $\alpha_3 = \lambda_{\min}(P), \alpha_4 = \lambda_{\max}(P)$, and $\alpha_5 = \lambda_{\min}(P^{-1})$. The control objective is encoded as
\begin{equation} \label{eq:simple_opt}
\min_u {\textstyle \frac{1}{2} } | y - y^{\text{ref}}|^2
\quad
\text{s.t.} \quad y = h(u,w), \; u \in [-10, \ 10],
\end{equation}
which models set-point regulation and satisfies Assumption~\ref{ass:strong_reg}. 

\subsection{Closing the loop can lead to instability}
The proximal-gradient controller \cite[\S~III.A]{9683614} for this problem generates control inputs according to the update rule
\begin{align}
\label{eq:prox_grad}
T(z^k,y) =
\mathrm{proj}_{[-10,\,10]}
\big(z^k - \gamma(y-y^{\text{ref}})\big),
\end{align}
with $u^k = z^k$, which satisfies Assumption~\ref{ass:algo} if $\gamma \in (0,1]$, with $\alpha_1 = \alpha_2 = \id$, $W(z,w)= |z-z^*(w)|$, $z^*(w) = y^{\text{ref}} - w$, $\alpha(s) = \left(1-\sqrt{ 1-\gamma(2-\gamma) }\right)s$, and $L_T = \gamma$.

Based on Theorem~\ref{thm:main_theorem}, we expect that the interconnection of \eqref{eq:DI} and \eqref{eq:prox_grad} will only be stable for sufficiently large $\tau$. Since the system \eqref{eq:DI} is exponentially stable and the algorithm \eqref{eq:prox_grad} linearly convergent we can use Corollary~\ref{corr:simplified-gain} to conclude that the interconnection is stable for $\tau > 5.44$. The simulation results match our expectations. Figure~\ref{fig:insta} demonstrates that the interconnection indeed becomes unstable for small $\tau$, that the system is stable for sufficiently large $\tau$, and that Corollary~\ref{corr:simplified-gain} can be conservative as $\tau = 5$ leads to a stable interconnection.

%
\begin{figure}[t]
\centering
\includegraphics[width=\columnwidth]{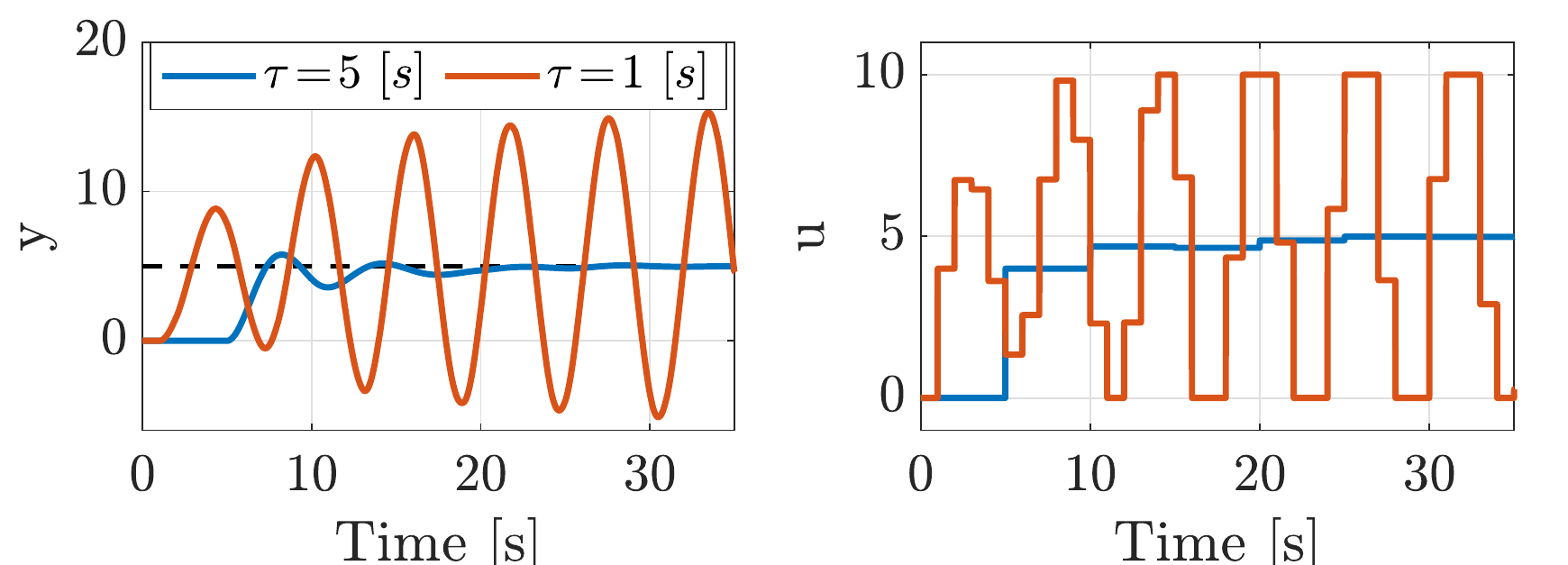}
\caption{Simulations of the sampled-data interconnection of the continuous-time SISO plant \eqref{eq:DI} and the discrete-time algorithm \eqref{eq:prox_grad}, with $\gamma=0.8$, under different choices of the sampling period $\tau$. On the left, the generated output; on the right, the correspondent control input trajectory. 
}
\label{fig:insta}
\end{figure}
%
%
\begin{figure}[t]
\centering
\includegraphics[width = \columnwidth]{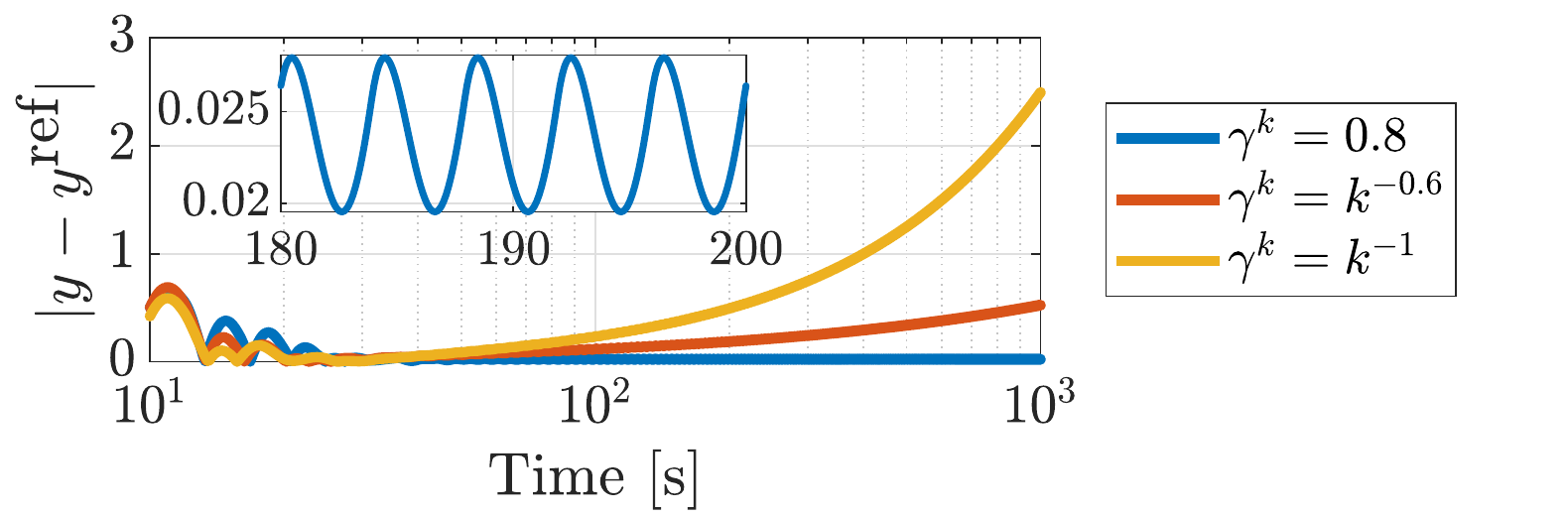}
\caption{Simulations of the sampled-data interconnection of the continuous-time SISO plant \eqref{eq:DI}, subject to the additive disturbance $w(t) \!=\!  5\!\cdot\!10^{-2}t$, and the discrete-time algorithm \eqref{eq:prox_grad}, under different choices of the step size sequence $\gamma^k$. The output tracking error diverges if the sequence vanishes as in \eqref{eq:VSS}, while it stabilizes for constant step sizes. The oscillations are due to the combined effect of time-varying disturbance and sampled-data control.}
\label{fig:VSS}
\end{figure}

\subsection{Vanishing steps do not track solutions trajectories}
Finally, consider the algorithm \eqref{eq:prox_grad}, where the step size is time-varying ($\gamma$ depends on the iteration $k$) and such that
\begin{equation}
\label{eq:VSS}
\textstyle
\gamma^k \geq 0 \quad \forall k, \quad
\sum_{k \in \mathbb N} \gamma^k = \infty,\quad
\sum_{k \in \mathbb N} (\gamma^k)^{2}  < \infty.
\end{equation}
Gradient-based algorithms with vanishing steps of this kind are popular in the context of
optimization and game theory, e.g., for stochastic approximation and gradient tracking. 
Here, a vanishing step size results in a vanishing control gain, and thus unsurprisingly this class of algorithms does not admit a merit function\footnote{
In fact, for $k\rightarrow \infty$ the $\mc K$-function of \eqref{eq:prox_grad} yields $\lim_{k \rightarrow \infty}\alpha_k(\cdot) = \left(1-\sqrt{ 1-\gamma^k(2-\gamma^k) }\right)\cdot= 0$ since  $\lim_{k \rightarrow \infty} \gamma^k = 0$ by \eqref{eq:VSS}.} in the sense of Assumption~\ref{ass:algo}. Hence, ISS cannot be guaranteed in online settings. This is illustrated in Figure~\ref{fig:VSS} which shows that whenever the step sizes in 
 \eqref{eq:prox_grad} vanish to zero, as in \eqref{eq:VSS}, the tracking error diverges.
%

\section{Application Examples}
\subsection{Temperature Regulation in Smart Buildings}
\label{sec:SM}
In this section, we illustrate how FES can be applied to smart building automation. Consider the 5-room single-story office building in Figure~\ref{fig:building}.
Its dynamics are of the form
 \begin{equation}\label{eq:1-build}
     \dot{x} = Ax + B_u u + B_w w + \sum_{i=1}^{n_u} \left(B_{wu,i}w + B_{xu,i}x \right) u_{i},
 \end{equation}
and are generated using the BRCM toolbox~\cite{sturzenegger2014brcm}. The state $x\in \bb{L}^{113}$ contains the temperatures of the rooms and wall layers, floor layers, etc. The control inputs $u\in \bb{L}^{8}$ are an air handling unit (AHU) consisting of air flow (0--1 kg/s), cooling/heating power between $10^2$W and $10^3$W, and one radiator in each room emitting between $0$ and $25$ W/m$^2$. The disturbances $w\in \bb{L}^{10}$ include the solar radiation, the ambient outdoor air temperature, the temperature of the ground, and the internal heat gains coming from building occupants. The measurement $y\in \bb{L}^{7}$ contains the room, outside air, and ground temperatures. The solar radiation and the heat emitted by the buildings occupants are unmeasured. The nonlinearities are caused by the AHU whose control authority depends on the ambient air temperature and room temperatures. 
We model 15 building occupants (providing an internal heat gain of 100W each) by a Markov chain, with a time-dependent probability of being in a given room~\cite{wang2011novel}. 
The solar radiation and ambient temperature are periodic functions yielding temperatures and solar gains representative of central European springtime.
\begin{figure}[t]
  \centering
  \includegraphics[width=.9\columnwidth]{./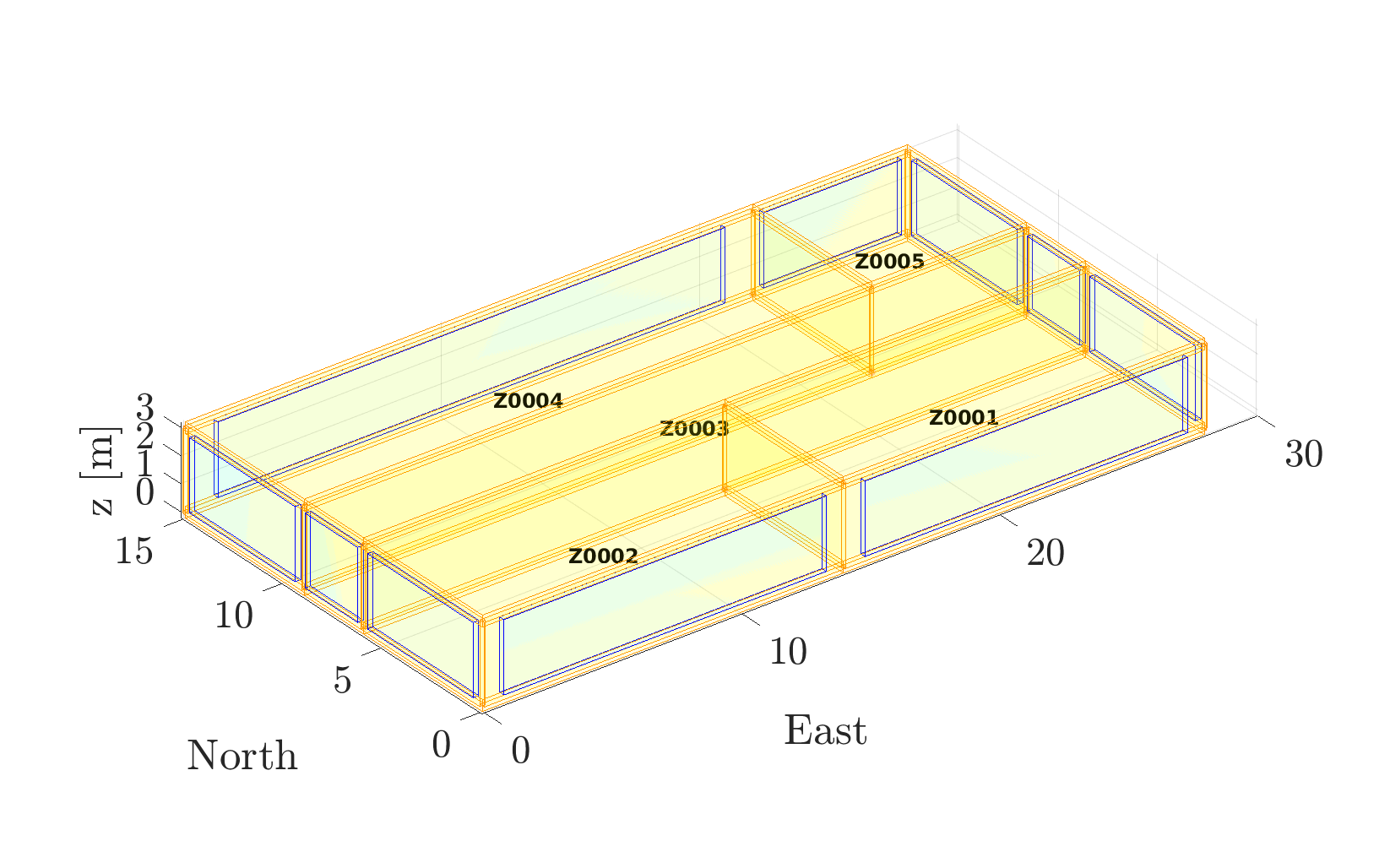}
  \caption{Example building generated via the BRCM toolbox~\protect\cite{sturzenegger2014brcm}.}
  \label{fig:building}
\end{figure}%

Our objective is to minimize energy usage while maintaining the room temperatures within a comfortable range $\mc{T} = [T_{\min}~T_{\max}]$. This control objective is implicitly encoded via an NLP with composite cost function as in \eqref{eq:NLP}, i.e.,
\begin{subequations}\label{eq:building_cost_1}
\begin{align}
&  \phi(\xi,u) =  \frac{\epsilon}{2}\left|
\left[
\begin{smallmatrix} 
\xi \\ u
\end{smallmatrix}
\right]
\right|^2 +  c^\top u, \quad \xi = h(u,w),\\ 
\label{eq:TempCCfunc}
&  \varphi(\xi) = \frac{\eta}{2}\sum_{i=1}^5 \max\{0,\, T_{\min,i} - \xi_i, \, \xi_i - T_{\max,i}\},
  \end{align}
\end{subequations}
where $\eta>0$ is a tuning parameter, $c$ collects the electricity prices, and $T_{\min,i}$, $T_{\max,i}$ are the comfort constraints on the temperature in the $i$-th room.
The quadratic term in $\phi(u,\xi)$ is a regularizer, with typically small tuning parameter $\epsilon$, that improves regularity of the minimizers. 
The purpose of $\varphi$ is to penalize the comfort constraint violations of the room temperatures.
A 1-norm penalty on the violation is used for two reasons. First, the electrical cost of heating is linear in the control input.
Second, $\varphi$ is an exact penalty function, and so for a well-tuned parameter $\eta$, the (disturbance-free) system can be exactly driven within the temperature bounds~\cite{han1979exact}.

Given the control objective \eqref{eq:building_cost_1} and the steady-state sensitivity associated with~\eqref{eq:1-build}, we form a FES controller using the JN algorithm, as explained in Section~\ref{sec:control-strat}.
Given the measurements $y$, the current control input $u$, and the controller state $\xi$, the resulting sampled-data SQP controller sets the next control inputs as the solutions ($d_u$-component) of the QP%
\begin{align}
  \begin{array}{rl}
  \displaystyle
    \min_{d_\xi,d_u,\sigma} & \frac{\epsilon}{2}{\left|
    \left[
    \begin{smallmatrix} 
d_\xi \\ d_u
\end{smallmatrix}
\right] \right|}^2 + \frac{1}{2} d_u^{\top}c + \frac{\eta}{2}\sum_{i=1}^5 \sigma_i\\
    \mathrm{s.t.} & d_\xi = \nabla_u h(u,w) d_u + y-\xi\\
                       & u + d_u \in \mathcal{U}\\
                       & \xi_i + d_{\xi,i} - T_{\max} \leq \sigma_i,
     \quad ~~ \forall i = 1,\dots,5\\
                       & \xi_i + d_{\xi,i} - T_{\min} \geq -\sigma_i,
                       ~~ ~ \forall i = 1,\dots,5\\
                       & \sigma_i \geq 0,
\qquad \qquad \qquad \qquad \forall i = 1,\dots,5
  \end{array}\label{eq:build_jn_step_lp}
\end{align}
where $\sigma_i$ are slack variables that allow to reformulate the nonsmooth cost term $\varphi(\xi+d_{\xi})$ as linear constraints.
\begin{figure}[t]
  \centering
    \includegraphics[width=.9\columnwidth]{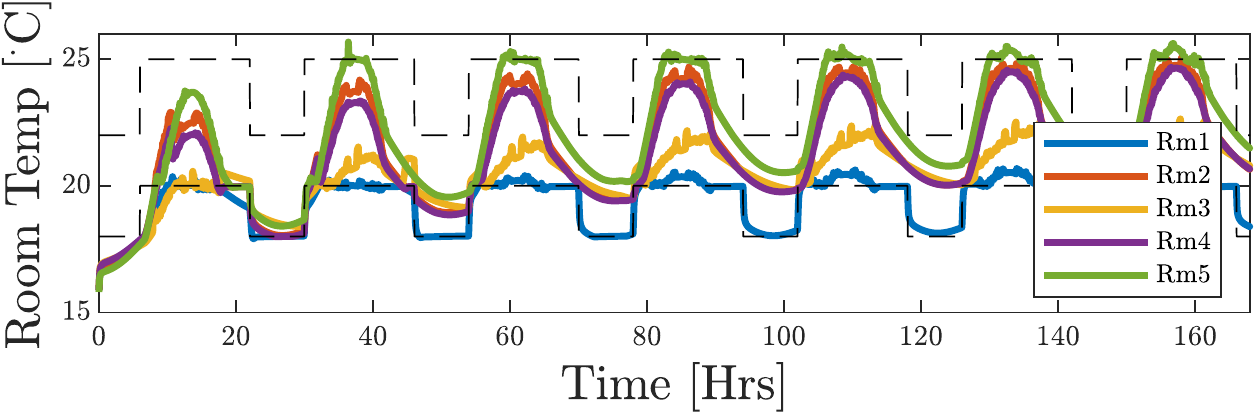}
    \includegraphics[width=.9\columnwidth]{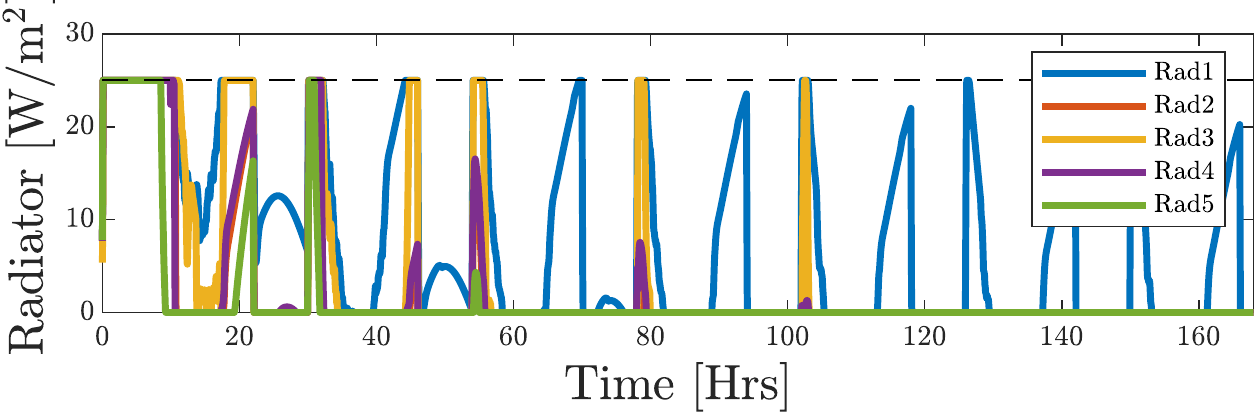}
    \includegraphics[width=.9\columnwidth]{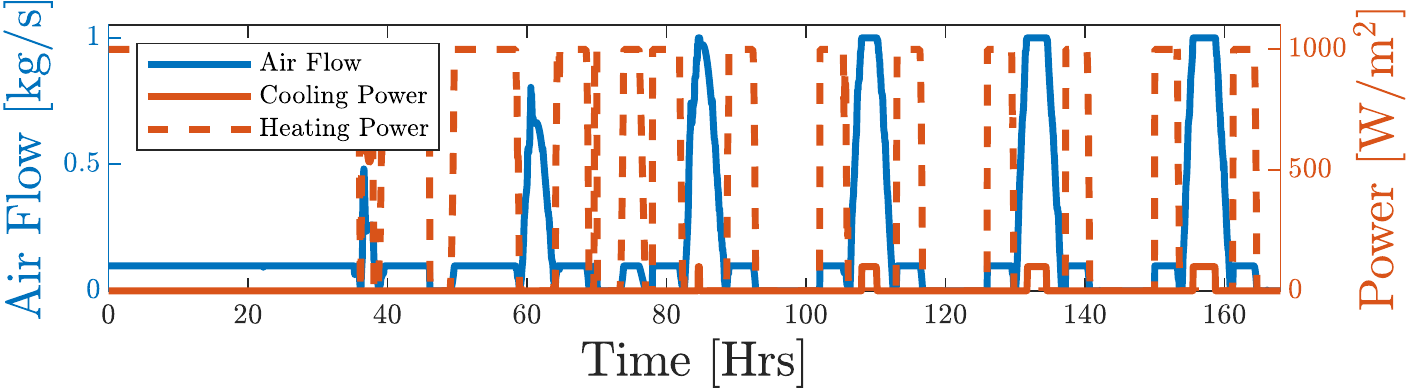}
  \caption{Simulations of the SQP controller \protect\eqref{eq:build_jn_step_lp} on the building dynamics~\eqref{eq:1-build}. 
The comfort temperature (output) constraints are (approximately) satisfied throughout the simulations, while heating and cooling effort is minimized.}
  \label{fig:feedback_opt_results}
\end{figure}
We set $\eta = 5\cdot10^{4}$, $\epsilon = 10^{-5}$, and a discrete time step of $\tau = 3 \  [\text{min}]$.
Simulation results are presented in Figure~\ref{fig:feedback_opt_results}.
The proposed controls are able to keep the rooms between the temperature bounds, with only minor constraint violations.
We compare the controller performance to a hysteresis-based thermostat controller%
, which turns the radiators and AHU heater on when $T_{\text{room}} \leq \frac{T_{\min} + T_{\max}}{2} - 2$ and off when $T_{\text{room}} \geq \frac{T_{\min} + T_{\max}}{2}$. The AHU cooler is turned on when $T_{\text{room}} \geq \frac{T_{\min} + T_{\max}}{2}+2$, and off when $T_{\text{room}} \leq \frac{T_{\min} + T_{\max}}{2}$, with all temperatures in $^\circ$C.
In our example, the SQP controller provides a 27.84\% reduction in constraint violations, and a 32.29\% reduction in total cost as measured by~\eqref{eq:building_cost_1} compared to the hysteresis controller.
The code for this application example is available in~\cite{gitlab_repo}.

\subsection{Competitive Supply Chain Management}
\label{sec:CSC}
Consider a supply chain with $N$ producers $P_i$, labelled by $i \in \mc I :=\{1,\ldots,N\}$, that are supplied by a supplier $S$ and serve a common market $M$, as illustrated in Figure~\ref{fig:supply_chain2}. Each producer orders $o_i$ units of raw materials from the supplier and sells $d_i$ units	 of finished product to the market at price $\sigma_i$. 

Producers are modelled using a simplified version of the model in \cite[Fig.~5a]{spiegler2012control}, where the local stocks of finished product $s_i$ accumulate according to
\begin{equation}
\textstyle
	\dot{s}_i =  l_i - d_i
\end{equation}
with $l_i$ denoting the local production rate. This rate evolves as
\begin{equation} \textstyle
	\dot l_i = -\frac{1}{\tau_i^P}(l_i - o_i)
\end{equation}
where $\tau_i^P$ is the production time constant, and $o_i$ are the orders from the supplier. Each producer maintains a stock of finished product $\bar s_i$ (known as the minimum reasonable inventory \cite{spiegler2012control}) to insulate against shocks using the inventory control law
\begin{equation} \label{eq:lCP}
	o_i = -k_i(s_i - \bar s_i) + d_i,
\end{equation}
where $k_i > 0$ is a tunable control gain.

The common market obeys the linear price/demand curve
\begin{equation} \label{eq:price-curve}
	\bar d_i((\sigma_i,\sigma_{-i}),d_i^\text{w}) = d_i^\text{w} - \beta_i \sigma_i + \textstyle \sum_{i\neq j} \beta_{ij} \sigma_j,
	\quad
	 \forall i \in \mc I
\end{equation}
where $\bar d_i(\sigma,d_i^\text{w})$ is the nominal local demand for producer $P_i$, $d_i^\text{w}$ is a base-line demand, and $\beta_i, \beta_{ij} \geq 0$ are market constants \cite{anderson2010price}. In this model, demand for $P_i$'s product drops if $P_i$ raises its prices $\sigma_i$ and increases when competitors increase theirs $\sigma_{-i}$. The market does not respond instantaneously to changes in prices, instead the true demand $d_i$ evolves according to
\begin{equation} \textstyle
	\dot d_i = -\frac{1}{\tau^M}(d_i - \bar d_i (\sigma,d_i^\text{w})),
\end{equation}
where $\tau_M > 0$ is the time constant of the market.

The overall dynamics of the supply-chain system
 are
\begin{equation*}
\small
\begin{bmatrix}
	\dot s_i\!-\!\bar s_i\\
	\dot l_i\\
	\dot d_i
	\end{bmatrix} = \begin{bmatrix}
		0 & 1 & -1\\
		-\frac{k_i}{\tau_i^P} & -\frac{1}{\tau_i^P} & \frac{1}{\tau_i^P}\\
		0 & 0 & -\frac{1}{\tau^M}
	\end{bmatrix} \begin{bmatrix}
		s_i\!-\!\bar s_i\\l_i\\d_i
	\end{bmatrix} + \begin{bmatrix}
		0 \\ 0 \\ \frac{\bar d_i(\sigma,d_i^\text{w})}{\tau^M}
	\end{bmatrix},
\end{equation*}
which is a collection of linear time invariant (LTI) systems of the form $\dot x_i = A_ix_i + B_iu + Dw_i$, $y_i = C_i x_i$, with state $x_i = (s_i\!-\!\bar s_i, \ l_i, \ d_i)$, $y_i = (l_i, \ d_i)$, $u = (\sigma_i,\sigma_{-i})$, and $w_i = d_i^{\text{w}}$. These LTI systems are stable if the control gains in \eqref{eq:lCP} satisfy $k_i \geq 0$. The system-level dynamics are of the same form in \eqref{eq:ctime-dynamics}, where $f$ collects the LTI dynamics and the outputs $y_i$ are only locally available. Thus, Assumption~\ref{ass:system_properties} is satisfied with steady-state maps
$p_i(\sigma,d_i^\text{w}) = (0, \ \bar d_i(\sigma,d_i^\text{w}), \ \bar d_i(\sigma,d_i^\text{w}))$
and $h_i(\sigma,d_i^\text{w}) = (\bar d_i(\sigma,d_i^\text{w}), \bar d_i(\sigma,d_i^\text{w}))$, for all $i \in \mc I$. 
\begin{figure}[t]
	\centering	\includegraphics[width=0.95\columnwidth]{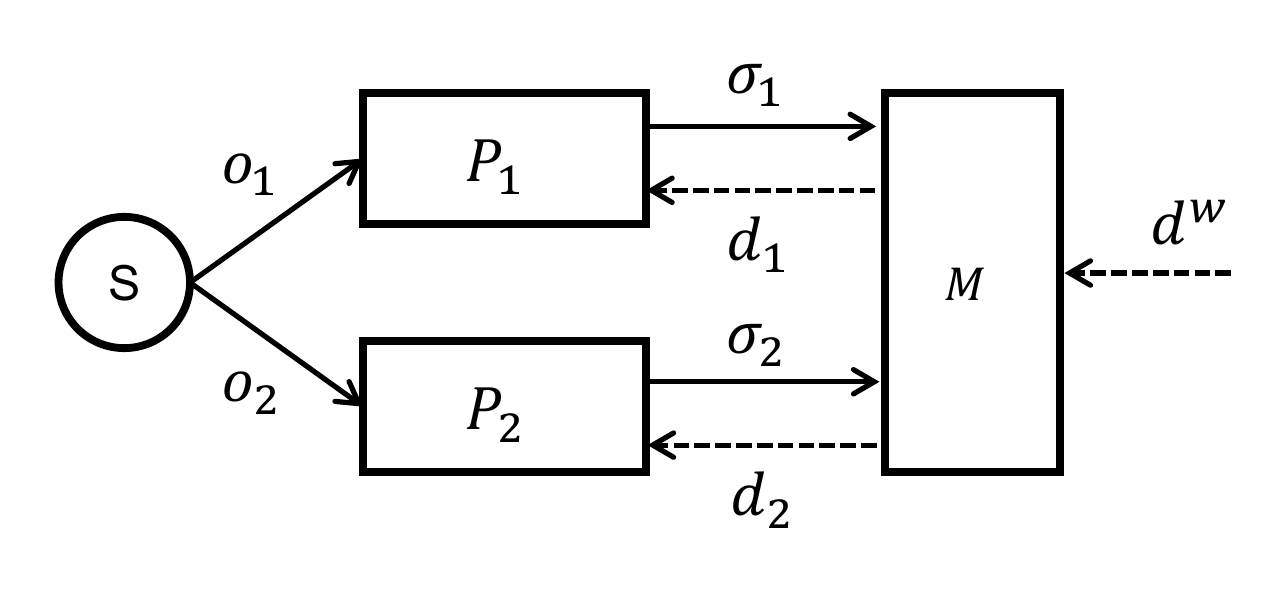}
	\caption{\blue{A supply-chain network with two producers, $P_1$ and $P_2$, sharing a supplier $S$ and a market $M$. Each producers $i$ places an order $o_i$ for raw material form $S$, which is then used to meet the local demand $d_i$. The market $M$ is a dynamical system whose output, namely, the demands $d_i$, is influenced by the producers' selected prices $\sigma_i$ and by other exogenous factors $d^{\text w}$.}}
	\label{fig:supply_chain2}
\end{figure}
 \begin{figure}[t]
  \centering
          \includegraphics[width=\columnwidth]{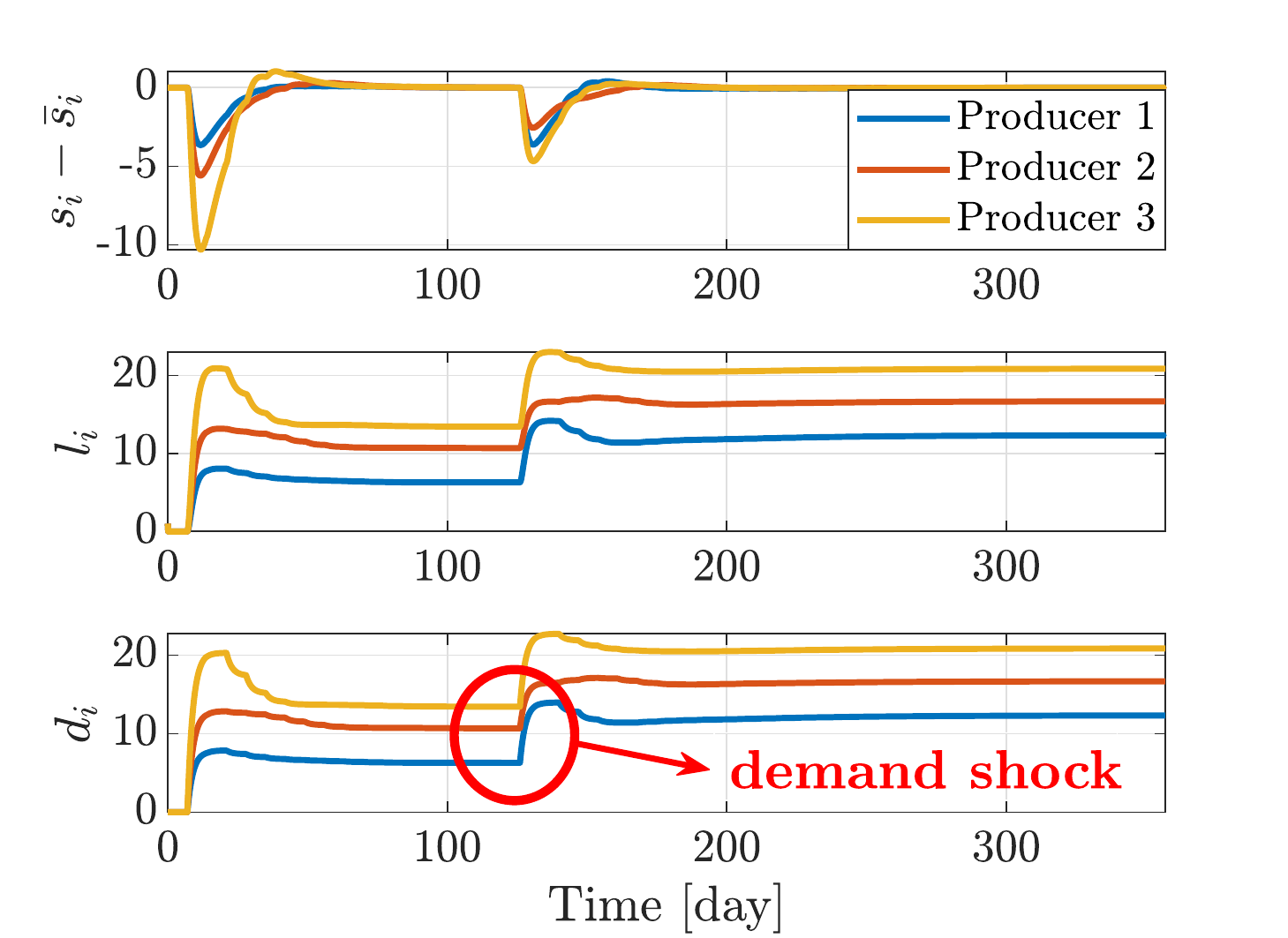}
          \caption{Evolution of the producers' states, i.e., inventory setpoint error ($s_i\!-\!\bar s_i$), production rate ($l_i$), and local demand ($d_i$). The sudden surge of the local demands $d_i$ is the result of a spike of the base-line demand $d_i^{\text w}$.\label{fig:SC_states}}
\end{figure}
\begin{figure}[t]
    \includegraphics[width=\columnwidth]{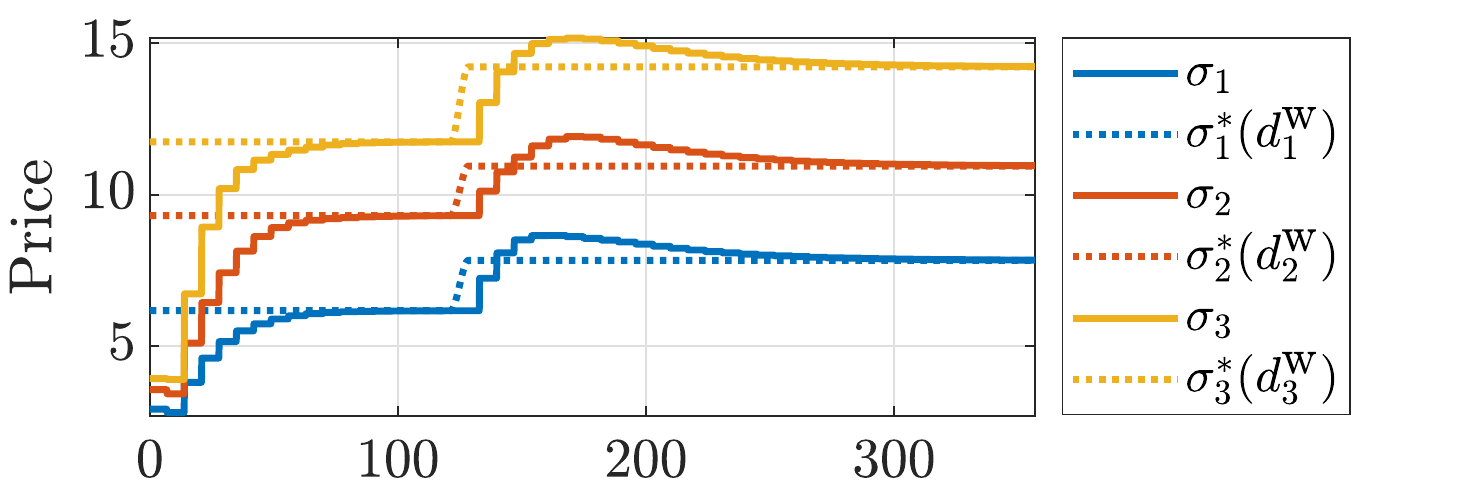}
            \includegraphics[width=\columnwidth]{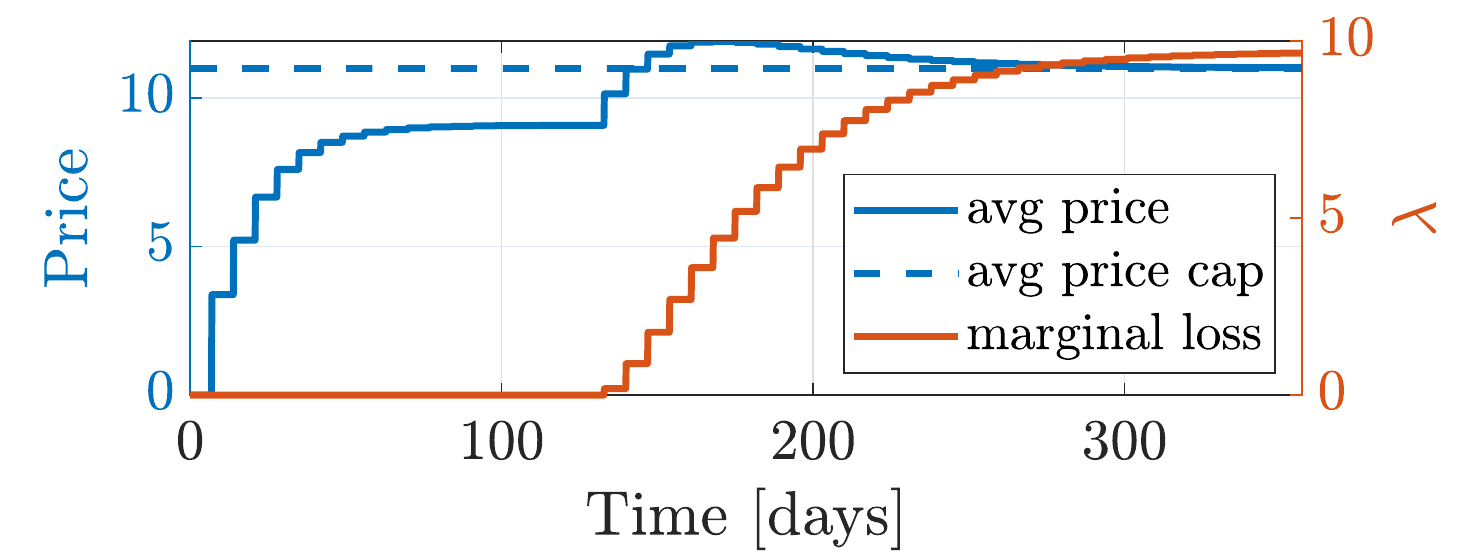}
  \caption{(Top) The prices under the FBS controller (solid lines) track the solution trajectories of the pricing game \eqref{eq:ProdOPT} (dotted line). (Bottom) When the average price exceeds its correspondent cap, the market regulator imposes an additional marginal fee or loss ($\lambda$) to the producers to enforce the constraint.}\label{fig:GNEprices}
\end{figure}

Producers continuously balance local supply and demand using the inventory control law \eqref{eq:lCP} but must choose a price $\sigma_i$ for their product. 
The goal of each producer $P_i$ is to set its prices $\sigma_i$ so local profit is maximized at steady-state operation while the price regulations are respected, i.e.,
\begin{subequations}
\label{eq:ProdOPT}
\begin{align} 
\label{eq:ProdOPTcost}
 \min_{\sigma_i \geq \sigma_i^{\min}} &  \quad
 \underbrace{c_i^\top \bar d_i (\sigma,d_i^\text{w})}_{\text{production cost}}  -\underbrace{\sigma_i^\top  \bar{d}_i(\sigma,d_i^\text{w})}_{\text{sales revenue}} \\[.3em]
  \text{s.t.} 
& \textstyle
\quad \frac{1}{N} \sum_{j \in \mc I} \sigma_j \leq \sigma^{\text{max}}_{\text{avg}}
\label{eq:ProdOPTcnstr2}
\end{align}
\end{subequations}
where $c_i \geq 0$ in \eqref{eq:ProdOPTcost} is the production price for producer $P_i$. The constraints enforce limits on the individuals and average prices, respectively, usually imposed by market regulators (e.g., customers associations). The overall behaviour of the supply chain is coupled through the price/demand curve \eqref{eq:price-curve} as well as the price-cap regulations \eqref{eq:ProdOPTcnstr2}.

In practice, producers do not know the market model \eqref{eq:price-curve} but can only observe its transient outcome, namely, the local demand $d_i$. 
%
Thus, to guide the supply chain to a ``fair" competitive equilibrium (a variational GNE) of the game \eqref{eq:ProdOPT}, we use the FBS controller (Figure~\ref{fig:FBScontr}) developed in Section~\ref{sec:control-strat}.


We consider a pricing game played by $N \!= \!3$ producers, on a single-product market ($\sigma_i \in \mathbb R$), over a year. All the parameters of the supply-chain model and the game are drawn from uniform distributions and fixed over the course of the simulations. The only exception are the baseline demands $d_i^{\text w}$ in \eqref{eq:price-curve}; these suddenly increased by a factor of $3$ at day $120$ to simulate a massive (non-price-related) surge in demand due to, for example, a natural disaster, a cultural event, or the introduction of a new product.

At each sampling period $\tau\!=\!7$ [days], producers measure their local demand $d_i$, receive a marginal fee, or loss, $\lambda$ from the market regulator, and, then, update their prices $\sigma_i$ according to the FBS controller in Figure~\ref{fig:FBScontr}.

We checked numerically that conditions (C1)--(C3) in Section~\ref{sec:control-strat} hold; thus, we can invoke (i) Lemma~\ref{lem:gameReg} to prove that Assumption \ref{ass:strong_reg} is satisfied for the game~\eqref{eq:ProdOPT}, and (ii) Lemma~\ref{lem:FBS} to prove that Assumption \ref{ass:algo} holds for the FBS controller.
In turn, Theorem~\ref{thm:main_theorem} guarantees LISpS of the controlled supply chain system  with respect to exogenous demand fluctuations, under an appropriate choice of the sampling time $\tau$. 

In Figure \ref{fig:SC_states}, we illustrate the evolution of the inventory levels, production rates, and local demands. The local controllers \eqref{eq:lCP} keep the inventories at the desired level despite the initial transient and the sudden demand surge. Moreover, the FBS controller guides the prices to track the variational GNE trajectories of the pricing game \eqref{eq:ProdOPT}, as illustrated in Figure~\ref{fig:GNEprices}~(top). Finally, we note that the sudden spike in demand causes the solution trajectories of the game~\eqref{eq:ProdOPT} to hit the price capping constraints \eqref{eq:ProdOPTcnstr2}. This can be seen in Figure~\ref{fig:GNEprices}~(bottom), in which the average price trajectory exceeds its limit and, in turn, the dual variable (internal state) $\lambda$ of the FBS controller increases to penalize this violation.
Code for this application example is available in~\cite{gitlab_repo}.

\section{Conclusions} \label{ss:conclusions}
Iterative algorithms for solving generalized equations, such as Josephy--Newton, forward-backward splitting, can be used as sampled-data robust feedback controllers for guiding complex unknown dynamical systems to
constrained and economic equilibria.
Under robust stability of the plant, strong regularity of the generalized equation describing the control objective, and robust convergence of the iterative algorithm, the sampled-data algorithm-plant cyber-physical
interconnection is locally input-to-state stable with respect to unmeasured disturbances affecting the plant, provided that the sampling period is appropriately designed.
Illustrative numerical examples in building energy management and supply chain coordination corroborated these theoretical findings.
Future research directions include  incorporating online model (i.e., input-output sensitivities) learning  and real-time constraint satisfaction.

\appendix
\subsection{Proof of Lemma~\ref{lem:gameReg}}
\label{ap:Lemma_gameReg}
Define the auxiliary GE 
\begin{equation} \label{eq:aux_GE}
	\begin{bmatrix}
		F(u,s)\\\tilde b
	\end{bmatrix} + \begin{bmatrix}
0 & \tilde A^\top \\ -\tilde A & 0
\end{bmatrix} \begin{bmatrix} u \\ \lambda
\end{bmatrix} + \begin{bmatrix}
	0 \\ \mc{N}_{\reals^{\tilde m}_{\geq0}} (\lambda)
\end{bmatrix},
\end{equation} where $F$ is defined in \eqref{eq:GNEge}, $\tilde A$ and $\tilde b$ are defined in condition (C3) and $\tilde b$ has $\tilde m$ rows, that results from dualizing all constraints in the game \eqref{eq:Game}. The GE \eqref{eq:aux_GE} is strongly regular if the LICQ (C3), $F$ is Lipschitz w.r.t. $s$, and the strong second-order sufficient condition (SSOSC) \cite[Eq. (1.49)]{izmailov2014newton} holds by \cite[Prop 1.27,1.28]{izmailov2014newton}. Strong monotonicity of $F$ (C1), linearity of the constraints is sufficient for the SSOSC and thus for strong regularity of \eqref{eq:aux_GE} \cite[Prop 1.27,1.28]{izmailov2014newton}. Since the original GE \eqref{eq:GNEge} dualizes a subset of the constraints in \eqref{eq:aux_GE} the solution map of \eqref{eq:GNEge} can be constructed by selecting a subset of the dual variables in \eqref{eq:aux_GE} and is thus also strongly regular.
{\hfill $\blacksquare$}

\subsection{Proof of Lemma~\ref{lmm:JN_method}}
\label{app:JN_method}
By \cite[Theorem 10]{liao2020time} under these assumptions there exists $0 < \tilde \epsilon \leq \bar \epsilon$ such that if $|z - \bar z(w)| \leq \tilde \epsilon$ then there exists $\tilde \eta \in (0,1)$ such that $|\mathbb{T}(z,w) - \bar z| \leq \tilde \eta|z - \bar z|$ which immediately implies Assumption~\ref{ass:algo} (i) and (ii). Assumption~\ref{ass:algo} (iii) follows from the Lipschitz continuity of $(H(z) (\cdot) + \mc{A}(\cdot))^{-1}$ and the fact that $\mathbb{G}$ is $1$-Lipschitz continuous with respect to $s$. 
{\hfill $\blacksquare$}

\subsection{Proof of Proposition~\ref{lmm:SQNE}}
\label{ap:SQNE}
To prove \eqref{eq:sQNE-merit-2}, let us fix $w \in \mc W$ and
define $\zeta(z,w) = |\mathbb{T}(z,w) - z|^2$, we see that (i) $\zeta(z,w) = 0$ if and only if $z \in \mathrm{fix}~\mathbb{T}(\cdot,w)$, (ii) since $\tilde  T(\cdot,w)$ is continuous, by assumption, so is $\zeta(\cdot,w)$, and (iii) $\zeta(\cdot,w)$ is positive definite. Thus, by invoking \cite[Lemma 4.3]{khalil2002nonlinear}, it follows that there exists a $ \K$-function $ \alpha_w$ such that
\begin{equation} \label{eq:SQNE_Khalil}
	  \alpha_w(|z- \bar z|) \leq \zeta(z,w), \quad \bar z \in \fix~\mathbb{T}(\cdot,w).
\end{equation}
Now, since $\mathbb{T}(\cdot,w)$ is $\rho$-SQNE (Definition \ref{def:SQNE}), we have that
\begin{subequations}
\begin{align}
	{|\tilde  T(z,w) - \bar z|}_P^2 & \leq {|z - \bar z|}_P^2 - \rho  {|\tilde  T(\cdot,w)(z) - z|}_P^2\\
	&\leq {|z - \bar z|}_P^2 - \rho \lambda_{\min}(P)   \alpha_w(|z- \bar z|),
	\label{eq:sineq}
\end{align}
\end{subequations}
for some $P \succ 0$, where for the second inequality \eqref{eq:sineq} we used ${|\cdot|}_P \geq \lambda_{\min}(P)|\cdot| $ and \eqref{eq:SQNE_Khalil}, sequentially.
Since $\mathcal W$ is compact by assumption, we can define the $\mc K$-function $\underline \alpha(z) = \min_{w \in \mc W}   \alpha_w (z)$ \cite[Lemma~1]{picallo2021cross}. Finally, letting $\alpha(\cdot) = \rho \lambda_{\min}(P) \underline \alpha(\cdot) \in \K$ completes the proof.
{\hfill $\blacksquare$}

\subsection{Proof of Lemma~\ref{lem:FBS}}
\label{ap:FBS}
We show that conditions (i)-(iii) of Assumption~\ref{ass:algo} hold.\\
(i) It follows by \cite[Prop.~26.1~(iv)~(a)]{bauschke2011convex} that $\fix~\mathbb{T}(\cdot,w) = \zer(\Phi^{-1}\mathbb{G}(\cdot,w)+\Phi^{-1} \mc A) = \zer(\mathbb{G}(\cdot,w)+ \mc A)= S(w)$, for all $w \in \mc W$, where $S(w)$ is a singleton since the pseudo-gradient $\tilde{F}(\cdot,w)$ is strongly monotone by (C1).\\
%
(ii)
It follows by \cite[Prop.~26.1~(iv)~(d)]{bauschke2011convex} that $\mathbb{T}(\cdot,w)$ is $\eta$-averaged with $\eta=2\beta\delta/(4\beta \delta-1)$, for all $w \in \mc W$, since $\Phi^{-1}\mathbb{G}(\cdot,w)$ is $\beta\delta$-cocoercive, with $\beta = 2\tilde \mu/\tilde \ell^{\,2}$, $\Phi^{-1}\mc A$ is maximally monotone, both with respect to the norm $\|\cdot\|_\Phi$, see \cite[\S~III.B]{belgioioso2021semi}.
In turn, every $\eta$-averaged operator is continuous, since nonexpansive, and $\rho$-SQNE, with $\rho=\frac{1-\eta}{\eta}$, by \cite[Prop.~4.35]{bauschke2011convex}. Moreover, $\fix~T(\cdot,w)=S(w)$ is a singleton by part (i) of this proof.
Hence, we can invoke Proposition~\ref{lmm:SQNE} to prove that Assumption~\ref{ass:algo}~(ii) is satisfied, with $W(z,w)={|z-S(w)|}_\Phi^2$, $\alpha_1 = \lambda_{\min}(\Phi)$, $\alpha_2 = \lambda_{\max}(\Phi)$, and $\alpha = \rho \lambda_{\min}(\Phi) \underline \alpha $, for some $\underline \alpha \in \mc K$ and $\Phi$ as in \eqref{eq:Phi}.\\
(iii) By \cite[Prop.~23.8]{bauschke2011convex}, the resolvent ${(\id+ \Phi^{-1} \mc A)}^{-1}$ in \eqref{eq:pFB} is (firmly-)nonexapansive, since $\Phi^{-1} \mc A$ is maximally monotone. By combining this result with the $\ell$-Lipschitz continuity of $F$, i.e., (C2), we can show that Assumption~\ref{ass:algo}~(iii) is satisfied with $L_{T}:= \ell/\lambda_{\min}(\Phi)$.
{\hfill $\blacksquare$}

\subsection{Explicit expressions for the functions in \eqref{eq:discrete-time-error-system}}%
\label{app:expressions}%
\vspace*{-1.5em}
\begin{align*}
	&\mc{G}_1(x^k,u^k,\Delta u^k,\tilde w^k) = \\
 	&\hspace*{10em} \hspace*{-3em} \psi(t^k,x^k,u^k, \tilde w^k) - p(u^k+\Delta u^{k},w^{k+1})\\
&\mc{G}_2(e^k,\delta x^k,\tilde w^k) = \hat{T}(e^k,\delta x^k,\tilde w^k) - \bar z(w^{k+1})
\\
&	\mc{H}(e^k,\delta x^k,\tilde w^k) = q(\hat{T}(e^k,\delta x^k,\tilde w^k))  - q(e^k + \bar z(w^k))\\
&
\hat{T}(e^k,\delta x^k, \tilde w^k) = T(e^k + \bar z(w^k), \\ &\hspace*{3.5em} g(\psi(\delta x^k + p(q(z^k),w^k) , q(e^k + \bar z^k),\tilde w^k),w^{k+1})
\end{align*}
%
%

\subsection{Proof of Theorem~\ref{thm:plant_LISS}}
\label{ap:plant_LISS}
We begin with a preparatory Lemma.
\begin{lmm} \label{lmm:V_decrease}
Let Assumption~\ref{ass:system_properties} hold. Then
\begin{equation}
	V(x^{k+1},v ,w^{k+1}) \leq e^{-\alpha_5 \tau} V(x^k,v ,w^k) + \sigma_1(d^k),
\end{equation}
where $x^{k+1} = \psi(t^k,x^k,v,w)$.
\end{lmm}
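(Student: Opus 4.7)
The plan is to reduce the lemma to a standard comparison-lemma argument applied to the ISS-Lyapunov differential inequality from Assumption~\ref{ass:system_properties}(v), specialized to the sampling interval $[t^k,t^{k+1}]$ over which the input is held at the constant value $v$.

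First I would define $\tilde V(t) := V(x(t),v,w(t))$ along the continuous-time trajectory satisfying $\dot x(t) = f(x(t),v,w(t))$ with $x(t^k)=x^k$, so that $\tilde V(t^{k+1}) = V(x^{k+1},v,w^{k+1})$ and $\tilde V(t^k) = V(x^k,v,w^k)$. Assumption~\ref{ass:system_properties}(v), applied with constant input $u=v$, yields the almost-everywhere inequality
\begin{equation*}
    \dot{\tilde V}(t) \leq -\alpha_5 \tilde V(t) + \sigma_1(|\dot w(t)|),
\end{equation*}
valid wherever $\tilde V(t) \leq \epsilon_x$ and $|\dot w(t)| \leq \epsilon_w$. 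Since $\sigma_1 \in \K$ is monotone and $|\dot w(t)|\leq d^k$ on the interval by \eqref{eq:d_def}, the right-hand side can be further upper-bounded by $-\alpha_5 \tilde V(t) + \sigma_1(d^k)$.

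Next I would integrate this scalar linear differential inequality via the comparison lemma (equivalently, the integrating factor $e^{\alpha_5 t}$) from $t^k$ to $t^{k+1} = t^k + \tau$, obtaining
\begin{equation*}
    \tilde V(t^{k+1}) \leq e^{-\alpha_5 \tau}\, \tilde V(t^k) + \tfrac{1 - e^{-\alpha_5 \tau}}{\alpha_5}\, \sigma_1(d^k).
\end{equation*}
The prefactor $(1-e^{-\alpha_5\tau})/\alpha_5$ is bounded above by $1/\alpha_5$, a fixed constant, and since class $\K$ is closed under positive scaling this term can be absorbed into a redefinition of $\sigma_1$, producing the claimed bound.

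The main technical subtlety is ensuring that the local conditions $\tilde V(t)\leq \epsilon_x$ and $|\dot w(t)|\leq \epsilon_w$ hold throughout the entire sampling interval, not merely at its endpoints, so that the differential inequality can be invoked. This is a standard forward-invariance bootstrap: whenever $\tilde V(t^k)$ and $d^k$ are sufficiently small (as guaranteed by the hypotheses of Theorem~\ref{thm:plant_LISS} inside whose proof this lemma is used), the same integrated inequality shows $\tilde V(t)$ cannot exit the admissible region on $[t^k,t^{k+1}]$, which closes the argument.
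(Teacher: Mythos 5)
Your proposal is correct and follows essentially the same route as the paper's proof: apply the comparison principle to the ISS-Lyapunov inequality of Assumption~\ref{ass:system_properties}(v) with the input frozen at $v$, bound $\sigma_1(|\dot w(s)|)$ by $\sigma_1(d^k)$ over the sampling interval, and integrate the resulting linear differential inequality. If anything you are slightly more careful than the paper, which silently treats the prefactor $(1-e^{-\alpha_5\tau})/\alpha_5$ as if it were $(1-e^{-\alpha_5\tau})\leq 1$ rather than absorbing the $1/\alpha_5$ into $\sigma_1$ as you do, and which defers the forward-invariance of the region $\{V\leq\epsilon_x\}$ to the proof of Theorem~\ref{thm:plant_LISS} rather than addressing it inside the lemma.
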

\begin{proof}
For any $t \in [t^k,~~t^{k+1}]$, Assumption~\ref{ass:system_properties} and the comparison principle \cite[Lemma 3.4]{khalil2002nonlinear} imply that
\begin{align}
  &V(x(t),v,w(t)) \leq e^{-\alpha_5 (t-t^k)} V^k + \int_{t^k}^t e^{(s-t)} \sigma_1(|\dot w(s)|)~ds \nonumber \\
  &\leq e^{-\alpha_5 (t-t^k)} V^k + \sigma_1\left(\esssup_{s\in T} |\dot w(s)|\right) \int_{t^k}^t e^{-(t-s)}~ds\nonumber\\
 & = e^{-\alpha_5 (t-t^k)} V^k + (1-e^{-\alpha_5 (t-t^k)})\sigma_1(d) \nonumber
\\
  &\leq \blue{e^{-\alpha_5 (t-t^k)} V^k +  \sigma_1(d)}, \hspace*{6em} \label{eq:boundedness}
\end{align}
where $V^k \!= \! V(x^k,v,w^k)$. Let $t \!=\! t^{k+1}$ and conclude.
\end{proof}

With this result in hand, we proceed to show LISS of the plant. Let $V^k = V(x^k,u^k,w^k)$, using Lemma~\ref{lmm:V_decrease} with $v = u^{k+1}$ implies that
\begin{align}
	&V^{k+1} \leq e^{-\alpha_5\tau} V(x^k,u^{k+1},w^k) + \sigma_1(d^k)\nonumber\\
	&\leq e^{-\alpha_5 \tau} V^k + e^{-\alpha_5 \tau}|V(x^k,u^{k+1},w^k)-V^k| + \sigma_1(d^k) \nonumber\\
	&\leq e^{-\alpha_5 \tau} V^k+ e^{-\alpha_5 \tau}L_V |u^{k+1} - u^k| + \sigma_1(d^k) \label{eq:pf5}
\end{align}
where $L_V>0$ is the Lipschitz constant of $V$ (which exists since $V$ is continuously differentiable and $\mc{U}$ is compact). Continuing, we note that $e^{-\alpha_5 \tau} \in (0,1)$ for any $\tau > 0$ and thus by the comparison principle, see e.g, \cite[Example 3.4]{jiang2001input},
\begin{align} 
V^k &\leq (e^{-\alpha_5\tau})^k V^0 + \frac{e^{-\alpha_5\tau} L_V}{1-e^{-\alpha_5\tau}} \|\Delta u\| + \frac{e^{-\alpha_5\tau}}{1-e^{-\alpha_5\tau}} \sigma_1(\|d\|) \nonumber\\
& \leq e^{-k\alpha_5\tau} V^0 + \gamma_x^u(\tau) \|\Delta u\| + \gamma_x^d(\|d\|,\tau)\label{eq:pf7}
\end{align}
where $\gamma_x^u(\tau) = \frac{e^{-\alpha_5\tau} L_V}{1-e^{-\alpha_5\tau}}$ and $\gamma_x^d(s,\tau)= \frac{e^{-\alpha_5\tau}}{1-e^{-\alpha_5\tau}} \sigma_1(s)$.

Assumption~\ref{ass:system_properties} requires that $V(x(t),u(t),w(t)) \leq \epsilon_x$. We first show this holds at the sampling instants, i.e., $V(x^k,u^k,w^k) \leq \epsilon_x$ then demonstrate inter-sample satisfaction under the conditions
\begin{gather}
	V^0 \leq 0.25 \eps_x,~~\|u\| \leq 0.125 \gamma_x^u(\tau)^{-1} \eps_x,~~\|d\|\leq \eps_w \nonumber\\
	\gamma_x^d(\|d\|,\tau) \leq 0.125 \eps_x \text{ and } \sigma_1(\|d\|)\leq 0.5 \eps_x \label{eq:Vres}
\end{gather}

We proceed by induction to show that $V^k \leq 0.5 \eps_x$. For $k = 0$ its clear that $V^0 \leq 0.25 \eps_x \implies V^0 \leq 0.5\eps_x$. Next assume \eqref{eq:pf5} holds up to $k - 1$, it follows that
\begin{align*}
	V^k &\leq e^{-k\alpha_5\tau} V^0 + \gamma_x^u(\tau) \|\Delta u\| + \gamma_x^d(d,\tau)\\
	&\leq 0.25  \eps_x + \gamma_x^u(\tau)  \frac{0.125}{\gamma_x^u(\tau)} \eps_x  + 0.125 \eps_x =  0.5\eps_x
\end{align*}
as required. Between sampling instants by Lemma~\ref{lmm:V_decrease} we have that for all $t \in [t^k,t^{k+1}]$
\begin{align*}
V(x(t),u^k,w(t)) &\leq e^{-\alpha_5 t} V^k + \sigma_1(d^k) \\
& \leq \max\{2V^k,2\sigma_1(d^k)\}\\ &\leq \max\{2\cdot 0.5 \epsilon_x,2 \cdot 0.5 \epsilon_x\} = \epsilon_x,
\end{align*}
using \eqref{eq:Vres}. Thus we can define $\pi_1 = 0.125 \gamma_x^u(\tau)^{-1}$, and $\pi_2(s) = \max\{s,8\gamma_x^d(s,0),2\sigma_1(s)\}$. Letting $\beta_x(s,r) = e^{-\alpha_5 r} s$ completes the proof (see Definition~\ref{def:LISS}). {\hfill $\blacksquare$}





\subsection{Proof of Theorem~\ref{thm:algo_LIOS}}
\label{ap:algo_LIOS}
We begin with two preparatory lemmas that are used to bound the disturbances and the distance between steps on and off the steady-state manifold. The first is an immediate consequence the bound on $\|\dot w\|$ in Assumption~\ref{ass:system_properties} and the properties of integrals.
\begin{lmm} \label{lmm:Delta_w_bound}
Under Assumption~\ref{ass:system_properties} $|w^{k+1} - w^k| \leq \tau d^k~\forall k \geq 0$.
\end{lmm}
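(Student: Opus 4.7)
The plan is to express the increment $w^{k+1}-w^k$ as a definite integral of $\dot w$ over the sampling interval and then bound that integral by the essential supremum of the integrand times the length of the interval. Concretely, by Assumption~\ref{ass:system_properties}(iii), $w$ is continuously differentiable with $\dot w \in \sig^{n_w}$, so the fundamental theorem of calculus applies componentwise and yields
\begin{equation*}
w^{k+1} - w^k \;=\; \int_{t^k}^{t^{k+1}} \dot w(s)\,ds.
\end{equation*}

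Taking the Euclidean norm on both sides and applying the standard vector-valued integral inequality gives $|w^{k+1}-w^k| \leq \int_{t^k}^{t^{k+1}} |\dot w(s)|\,ds$. Since $|\dot w(s)|$ is bounded almost everywhere on $[t^k,t^{k+1}]$ by the essential supremum $d^k = \esssup_{s\in[t^k,t^{k+1}]}|\dot w(s)|$ (by definition \eqref{eq:d_def}), the integral is at most $d^k(t^{k+1}-t^k)=\tau d^k$, which is exactly the claim. There is no real obstacle: the proof is a one-line application of the fundamental theorem of calculus together with the definitions of $d^k$ and $\tau$, and the finiteness of $\|\dot w\|$ in Assumption~\ref{ass:system_properties}(iii) is what makes the integral well defined.
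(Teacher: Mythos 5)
Your proof is correct and matches the paper's approach: the paper simply notes that the lemma is ``an immediate consequence of the bound on $\|\dot w\|$ in Assumption~\ref{ass:system_properties} and the properties of integrals,'' which is precisely the fundamental-theorem-of-calculus argument you spell out. No issues.
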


\begin{lmm} \label{lmm:T_Ttilde_bound}
Given Assumptions~\ref{ass:system_properties}-\ref{ass:algo}, there exists $\sigma_2\in \KL$, $\sigma_3\in \K$ such that
\begin{equation}
	|\mathbb{T}(z^k,w^{k+1}) - T(z^k,y^{k+1})| \leq \sigma_2(V^k,\tau) + \sigma_3(d^k).
\end{equation}
where $\mathbb{T}(z,w) = T(z,h(q(z),w))$, provided that $V^k = V(x^k,u^k,w^k) \leq \eps_x$ and $d \leq \eps_w$.
\end{lmm}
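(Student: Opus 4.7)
The plan is to chain three Lipschitz estimates (on $T$, on $g$, and on the Lyapunov-based state-to-steady-state bound) and then apply Lemma~\ref{lmm:V_decrease} to turn the resulting state deviation into decaying functions of the prior Lyapunov value and the disturbance.

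First, I would unfold the definitions: $\mathbb{T}(z^k,w^{k+1}) = T(z^k, h(q(z^k),w^{k+1}))$ with $h(q(z^k),w^{k+1}) = g(p(q(z^k),w^{k+1}),w^{k+1})$, while the actual algorithm uses $T(z^k, y^{k+1})$ with $y^{k+1} = g(x^{k+1},w^{k+1})$. Applying the Lipschitz property of $T$ in its second argument from Assumption~\ref{ass:algo}~(iii) and the global $L_g$-Lipschitz property of $g$ from Assumption~\ref{ass:system_properties}~(ii) yields
\begin{equation*}
	|\mathbb{T}(z^k,w^{k+1}) - T(z^k,y^{k+1})| \le L_T L_g \, |x^{k+1} - p(q(z^k),w^{k+1})|.
\end{equation*}

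Next, I would bound the state error via the Lyapunov function. Since $u^k = q(z^k)$ is held constant over $[t^k, t^{k+1}]$ by the zero-order hold, I can apply Lemma~\ref{lmm:V_decrease} with $v = u^k$ to get
\begin{equation*}
	V(x^{k+1},u^k,w^{k+1}) \le e^{-\alpha_5 \tau} V^k + \sigma_1(d^k),
\end{equation*}
which is valid as long as $V^k \le \eps_x$ and $\|d\| \le \eps_w$ so that Assumption~\ref{ass:system_properties}~(v) applies along the sampling interval (exactly the hypothesis stated in the lemma). The lower bound $\alpha_3 |x - p(u,w)|^2 \le V(x,u,w)$ then gives
\begin{equation*}
	|x^{k+1} - p(u^k,w^{k+1})| \le \sqrt{\tfrac{1}{\alpha_3} \bigl( e^{-\alpha_5 \tau} V^k + \sigma_1(d^k) \bigr)}.
\end{equation*}

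Finally, using the sub-additivity bound $\sqrt{a+b} \le \sqrt{a} + \sqrt{b}$ for $a,b \ge 0$ separates the terms, and I would define
\begin{equation*}
	\sigma_2(V,\tau) = L_T L_g \sqrt{\tfrac{e^{-\alpha_5 \tau} V}{\alpha_3}}, \qquad \sigma_3(d) = L_T L_g \sqrt{\tfrac{\sigma_1(d)}{\alpha_3}},
\end{equation*}
which are of class $\KL$ and $\K$ respectively (monotonicity and continuity in $V$ and $d$ inherited from the square root and from $\sigma_1 \in \K$, decay in $\tau$ from the exponential). This gives the claimed bound. There is no real obstacle here: the argument is a routine composition of the algorithmic output-Lipschitz property, the plant output-Lipschitz property, and the ISS-type decay of $V$ provided by Lemma~\ref{lmm:V_decrease}; the only subtlety is to keep the hypotheses $V^k \le \eps_x$ and $\|d\| \le \eps_w$ in play so that Assumption~\ref{ass:system_properties}~(v) is applicable across the sampling interval.
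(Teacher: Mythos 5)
Your proposal is correct and follows essentially the same route as the paper: unfold $\mathbb{T}$ and $y^{k+1}$, chain the Lipschitz constants $L_T$ and $L_g$ to reduce the claim to bounding $|x^{k+1}-p(u^k,w^{k+1})|$, invoke Lemma~\ref{lmm:V_decrease} with the held input $v=u^k$, and convert back through the quadratic lower bound on $V$. The only cosmetic difference is that you split the square root via $\sqrt{a+b}\leq\sqrt{a}+\sqrt{b}$, whereas the paper applies the generic weak triangle inequality \eqref{eq:weak_triangle_ineq} for $\K$-functions (picking up harmless factors of $2$); both yield valid $\sigma_2\in\KL$ and $\sigma_3\in\K$.
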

\begin{proof}
We adopt the shorthand notation $x^k = x$, $x^{k+1} = x^+$ etc. To begin, note that
\begin{align}
&|\mathbb{T}(z,w^+) - T(z,y^+)| = |T(z,h(q(z),w^+)) - T(z,y^+)| \nonumber\\
& = |T(z,g(p(u,w),w^+)) - T(z,g(x^+,w^+))| \nonumber\\
&\leq L_T |g(x^+,w^+) - g(p(u,w^+),w^+)| \nonumber\\
&\leq L_T L_g |x^+ - p(u,w^+)| \label{eq:pf1}
\end{align}
where we have used $L_T > 0$ and $L_g > 0$ Lipschitz continuity of $T$ (Assumption~\ref{ass:algo}) and of $g$ (Assumption~\ref{ass:system_properties}), respectively. Next we invoke Lemma~\ref{lmm:V_decrease} to show that
\begin{align*}
|x^+ - p(u,w^+)| &\leq \alpha_3^{-1}\circ V(x^+,u,w^+)\\
&\leq \alpha_3^{-1}\left( e^{-\alpha_5\tau} V(x,u,w) + \sigma_1(d) \right)\\
&\leq \alpha_3^{-1}\left( 2e^{-\alpha_5\tau} V(x,u,w)\right)  + \alpha_3^{-1}\left(2\sigma_1(d) \right)
\end{align*}
whenever $V(x,u,w) \leq \eps_x$ and $d\leq \eps_w$. Combining the above inequality with \eqref{eq:pf1} yields
\begin{equation}
	|\mathbb{T}(z,w^+) - T(z,y^+)| \leq \sigma_2(V,\tau) + \sigma_3(|d|)
\end{equation}
as claimed, with $\sigma_2(s,\tau) = L_T L_g\alpha_3^{-1}\left( 2e^{-\alpha_5 \tau}s \right)$ and $\sigma_3 = L_T L_g \alpha_3^{-1} \circ 2 \sigma_1$.
\end{proof}

Now, we are ready to proceed and show LISS of \eqref{eq:sys2e}.

We begin with
\begin{multline} \label{eq:pf12}
W^{k+1} = [W(\mathbb{T}(z^k,w^{k+1}),w^{k+1})]  \\+ [W(z^{k+1},w^{k+1}) - W(\mathbb{T}(z^k,w^{k+1}),w^{k+1})]
\end{multline}
Focusing on the first term, by virtue of Assumption~\ref{ass:algo} we have, if the restriction $|z^k -\bar z^{k+1}| \leq \eps_z$ holds, that
\begin{align*}
W(\mathbb{T}(z^k,w^{k+1}),w^{k+1}) \leq W(z^k,w^{k+1}) - \alpha(|z^k - \bar z^{k+1}),
\end{align*}
where $\bar z^{k+1} = \bar z(w^{k+1})$. Now consider
\begin{align}
\alpha\left(\frac12|z^k - \bar z^k|\right) &= \alpha\left(\frac12|z^k - \bar z^{k+1} + \bar z^{k+1} - \bar z^k|\right) \nonumber\\
&\leq \alpha(|z^k - \bar z^{k+1}|) + \alpha(L_z/2 |w^{k+1} - w^k|),\nonumber
\end{align}
where we have used the weak triangle inequality and Lipschitz continuity of $\bar z$. Rearranging the last line yields that
\begin{equation}
	\alpha(|z^k - \bar z^{k+1}|) \geq \tilde\alpha(|e^k|) - \tilde \alpha(L_z|\Delta w^k|),
\end{equation}
where $\tilde \alpha = \alpha \circ 0.5 \in \Kinf$. Therefore
\begin{align}
	&W(\mathbb{T}(z^k,w^{k+1}),w^{k+1}) \leq W(z^k,w^{k+1}) - \alpha(|z^k - \bar z^{k+1}|) \nonumber\\ 
    &\leq W(z^k,w^{k+1}) - \tilde \alpha(|e^k|) + \tilde \alpha(L_z |\Delta w|^k) \nonumber\\
    & \leq W^k + |W(z^k,w^{k+1}) -W^k|  - \tilde \alpha(|e^k|) + \tilde \alpha(L_z |\Delta w|^k) \nonumber \\
    & \leq W^k - \tilde\alpha(|e^k|) + (\omega_1 + \tilde \alpha \circ L_z~\id)(|\Delta w^k|) \nonumber\\
    & \leq W^k - \tilde\alpha(|e^k|) + ((\omega_1 + \tilde \alpha \circ L_z~\id) \circ \tau~\id)(d^k) \label{eq:pf11}
\end{align}
where $\omega_1\in \K$ exists since $W$ is uniformly continuous in $w$ and the last line uses Lemma~\ref{lmm:Delta_w_bound}. Uniform continuity holds by the Heine–Cantor theorem as $W$ is continuous and $\{z~|~|z-\bar z(w)|~~\forall w\in \mc{W}\} \times \mc{W}$ is compact.

Next, we focus on the second term in \eqref{eq:pf12}, we see that
\begin{align*}
\Delta W & = |W(z^{k+1},w^{k+1}) - W(\mathbb{T}(z^k,w^{k+1}),w^{k+1})|\\ 
& \leq \omega_1(|z^{k+1} -\mathbb{T}(z^k,w^{k+1})|)\\
& = \omega_1(|T(z^k,y^{k+1}) -\mathbb{T}(z^k,w^{k+1})|).
\end{align*}
Continuing, we invoke Lemma~\ref{lmm:T_Ttilde_bound} to show that
\begin{align}
\Delta W &\leq \omega_1(\sigma_2(V^k,\tau) + \sigma_3(d^k)) \nonumber\\
&\leq \omega_1(2\sigma_2(V^k,\tau)) + \omega_1(2\sigma_3(d^k)) \nonumber\\
&\leq \omega_2(V^k,\tau) + \omega_3(d^k), \label{eq:pf13}
\end{align}
where $\omega_2 = \omega_1 \circ 2 \sigma_2 \in \KL$ and $\omega_3 = \omega_1\circ 2\sigma_3 \in \K$, subject to the restrictions $V^k\leq \eps_x$ and $d^k \leq \eps_w$.

Combining \eqref{eq:pf11} and \eqref{eq:pf13} to bound \eqref{eq:pf12} we obtain that
\begin{multline*}
	W^{k+1} - W^k \leq - \tilde\alpha(|e^k|) +  ((\omega_1 + \tilde \alpha \circ L_z~\id) \circ \tau~\id)(d^k)\\+ \omega_2(V^k,\tau) + \omega_3(d^k).
\end{multline*}
Defining $\omega_4 = (\omega_1 \circ \tau  + \tilde \alpha \circ L_z \tau  + \omega_3) \in \K$ we obtain
\begin{equation} \label{eq:pf14}
	W^{k+1} - W^k \leq - \tilde\alpha(|e^k|) + \omega_2(V^k,\tau) + \omega_4(d^k).
\end{equation}

Next we focus on the restrictions required for \eqref{eq:pf14} to hold, these are $V^k \leq \eps_x$, $d^k\leq \eps_w$ and $|z^k - \bar z^{k+1}| \leq \eps_z$. The first two follow from the assumptions. The third expression can be bounded as follows
\begin{align*}
|z^k - \bar z^{k+1}| &= |z^k - \bar z^{k} + \bar z^k - \bar z^{k+1}|\\
&\leq |z^k - \bar z^{k}| + |\bar z^k - \bar z^{k+1}| \leq |e^k| + L_z |\Delta w^k| \\
& \leq |e^k| + L_z\tau d^k \leq \max\{2 |e^k|,2L_z\tau d^k\},
\end{align*}
and thus a sufficient condition for $|z^k - \bar z^{k+1}| \leq \eps_z$ is $|e^k| \leq 0.5\eps_z$ and $d^k \leq \frac{0.5 \eps_z}{\tau L_z}$.

Thus the dissipation inequality \eqref{eq:pf14} holds in a neighbourhood of the origin given by $|e^k|\leq 0.5 \eps_z$, $d^k \leq \min\{\frac{0.5 \eps_z}{\tau L_z},\eps_w\}$, and $V^k \leq \eps_x$, and the system is locally asymptotically stable with zero inputs (0-LAS) and thus LISS\cite{jiang2004nonlinear}, i.e., there exists $\beta_z,\gamma_z^x,\gamma_z^d, \veps_1,\veps_2$ and $\veps_3$ such that
\begin{equation} \label{eq:lios-3}
	W^k\leq \beta_z(W^0,k) + \gamma_z^x(\|V\|,\tau) + \gamma_z^d(\|d\|)
\end{equation}
for $W^0 \leq \veps_1$, $\|V\| \leq \veps_2$, and $\|d\| \leq \veps_3$, proving \eqref{eq:algo-lios1}. That $\gamma_z^x$ can be chosen such that $\gamma_z^x\in \KL$ follows from \cite[Lemma 3.13]{jiang2001input} and $\omega_2\in \KL$. Next, we bound $\Delta u$.

\begin{lmm} \label{lmm:Delta_u_bound}
Let Assumptions~\ref{ass:system_properties} - \ref{ass:algo} hold and define $\Delta u^k = u^{k+1} - u^k$. Then $\exists$ $\sigma_4\in \KL$ and $\sigma_5,\sigma_6 \in \K$ such that
\begin{equation}
|\Delta u^k | \leq \sigma_4(V^k,\tau) + \sigma_5(W^k) + \sigma_6(d^k),
\end{equation}
whenever $V^k = V(x^k,u^k,w^k) \leq \eps_x$, $W^k = W(z^k,w^k) \leq \alpha_2(\frac12\eps_z)$, and $d^k\leq \min(\eps_w,\frac{0.5 \eps_z}{\tau L_z})$ where $L_z$ is the Lipschitz constant of $\bar z$.
\end{lmm}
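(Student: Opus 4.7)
The plan is to bound $|\Delta u^k|$ by first using Lipschitz continuity of the output map $q$ to reduce to an estimate on $|z^{k+1}-z^k|$, and then to decompose this increment using the two Lipschitz branches $\bar z^k := \bar z(w^k)$ and $\bar z^{k+1} := \bar z(w^{k+1})$ of the solution map as way-points. Concretely, I would invoke Assumption~\ref{ass:strong_reg}(i) to write
\begin{equation*}
   |\Delta u^k| \leq L_q\,|z^{k+1}-z^k| \leq L_q\bigl(|z^{k+1}-\bar z^{k+1}|+|\bar z^{k+1}-\bar z^{k}|+|\bar z^{k}-z^{k}|\bigr),
\end{equation*}
so the task reduces to bounding each of the three summands by an appropriate combination of $V^k$, $W^k$, and $d^k$.

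The two easy terms are handled as follows. For the last summand, the lower bound in~\eqref{eq:Wbounds} gives $|\bar z^k-z^k|=|e^k|\leq \alpha_1^{-1}(W^k)$, which already has the structure of a $\K$-function of $W^k$. For the middle summand, Lipschitz continuity of $\bar z$ (Theorem~\ref{thm:sol_traj}) and Lemma~\ref{lmm:Delta_w_bound} yield $|\bar z^{k+1}-\bar z^{k}|\leq L_z\,|w^{k+1}-w^{k}|\leq L_z\tau d^k$.

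The main obstacle is the first summand $|z^{k+1}-\bar z^{k+1}|=|T(z^k,y^{k+1})-\bar z^{k+1}|$, because the algorithm actually reads the plant output $y^{k+1}$ rather than the ideal steady-state value $h(q(z^k),w^{k+1})$. I would overcome this by adding and subtracting $\mathbb T(z^k,w^{k+1})$, then applying Lemma~\ref{lmm:T_Ttilde_bound} to get
\begin{equation*}
   |T(z^k,y^{k+1})-\mathbb T(z^k,w^{k+1})|\leq \sigma_2(V^k,\tau)+\sigma_3(d^k),
\end{equation*}
and using Assumption~\ref{ass:algo}(ii) together with $\bar z^{k+1}\in\fix\,\mathbb T(\cdot,w^{k+1})$ to deduce
\begin{equation*}
   |\mathbb T(z^k,w^{k+1})-\bar z^{k+1}|\leq \alpha_1^{-1}\!\bigl(W(\mathbb T(z^k,w^{k+1}),w^{k+1})\bigr)\leq \alpha_1^{-1}\circ\alpha_2\bigl(|z^k-\bar z^{k+1}|\bigr).
\end{equation*}
The argument on the right is in turn bounded by $\alpha_1^{-1}(W^k)+L_z\tau d^k$ via the same triangle decomposition used above, and the weak triangle inequality~\eqref{eq:weak_triangle_ineq} splits the resulting composition into a pure function of $W^k$ and a pure function of $d^k$.

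Combining these three bounds and regrouping terms by their dependence produces the advertised form
\begin{equation*}
   |\Delta u^k|\leq \sigma_4(V^k,\tau)+\sigma_5(W^k)+\sigma_6(d^k),
\end{equation*}
where $\sigma_4\in\KL$ because it inherits the $\KL$ structure of $\sigma_2$ from Lemma~\ref{lmm:T_Ttilde_bound}, and $\sigma_5,\sigma_6\in\K$ by composition of $\K$-functions. It remains to verify that the restrictions invoked at every step are implied by the hypotheses: the restriction $V^k\leq\veps_x,\,d^k\leq\veps_w$ unlocks Lemma~\ref{lmm:T_Ttilde_bound} directly; the local dissipation inequality of Assumption~\ref{ass:algo} requires $|z^k-\bar z^{k+1}|\leq\veps$, which is ensured by the stated bounds $W^k\leq\alpha_2(\tfrac12\veps_z)$ (so $|e^k|\leq\tfrac12\veps_z$ via the $\alpha_1$--$\alpha_2$ sandwich applied at $\bar z^k$) and $d^k\leq 0.5\veps_z/(\tau L_z)$ (so that $|\bar z^{k+1}-\bar z^k|\leq\tfrac12\veps_z$). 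The technical subtlety will be the careful handling of the $\alpha_1^{-1}\circ\alpha_2$ composition, since in general $\alpha_2$ dominates $\alpha_1$ and so this composition is only guaranteed to be a $\K$-function — not a contraction — which is why only a ``practical'' bound rather than a small-gain estimate is possible at this step.
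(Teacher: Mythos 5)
Your proposal is correct and follows essentially the same route as the paper's proof: Lipschitz continuity of $q$, adding and subtracting $\mathbb{T}(z^k,w^{k+1})$, Lemma~\ref{lmm:T_Ttilde_bound}, the merit-function decrease together with the $\alpha_1$--$\alpha_2$ sandwich, Lipschitz continuity of $\bar z$ with Lemma~\ref{lmm:Delta_w_bound}, the weak triangle inequality, and the same bookkeeping of the restrictions. The only (immaterial) difference is that the paper absorbs the shift from $W(z^k,w^k)$ to $W(z^k,w^{k+1})$ via uniform continuity of $W$ in $w$, whereas you route it through the $\alpha_2$ upper bound at $\bar z^{k+1}$ and the Lipschitz constant $L_z$.
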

\begin{proof}
We adopt the shorthand notation $z^k = z$, $z^{k+1} = z^+$ and so on throughout. We begin by noting that
\begin{align}
|\Delta u| &= |q(T(z,y^+)) - q(z)|\nonumber\\
&\leq L_q |T(z,y^+) - z|\nonumber\\
& = L_q |\mathbb{T}(z,w^+) - z + T(z,y^+) -\mathbb{T}(z,w^+)|\nonumber\\
& \leq L_q|\mathbb{T}(z,w^+) - z| + L_q |T(z,y^+) -\mathbb{T}(z,w^+)| \label{eq:pf2}
\end{align}
where we have added and subtracted $\mathbb{T}$ and used that $q$ is $L_q$-Lipschitz (Assumption~\ref{ass:strong_reg}). The first term in \eqref{eq:pf2} can be bounded as follows,
\begin{align*}
& |\mathbb{T}(z,w^+) - z| = |\mathbb{T}(z,w^+) - \bar z(w^+) - z + \bar z(w^+)|\\
& \leq |\mathbb{T}(z,w^+) - \bar z(w^+)| + |z - \bar z(w^+)|\\
& \leq \alpha_1^{-1}\left(W(\mathbb{T}(z,w^+),w^+)\right) + |z - \bar z(w^+)|
\end{align*}
where we have used \eqref{eq:Wbounds} from Assumption~\ref{ass:algo}. Next we note that since $T$ is converging (Assumption~\ref{ass:algo}) we can use \eqref{eq:Wdot} to conclude that $W(\mathbb{T}(z,w^+),w^+) \leq W(z,w^+)$ and thus whenever $|z-\bar z(w^+)| \leq \eps_z$ (to be shown below)
\begin{align}
|\mathbb{T}(z,w^+) - z| &\leq \alpha_1^{-1}\left(W(\mathbb{T}(z,w^+),w^+)\right) + |z - \bar z(w^+)| \nonumber\\
& \leq \alpha_1^{-1}\left(W(z, w^+)\right) + |z - \bar z(w^+)| \nonumber\\
& \leq \alpha_1^{-1}\left(W(z, w^+)\right) + \alpha_1^{-1}(W(z,w^+)) \nonumber\\
& = 2\alpha_1^{-1} (W(z,w^+)) \nonumber\\
& = 2\alpha_1^{-1} (W(z,w) + W(z,w^+) - W(z,w)) \nonumber\\
& \leq 2\alpha_1^{-1} \left(W(z,w) + \omega_1(|w^+ - w|)\right)\nonumber\\
& \leq 2\alpha_1^{-1} (2W(z,w)) + 2\alpha_1^{-1} \circ 2 \omega_1(|w^+ - w|)\nonumber\\
& = \rho_1(W(z,w)) + \rho_2(|\Delta w|) \label{eq:pf3}
\end{align}
where $\rho_1 = 2\alpha_1^{-1} \circ 2$, $\rho_2 = 2\alpha_1^{-1} \circ 2 \omega_1$, and $\omega_1\in \K$ exists by uniform continuity of $W$ over the compact set $\mc{W}$.

Using \eqref{eq:pf3} and Lemma~\ref{lmm:T_Ttilde_bound} to bound the first and second terms in \eqref{eq:pf2} respectively yields that
\begin{align*}
	|\Delta u| &\leq L_q \rho_1(W) + L_q \rho_2(|\Delta w|) + L_q\sigma_2(V,\tau) + L_q\sigma_3(d).
\end{align*}
Using that $|\Delta w| \leq \tau d$ by Lemma~\ref{lmm:Delta_w_bound} and collecting terms we obtain that
\begin{equation} \label{eq:pf4}
	|\Delta u | \leq \sigma_4(V,\tau) + \sigma_5(W) + \sigma_6(d).
\end{equation}
where $\sigma_4 = L_q \sigma_2$, $\sigma_5 =L_q\rho_1 = 2L_q\alpha_1^{-1} \circ 2$, and $\sigma_6 =L_q (\rho_2 \circ \tau + \sigma_3) = L_q(2\alpha_1^{-1} \circ 2 \omega_1 \circ \tau + \sigma_3)$ as claimed.

Regarding restrictions, \eqref{eq:pf4} holds when $V \leq \eps_x$, $d \leq \eps_w$ and $|z-\bar z(w^+)| \leq \eps_z$. To satisfy the last, we note that
\begin{align*}
|z-\bar z(w^+)| &\leq |z - \bar z(w)| + |\bar z(w^+) - \bar z(w)| \\
&\leq |e| + L_z \tau d \leq \max\{2|e|,2L_z \tau d\}
\end{align*}
and thus a sufficient condition for $|z-\bar z(w^+)|\leq \eps_z$ is $|e| \leq 0.5 \eps_z$, which is implied by $W \leq \alpha_2(0.5 \eps_z)$, and $d\leq \frac{0.5\eps_z}{\tau L_z}$. 
\end{proof}




We are now ready to prove LIOS \eqref{eq:algo-lios2}. Combining \eqref{eq:lios-3} and Lemma~\ref{lmm:Delta_u_bound} we obtain
\begin{multline}
|\Delta u^k| \leq \sigma_5\left( \beta_z(W^0,k) + \gamma_z^x(\|V\|,\tau) + \gamma_z^d(\|d\|)\right) \\
+ \sigma_4(V^k,\tau) + \sigma_6(d^k),
\end{multline}
and, using the weak triangle inequality, 
\begin{multline}
|\Delta u^k| \leq \sigma_5\left( 2\beta_z(W^0,k)\right ) + \sigma_5\left(2\gamma_z^x(\|V\|,\tau)\right) \\ + \sigma_5\left(2\gamma_z^d(\|d\|)\right)
+ \sigma_4(V^k,\tau) + \sigma_6(d^k).
\end{multline}
Finally, collecting terms we see that
\begin{align}
|\Delta u^k| &\leq \sigma_5\left( 2\beta_z(W^0,k)\right ) + (\sigma_4 + \sigma_5 \circ 2\gamma_z^x)(\|V\|,\tau) \nonumber \\ & \qquad \qquad \qquad \quad \quad~+ (\sigma_5 \circ 2 \gamma_z^d + \sigma_6)(\|d\|) \nonumber\\
& = \beta_u(W^0,k) + \gamma_u^x(\|V\|,\tau) + \gamma_u^d(\|d\|)
\end{align}
as claimed, where $\beta_u = \sigma_5 \circ 2 \beta_z$, $\gamma_u^x = \sigma_4 + \sigma_5 \circ 2\gamma_z^x$ and $\gamma_u^d = \sigma_5 \circ 2 \gamma_z^d + \sigma_6$. The restrictions are simply the intersection of the preconditions of \eqref{eq:lios-3} and Lemma~\ref{lmm:Delta_u_bound}, and thus $\bar \veps_1 = \min(0.5\veps_1, \alpha_2(0.5\eps_z))$, $\bar\veps_2 = \min(\eps_x, \veps_x)$, $\bar \veps_3 = \min(\eps_w ,\frac{0.5\eps_z}{\tau L_z}, \veps_3)$.
{\hfill $\blacksquare$}

\subsection{Proof of Theorem~\ref{thm:dt_coupled_iss}}
\label{ap:dt_coupled_iss}

Let $V^k = V(x^k,u^k,w^k)$ and $W^k = W(z^k,w^k)$. Thanks to Theorem~\ref{thm:plant_LISS} and \cite[Lemma 3.8]{jiang2001input} we have that
\begin{equation} \label{eq:thm41}
	\limsupk |V^k| \leq \gamma_x^u(\tau) \limsupk |\Delta u^k| + \gamma_x^d\left(\limsupk |d^k|,\tau\right)
\end{equation}
for $V^0$, $\|\Delta u\|$, and $\|d\|$ sufficiently small. Further, using Theorem~\ref{thm:algo_LIOS} and \cite[Lemma 3.8]{jiang2001input} we have that
\begin{equation} \label{eq:thm42}
 	\limsupk |\Delta u^k| \leq \gamma_u^x\left(\limsupk V^k ,\tau
 	\right)  + \gamma_u^d\left(\limsupk d^k\right),
 \end{equation} 
for $W^0$, $V_0$, and $\|d\|$ sufficiently small. Combining \eqref{eq:thm41} and \eqref{eq:thm42} we obtain that
\begin{equation}
	\limsupk V^k \leq \gamma_x^u(\tau) \gamma_u^x\left(\limsupk V^k ,\tau
 	\right) +  \tilde \gamma\left(\limsupk d^k ,\tau \right)
\end{equation}
where $\tilde \gamma(s,\tau) = \gamma_x^u(\tau) \gamma_u^d(s) + \gamma_x^d(s,\tau)$. The combined restrictions on $V^k$ from Theorems~\ref{thm:plant_LISS} and \ref{thm:algo_LIOS} are $\sup_{k\geq 0} V^k \leq \tilde\eps = \min\{0.25 \eps_x, \bar \veps_2\}$. Thus by the small-gain theorem \cite[Theorem 1]{jiang2004nonlinear}, \cite{karafyllis2007small}, the coupled system is LISS if the condition $\gamma_x^u(\tau) \gamma_u^x(s,\tau) < s$ for all $s\in[0,\tilde \eps]$ holds.
{\hfill $\blacksquare$}

\subsection{Proof of Theorem~\ref{thm:main_theorem}}
\label{ap:main_theorem}
We begin with the following preparatory Lemma. The idea is to prove LISS with respect to a virtual disturbance representing the problematic super-linear part of the gain $\gamma_u^x$.

\begin{lmm} \label{cor:affine_lios}
Given Assumptions~\ref{ass:system_properties} - \ref{ass:algo}, the system \eqref{eq:sys2e} is locally input-output practically stable, i.e., for all $W^0 \leq \bar \veps_1$, $\|V\| \leq \bar \veps_2$, and $\|d\| \leq \bar \veps_3$
\begin{equation} \label{eq:LIOS_affine}
	|\Delta u^k| \leq \beta_u(W^k,k) + \kappa(\tau) \|V\| + \gamma_u^d(\|d\|) + a(\tau)
\end{equation}
where $\kappa(\tau) = \gamma_u^x(\bar \veps_2,\tau)/\bar\veps_2$, $a(\tau) = \max_{s\in [0,\bar \veps_2]}~\gamma_u^x(s,\tau) - \kappa s$, and $\gamma_u^x,\gamma_u^d$, $\bar \veps_i,~ i \in \{1,2,3\}$ are defined in Theorem~\ref{thm:algo_LIOS}. Moreover, $\kappa\in \L$, $a(\tau) \geq 0$, and $a(\tau) \to 0$ as $\tau \to \infty$.
\end{lmm}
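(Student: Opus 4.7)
The plan is to apply Theorem~\ref{thm:algo_LIOS} directly and then replace the nonlinear gain $\gamma_u^x(\|V\|,\tau)$ by an affine (in $\|V\|$) upper bound of the form $\kappa(\tau)\|V\| + a(\tau)$, valid on the restriction interval $\|V\| \leq \bar\veps_2$. The bound \eqref{eq:LIOS_affine} then follows by substitution, and the preconditions $W^0\leq\bar\veps_1$, $\|V\|\leq\bar\veps_2$, $\|d\|\leq\bar\veps_3$ carry over verbatim from Theorem~\ref{thm:algo_LIOS}.

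The main step is the affine majorization of $s\mapsto \gamma_u^x(s,\tau)$ on $[0,\bar\veps_2]$. With $\kappa(\tau) = \gamma_u^x(\bar\veps_2,\tau)/\bar\veps_2$ being the slope of the secant line from the origin to $(\bar\veps_2,\gamma_u^x(\bar\veps_2,\tau))$, the definition $a(\tau) = \max_{s\in[0,\bar\veps_2]}\bigl(\gamma_u^x(s,\tau) - \kappa(\tau)s\bigr)$ gives, tautologically, the pointwise bound $\gamma_u^x(s,\tau) \leq \kappa(\tau)s + a(\tau)$ for all $s\in[0,\bar\veps_2]$. Instantiating this at $s = \|V\|$ and inserting into Theorem~\ref{thm:algo_LIOS} yields \eqref{eq:LIOS_affine}.

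It remains to verify the three structural properties of $\kappa$ and $a$. Nonnegativity of $a(\tau)$ is immediate because the expression inside the $\max$ vanishes at $s=0$ and at $s=\bar\veps_2$. For $\kappa\in\mathcal L$, I would note that, since $\gamma_u^x\in\mathcal{KL}$, the section $\tau\mapsto \gamma_u^x(\bar\veps_2,\tau)$ is of class $\mathcal L$, and dividing by the positive constant $\bar\veps_2$ preserves this property. For $a(\tau)\to 0$, which is the only step requiring more than a definitional check, I would exploit the monotonicity of $\gamma_u^x(\cdot,\tau)$ (class $\mathcal K$ in its first argument): for every $s\in[0,\bar\veps_2]$, one has $\gamma_u^x(s,\tau) - \kappa(\tau)s \leq \gamma_u^x(s,\tau) \leq \gamma_u^x(\bar\veps_2,\tau)$, whence $a(\tau)\leq \gamma_u^x(\bar\veps_2,\tau)$, and the $\mathcal L$-property in the second argument forces the right-hand side to vanish as $\tau\to\infty$.

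The only mildly delicate point is ensuring that the maximum in the definition of $a(\tau)$ is attained (so that $a$ is well-defined and finite for each $\tau>0$); this follows from continuity of $\gamma_u^x(\cdot,\tau)$ on the compact interval $[0,\bar\veps_2]$, which is implicit in the class-$\mathcal{KL}$ assumption. No further iteration of the Lyapunov analysis of Theorem~\ref{thm:algo_LIOS} is needed: the statement is essentially a convenient re-expression of its conclusion in an affine-plus-practical-offset form, tailored to make the small-gain argument in Theorem~\ref{thm:main_theorem} tractable even when $\gamma_u^x$ is superlinear near the origin.
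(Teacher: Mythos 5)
Your proposal is correct and follows essentially the same route as the paper's proof: invoke Theorem~\ref{thm:algo_LIOS}, majorize $\gamma_u^x(\cdot,\tau)$ on $[0,\bar\veps_2]$ by the affine function $s\mapsto\kappa(\tau)s+a(\tau)$ (which holds tautologically from the definition of $a$), and then check the structural properties of $\kappa$ and $a$. Your justification that $a(\tau)\to 0$ via the squeeze $0\leq a(\tau)\leq\gamma_u^x(\bar\veps_2,\tau)$ is in fact slightly cleaner than the paper's, which interchanges the limit and the maximum without comment.
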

\begin{proof}
By Theorem~\ref{thm:algo_LIOS}, we have that
\begin{equation} \label{eq:LIOS_1}
	\|\Delta u^k\| \leq \beta_u(W^k,k) + \gamma_u^x(\|V\|,\tau) + \gamma_u^d(\|d\|)
\end{equation}
for sufficiently small $W^0, \|V\|$, and $\|d\|$. We proceed by bounding $\gamma_u^x$. By construction, for any $\tau > 0$, the affine function $\eta(s,\tau) = \kappa(\tau) s + a(\tau)$ satisfies $\eta(s,\tau) \geq \gamma_u^x(s,\tau)$ for all $s\in [0,\bar\veps_2]$. Substituting this bound into \eqref{eq:LIOS_1} yields \eqref{eq:LIOS_affine}.
The remaining claims follow from $\displaystyle \lim_{\tau \to \infty} a(\tau) = \max_{[0,\bar\veps_2]} \lim_{\tau\to\infty} \gamma_u^x(s,\tau) - \gamma_u^x(\bar \veps_2,\tau)s/\bar\veps_2 = 0$ since $\gamma_u^x\in \KL$.
\end{proof}


Let $V^k = V(x^k,u^k,w^k)$ and $W^k = W(z^k,w^k)$. Following the same steps as in the proof of Theorem~\ref{thm:dt_coupled_iss} but using Lemma~\ref{cor:affine_lios} in place of Theorem~\ref{thm:algo_LIOS} we obtain that
\begin{equation*}
	\limsupk V^k \leq \gamma_x^u(\tau) \kappa(\tau) ~\limsupk V^k + \tilde \gamma\left(\limsupk d^k ,\tau \right) + \gamma_x^u(\tau) a(\tau),
\end{equation*}
where $\tilde \gamma(s,\tau) = \gamma_x^u(\tau) \gamma_u^d(s) + \gamma_x^d(s,\tau)$ and thus the discrete-time system \eqref{eq:discrete-time-error-system} is LISS with respect to the disturbance inputs $\|d\|$ and $a(\tau)$ if the small-gain condition
\begin{equation} \label{eq:small_gain_simple}
	\gamma_x^u(\tau) \kappa(\tau) < 1
\end{equation}
holds in a neighbourhood of the origin \cite[Theorem 1]{jiang2004nonlinear}, \cite{karafyllis2007small}. Since both $\gamma_x^u \in \L$ and $\kappa\in \L$ (and thus go to zero monotonically as $\tau \to \infty$) it follows that there exists $\bar \tau \in (0,\infty)$ such that \eqref{eq:small_gain_simple} is satisfied for all $\tau > \bar \tau$. 

To show LISS of the sampled-data system \eqref{eq:sampled-data-system} we will invoke \cite[Theorem 5]{nevsic1999formulas}. To use this theorem we need to show that (i) \eqref{eq:sampled-data-system} is uniformly bounded over $\tau$ \cite[Definition 2]{nevsic1999formulas}, \blue{i.e., its solutions $(x(t), z(t))$ satisfies $|(\delta x(t),e(t))| \leq \zeta_1(|(\delta x(t^k),e(t^k)|) + \zeta_2(\|d\|_\infty)$ for some $\zeta_1,\zeta_2 \in \K$ over every interval $t\in [t^k~~t^{k+1})$} and (ii) that the discrete-time system \eqref{eq:discrete-time-error-system} is LISS. Condition (i) clearly holds, \blue{$x$ and thus $\delta x$ is bounded over every interval due to \eqref{eq:boundedness}, moreover $z$ is piecewise constant (and thus bounded) and $\mc{W}$ is compact and thus $e(t) = z(t) - \bar z(w(t))$ is also bounded.} As proven above, discrete-time LISS with respect to $d$ and $a(\tau)$ holds for $\tau > \bar \tau$ and thus (ii) holds and \eqref{eq:sampled-data-system} is LISS in continuous-time for sufficiently large $\tau$, i.e., there exists $\beta\in \KL$, $\hat\gamma, \gamma\in \K$ such that $\forall t\geq 0$
\begin{equation}
	\left|\begin{bmatrix}
		\delta x(t) \\ e(t)
	\end{bmatrix} \right| \leq \beta\left( \left|\begin{bmatrix}
		\delta x(0) \\ e(0)
	\end{bmatrix} \right|,t\right) + \gamma(\|d\|) + \hat\gamma(a(\tau)).
\end{equation}
We let $b = \hat\gamma \circ a$ and note that, by definition, $\|d\| = \|\dot w\|$ so we can treat $\dot{w}$ as the disturbance input. Finally, $b(\tau) = \hat \gamma(a(\tau)) \to0$ as $\tau\to\infty$ by Lemma~\ref{cor:affine_lios}.
{\hfill $\blacksquare$}

\bibliography{feedback-eq}
\end{document}